\renewcommand{\myyear}{2022}
\renewcommand{\today}{}
\begin{document}

\volume{Volume x, Issue x, \myyear\today}
\title{Shapley effect estimation in reliability-oriented sensitivity analysis with correlated inputs by importance sampling}
\titlehead{Reliability-oriented Shapley effect estimation}
\authorhead{J. Demange-Chryst, F. Bachoc \& J. Morio}
%For at least  authors with different addresses, use instead the following commands
\corrauthor[1,2]{Julien Demange-Chryst}
\author[1]{François Bachoc}
\author[2]{Jérôme Morio}
\corremail{julien.demange-chryst@onera.fr}
%\corraddress{Business or Academic Affiliation 1, City, State, Zip Code}
\address[1]{Institut de Mathématiques de Toulouse, UMR5219 CNRS, 31062 Toulouse, France}
\address[2]{ONERA/DTIS, Université de Toulouse, F-31055 Toulouse, France}
% End information for at least  authors with different addresses
% For authors with the same post address,
%\corrauthor{First A. Author}
%\corremail{f.author@affiliation.com}
%\author{Second B. Author, Jr.}
%\address{Department of Chemistry and Courant, Institute of Mathematical Sciences, New York, NY 10012, USA}
% End commands for all authors with the same address

\dataO{03/17/2022}
%\dataO{}
\dataF{mm/dd/yyyy}
%\dataF{}

\abstract{Reliability-oriented sensitivity analysis aims at combining both reliability and sensitivity analyses by quantifying the influence of each input variable of a numerical model on a quantity of interest related to its failure. In particular, target sensitivity analysis focuses on the occurrence of the failure, and more precisely aims to determine which inputs are more likely to lead to the failure of the system. The Shapley effects are quantitative global sensitivity indices which are able to deal with correlated input variables. They have been recently adapted to the target sensitivity analysis framework. In this article, we investigate two importance-sampling-based estimation schemes of these indices which are more efficient than the existing ones when the failure probability is small. Moreover, an extension to the case where only an i.i.d. input/output $N$-sample distributed according to the importance sampling auxiliary distribution is proposed. This extension allows to estimate the Shapley effects only with a data set distributed according to the importance sampling auxiliary distribution stemming from a reliability analysis without additional calls to the numerical model. In addition, we study theoretically the absence of bias of some estimators as well as the benefit of importance sampling. We also provide numerical guidelines and finally, realistic test cases show the practical interest of the proposed methods.}

\keywords{Reliability-oriented sensitivity analysis, Target sensitivity analysis, Rare event estimation, Dependent inputs, Shapley effects, Importance sampling, Nearest-neighbour approximation.}

\maketitle

\section{Introduction}
More and more physical phenomenons and complex systems are numerically represented by black-box numerical models, which are often computationally expensive to evaluate, and whose complexity makes it impossible to study analytically. For safety and certification purposes, tracking the potential failures of a system is crucial, but it is not an option to do so experimentally with critical systems because it could lead to dramatic environmental, human or financial consequences. Numerical models enable to simulate the behaviour of a system far from nominal configurations.

The \textit{reliability analysis} of a numerical model mainly consists in the estimation of its failure probability. The failure is often a rare event and thus has a small probability. The high computational cost of one evaluation of the numerical model (several minutes to several days CPU) and the low value of the probability make the usual quadrature methods \cite{davis2007methods} and Monte Carlo sampling \cite{rubinstein2016simulation} inappropriate to handle this problem, but various techniques reviewed in \cite{morio2015estimation} have been developed to estimate more precisely a such probability at a limited computational cost, including importance sampling \cite{bucklew2004introduction} for example.

\textit{Global sensitivity analysis} (GSA) aims at studying the impact of the input variables of a numerical code on the behaviour of its output to provide a better understanding of the model. It can be carried out for various purposes such as fixing non influential input variables to nominal values or identifying the most influential ones to decrease their variability, see for example \cite{saltelli2004sensitivity}. A deeper analysis of the failure of the system consists then in combining both reliability and sensitivity analyses by performing a GSA on a quantity of interest (QoI) characterizing the failure of the system. This new specific framework is called \textit{reliability-oriented sensitivity analysis} (ROSA) and can be divided into two categories \cite{MARREL2021107711}: \begin{itemize}
    \item \textit{target sensitivity analysis} (TSA) aims at determining the influence of each input variable on the occurrence of the failure of the system
    \item \textit{conditional sensitivity analysis} (CSA) aims at performing a global sensitivity analysis of the numerical code restricted to the failure domain.
\end{itemize} In the present article, we only focus on TSA. The well-known Sobol indices for variance-based GSA \cite{sobol1993sensitivity} have been adapted to TSA and first specific estimation schemes have been introduced in \cite{wei2012efficient,perrin2019efficient}. However, as in GSA, the interpretability of these indices requires the strong assumption of independent input variables. Several approaches have been investigated in GSA to adapt the Sobol indices to the case where the inputs are correlated \cite{chastaing2012generalized} but recently, new GSA indices based on game theory \cite{shapley1953value} and which are more naturally able to deal with correlated inputs have been introduced: the \textit{Shapley effects} \cite{owen2014sobol}. Their adaptation to the TSA framework is recent \cite{ILIDRISSI2021105115} and the authors proposed first estimation schemes based on a classical Monte Carlo sampling according to the input distribution.

As illustrated in our numerical simulations in \Cref{sec:numerical_experiments}, the existing estimators of the Shapley effects for TSA based on a Monte Carlo sampling from \cite{ILIDRISSI2021105115} are not efficient when the failure probability is small because they require too many calls to the numerical code to be accurate. In this article, we introduce then new importance-sampling-based estimators of these indices which are able to deal more efficiently with a small failure probability. Moreover, we extend these new estimators to the case where only an i.i.d. input/output $N$-sample distributed according to the importance sampling auxiliary distribution is available, using the nearest neighbour approximation described in \cite{broto2020variance}. A major practical advantage is that our extended estimators enable to estimate efficiently the Shapley effects for TSA without additional calls to the function after the estimation of the failure probability by importance sampling. In addition, under the condition that the reliability analysis has been done efficiently, we show theoretically that the proposed estimators improve the estimation of the Shapley effects for TSA compared to the existing ones when the failure probability is getting smaller and finally, we give some numerical guidelines. 

The remainder of this paper is organized as follows. First, \Cref{sec:review} consists in a review on variance-based global sensitivity analysis, importance sampling and reliability-oriented sensitivity analysis. Then, \Cref{sec:contribution} introduces and describes the proposed importance-sampling-based estimators of the Shapley effects for TSA. In addition, \Cref{sec:numerical_experiments} illustrates the practical interest of the new estimators on numerical examples: the Gaussian linear case, a cantilever beam problem and a fire spread model. Finally, \Cref{sec:conclusion} concludes the present article and gives future research perspectives stemming from this work. 

\section{A review on global sensitivity indices: definitions, estimation schemes and adaptation to reliability}
\label{sec:review}

In this section, we recall the main principle of variance-based GSA and we describe very common existing sensitivity indices as well as some of their estimation schemes proposed in the literature. Next, after a brief reminder of importance sampling, we also review some tools from ROSA.

First of all, let us begin by introducing the notations that will be used throughout the paper. We let $\mathbf{X} = \left(X_1,\dots,X_d\right)$ be the input random vector on the input domain $\mathbb{X} = \bigotimes_{i=1}^d \mathbb{X}_i \subseteq \mathbb{R}^d$ with joint PDF $f_{\mathbf{X}}$. Then, the black-box function is defined by: \begin{equation}
    \begin{array}{l|rcl}
\phi : & \mathbb{X} & \longrightarrow & \mathbb{R} \\
    & \mathbf{x} & \longmapsto & y = \phi\left(\mathbf{x}\right).
    \end{array} 
\end{equation} 
No regularity hypothesis on $\phi$ is required but the random output $Y = \phi\left(\mathbf{X}\right) \in \mathbb{R}$ is supposed to be square integrable, i.e. $\mathbb{E}\left(Y^2\right) < + \infty$. Moreover, we let $\mathcal{P}(d) = \lbrace u \subseteq [\![1,d]\!] \rbrace$ denote all the subsets of $[\![1,d]\!] = \lbrace 1,\dots,d\rbrace$. Then, for any $u \in \mathcal{P}(d)$, let us write $-u = [\![1,d]\!] \backslash u$ for the complementary of the set $u$. In particular, for all $i \in [\![1,d]\!]$, $-i$ refers to the subset $[\![1,d]\!] \backslash \lbrace i \rbrace$. In addition, for any non-empty subset $u \in \mathcal{P}(d)$, letting $u = \lbrace i_1,\dots,i_r\rbrace$ with $i_1<\dots<i_r$, let $\mathbb{X}_u = \bigotimes_{j=1}^r \mathbb{X}_{i_j} \subseteq \mathbb{R}^{r}$ be the input domain of the random sub-vector $\mathbf{X}_u = \left(X_{i_1},\dots,X_{i_r} \right) = \left(X_i\right)_{i\in u}$. Furthermore, for $u \in \mathcal{P}(d)\backslash\lbrace \varnothing, [\![1,d]\!]\rbrace$, for any $\mathbf{x}_u \in \mathbb{X}_u$ and for any $\mathbf{x}_{-u} \in \mathbb{X}_{-u}$, $\left(\mathbf{x}_u,\mathbf{x}_{-u}\right)$ represents the vector $\tilde{\mathbf{x}} \in \mathbb{X}$ such that $\tilde{\mathbf{x}}_u = \mathbf{x}_u$ and $\tilde{\mathbf{x}}_{-u} = \mathbf{x}_{-u}$. We also write $\phi\left(\mathbf{x}_u,\mathbf{x}_{-u}\right) = \phi\left(\tilde{\mathbf{x}}\right)$. Finally, for any probability density $g : \mathbb{X}\longrightarrow\mathbb{R}_+$, we let $\mathbb{E}_g$ and $\mathbb{V}_g$ denote respectively the expectation and the variance operators of a random variable distributed according to the law of PDF $g$. When there is no ambiguity, we may also write $\mathbb{E}$ and $\mathbb{V}$ for $\mathbb{E}_{f_{\mathbf{X}}}$ and $\mathbb{V}_{f_{\mathbf{X}}}$.

%and for $\mathbf{x} \in \mathbb{R}^d$, we define $\mathbf{x}_u = \left(x_{i_1},\dots,x_{i_r} \right) = \left(x_i\right)_{i\in u} \in \mathbb{R}^u$.

\subsection{From Sobol indices to Shapley effects}
\label{subsec:2.1}

The Hoeffding functional decomposition \cite{10.1214/aoms/1177730196} allows to represent a function defined on any subset of $\mathbb{R}^d$ as a sum of elementary functions. When considering an input measure with independent components, this decomposition is unique under orthogonality conditions stated by \cite{sobol1993sensitivity}. Then, in the sensitivity analysis framework with independent inputs in $\mathbf{X}$ and a square integrable random output on the form $Y = \phi\left(\mathbf{X}\right)$, this decomposition leads to a unique functional decomposition of the variance of $Y$, also called ANOVA (ANalysis Of VAriance):\begin{equation}\label{anova}\mathbb{V}\left(Y\right) = \mathbb{V}\left(\phi(\mathbf{X})\right) = \sum_{u \in \mathcal{P}(d)\backslash\lbrace\varnothing\rbrace} \mathbb{V}\left(\phi_u(\mathbf{X}_u)\right),\end{equation} where for $u \in \mathcal{P}(d)\backslash\lbrace\varnothing\rbrace, \ \phi_u(\mathbf{X}_u) = \mathbb{E}\left(\phi(\mathbf{X}) | \mathbf{X}_u\right) + \sum_{v\subsetneq u} (-1)^{|u| - |v|} \mathbb{E}\left(\phi(\mathbf{X}) | \mathbf{X}_v\right)$. Then, the well-known \textit{first-order Sobol indices} \cite{sobol1993sensitivity} for GSA are obtained for all $i \in [\![1,d]\!]$ by: \begin{equation}\label{sobol}
    S_i = \dfrac{\mathbb{V}\left[\mathbb{E}\left(\phi\left(\mathbf{X}\right)|X_i\right)\right]}{\mathbb{V}\left(\phi\left(\mathbf{X}\right)\right)} \in [0,1].
\end{equation}
The index $S_i$ quantifies the part of variance of the output explained only by the input $X_i$. From \eqref{anova}, it is also possible to define higher-order Sobol indices which take into account the interactions between the input variables in $\phi$ but their number increases exponentially with the dimension $d$ and evaluating all of them becomes impossible. Thus, instead of computing higher-order indices, one typically prefers to consider another family of indices, the \textit{total-order Sobol indices} introduced by \cite{homma1996importance}, %and defined for all $i \in [\![1,d]\!]$ by: \begin{equation}
    %S_{T_i} = 1 - \dfrac{\mathbb{V}\left[\mathbb{E}\left(\phi\left(\mathbf{X}\right)|\mathbf{X}_{-i}\right)\right]}{\mathbb{V}\left(\phi\left(\mathbf{X}\right)\right)} = \dfrac{\mathbb{E}\left[\mathbb{V}\left(\phi\left(\mathbf{X}\right)|\mathbf{X}_{-i}\right)\right]}{\mathbb{V}\left(\phi\left(\mathbf{X}\right)\right)} \in [0,1].
%\end{equation}
where each of them quantifies the part of variance of the output explained by the input $X_i$ in interaction with any other group of variables. In practice, when $d$ is large, the first-order and the total-order indices give satisfying information for the sensitivity analysis of the model.

When the inputs are correlated, the Hoeffding decomposition is no longer unique and even if it is still possible to define and compute Sobol indices, they don't allow to clearly identify the origin of the variability of the output anymore. To address this issue, by an analogy between game theory \cite{shapley1953value} and GSA, the author of \cite{owen2014sobol} introduced new variance-based sensitivity indices which are able to deal with correlated inputs called \textit{Shapley effects} or \textit{Shapley values}, defined for all $i \in [\![1,d]\!]$ by: \begin{equation}\label{shapley}
    \text{Sh}_i = \dfrac{1}{\mathbb{V}\left(\phi\left(\mathbf{X}\right)\right)}\dfrac{1}{d}\sum_{u\subseteq -i} \binom{d-1}{|u|}^{-1} \left(c(u\cup\lbrace i \rbrace) - c(u) \right),
\end{equation} with $c : \mathcal{P}(d) \longrightarrow \mathbb{R}$ a cost function which is specific to how input influence is measured. The $d$ input variables are interpreted as players (from the game theory framework from \cite{shapley1953value}) and the author of \cite{owen2014sobol} proposed to use as cost function the unormalized \textit{closed Sobol indices}, that are defined for all $u \in \mathcal{P}(d)$ by: \begin{equation}\label{closed_sobol}
    \text{VE}_u = \mathbb{V}\left[\mathbb{E}\left(\phi\left(\mathbf{X}\right)|\mathbf{X}_u\right)\right].
\end{equation}
The increment $\left(\text{VE}_{u\cup\lbrace i \rbrace} - \text{VE}_u\right)$ quantifies the individual contribution of the variable $X_i$ to the variance of the output in relation with the group of variables $u \subseteq -i$ taking into account both interaction and dependence. Moreover, the Shapley values using the alternative cost function $\text{EV}_u = \mathbb{E}\left[\mathbb{V}\left(\phi\left(\mathbf{X}\right)|\mathbf{X}_{-u}\right)\right]$ are equal to the ones using $\text{VE}_u$ \cite{song2016shapley}, which thus provides an alternative way to compute them. We call $\text{VE}_u$ and $\text{EV}_u$ the \textit{conditional indices}. Practical interest and theoretical properties of Shapley values for GSA have been widely studied since their introduction \cite{song2016shapley,owen2017shapley,iooss2019shapley}. Two important properties allow for an easy interpretation of these values: they are all non negative and sum to one. Thus, they give a quantitative measure, as a percentage, of the influence of each input on the variability of the output taking into account both interaction and dependence between input variables.

\begin{remark}\label{rem:val_closed_sobol}
Remark that $\text{VE}_\varnothing = \text{EV}_{[\![1,d]\!]} = 0$ and that $\text{VE}_{[\![1,d]\!]} = \text{EV}_\varnothing = \mathbb{V}\left(\phi\left(\mathbf{X}\right)\right)$. Thus, during the estimation process of the Shapley effects described in the remaining of the article, it will not be necessary to estimate the conditional indices for $u\in\lbrace\varnothing,[\![1,d]\!]\rbrace$. 
\end{remark}

\subsection{Shapley effect estimation schemes}

Obtaining an accurate estimation of the Shapley effects at a reasonable cost is very challenging and is an active research topic. In the context of game theory, the authors of \cite{castro2009polynomial} presented a first algorithm to estimate the Shapley effects which was improved by \cite{song2016shapley} in sensitivity analysis by reducing the number of calls to the function $\phi$. New approaches \cite{broto2020variance,plischke2021computing} and surrogate-model-based strategies \cite{iooss2019shapley,benoumechiara2019shapley,benard2022shaff} were explored to reduce even more the estimation cost of these indices while the articles \cite{broto2019block,broto2019sensitivity} were focused on the estimation of the Shapley effects with independent groups of variable.

The estimation schemes of the Shapley effects considered in this paper can be divided into two parts: \begin{enumerate}
    \item estimation of the conditional indices $\text{VE}_u$ or $\text{EV}_u$ for some subsets $u \in \mathcal{P}(d)\backslash\lbrace \varnothing, [\![1,d]\!]\rbrace$
    \item an aggregation procedure which consists in computing all the $\text{Sh}_i$ using the previous estimations of the conditional indices.
\end{enumerate}

In the following, two sampling methods from the literature are presented for the estimation of the conditional indices, with for each of them, an extension to the case where only an i.i.d. sample distributed according to the input distribution and its corresponding output is available. Afterwards, we also present two aggregation procedures.

\subsubsection{Estimation of $\text{EV}_u$ by double Monte Carlo}
\label{sss:2.2.1}

In this sub-subsection and the following one, for any $u \in \mathcal{P}(d)\backslash\lbrace \varnothing, [\![1,d]\!]\rbrace$, assume that: \begin{itemize}
    \item we can evaluate the code $\phi$ in any point of $\mathbb{X}$
    \item it is possible to generate an i.i.d. sample from the distribution of $\mathbf{X}_u$
    \item for any $\mathbf{x}_u \in \mathbb{X}_u$, it is possible to generate an i.i.d. sample from the distribution of $\mathbf{X}_{-u}|\mathbf{X}_u = \mathbf{x}_u$.
\end{itemize}

These assumptions define the \textit{given-model} framework. The two different cost functions $\text{VE}_u$ and $\text{EV}_u$ provide the same Shapley values, as mentioned above. However, the authors of \cite{sun2011efficient} pointed out a natural double (or two-level) Monte Carlo estimator of $\text{VE}_u$ but remarked that it is biased, whereas they suggested a natural double Monte Carlo estimator of $\text{EV}_u$ which is unbiased. Hence, the authors of \cite{song2016shapley} chose to estimate $\text{EV}_u$ instead of $\text{VE}_u$ and then suggested the following double Monte Carlo estimator:\begin{equation}\label{dmc}
    \widehat{\text{EV}}_{u,\text{MC}} = \dfrac{1}{N_u} \sum_{n=1}^{N_u} \dfrac{1}{N_I-1}\sum_{i=1}^{N_I} \left(\phi(\mathbf{X}_u^{(n,i)},\mathbf{X}_{-u}^{(n)}) - \overline{\phi(\mathbf{X}_{-u}^{(n)})} \right)^2,
\end{equation}
where $(\mathbf{X}_{-u}^{(n)})_{n \in [\![1,N_u]\!]}$ is an i.i.d. sample from the distribution of $\mathbf{X}_{-u}$, where for all $n \in [\![1,N_u]\!]$, $(\mathbf{X}_{u}^{(n,i)})_{i \in [\![1,N_I]\!]}$ is an i.i.d. sample from the distribution of $\mathbf{X}_u|\mathbf{X}_{-u} = \mathbf{X}_{-u}^{(n)}$ and where $\overline{\phi(\mathbf{X}_{-u}^{(n)})} = N_I^{-1}\sum\limits_{j=1}^{N_I} \phi(\mathbf{X}_u^{(n,j)},\mathbf{X}_{-u}^{(n)})$. This estimator is composed of an inner loop of size $N_I$ for the conditional variance, and of an outer loop of size $N_u$ (which depends on $u$) for the expectation. It requires $N_uN_I$ calls to $\phi$ and it is unbiased.

\subsubsection{Estimation of $\text{VE}_u$ by Pick-Freeze}

The basics of the \textit{Pick-Freeze} method were introduced in \cite{sobol1993sensitivity,homma1996importance}. When the components of $\mathbf{X}$ are independent, it is possible to remove the expensive double loop in \eqref{dmc} by rewriting the conditional indices $\text{VE}_u$ as a single expectation, with the interpretation of picking and freezing some input variables \cite{sobol1993sensitivity}. Recently, the Pick-Freeze method was generalized in \cite{broto2020variance} to the case where the inputs are correlated. The idea is to introduce a second random variable $\mathbf{X}^{u} = \left(\mathbf{X}_u,\mathbf{X}_{-u}'\right)$ with $\mathbf{X}_{-u}' \overset{d}{=}\mathbf{X}_{-u} | \mathbf{X}_u$ and $\mathbf{X}_{-u}'\perp\!\!\!\perp\mathbf{X}_{-u} | \mathbf{X}_u$, where $\perp\!\!\!\perp$ is the independence symbol, and to write:\begin{equation}\label{PF}\text{VE}_u = \mathbb{V}\left[\mathbb{E}\left(\phi(\mathbf{X}) | \mathbf{X}_u\right)\right] = \mathbb{E}\left[\phi(\mathbf{X})\phi(\mathbf{X}^u)\right] - \mathbb{E}\left[\phi(\mathbf{X})\right]^2.\end{equation} The random variables $\mathbf{X}$ and $\mathbf{X}^u$ are correlated and have the same distribution. To obtain $\mathbf{X}^u$ from $\mathbf{X}$, the component according to $u$ is frozen and the component according to $-u$ is chosen independently conditionally to $\mathbf{X}_u$.
In order to estimate the conditional index based on \eqref{PF}, let first $\widehat{E}_{\phi,N}$ be the natural Monte Carlo estimator of $\mathbb{E}\left[\phi(\mathbf{X})\right]$ with a sample of size $N$ from the distribution of $\mathbf{X}$. The following estimator of $\text{VE}_u$ was then suggested by \cite{broto2020variance}: \begin{equation}\label{pf}
    \widehat{\text{VE}}_{u,\text{PF}} = \frac{1}{N_u}\sum_{n=1}^{N_u}\phi(\mathbf{X}_u^{(n)},\mathbf{X}_{-u}^{(n,1)})\phi(\mathbf{X}_u^{(n)},\mathbf{X}_{-u}^{(n,2)}) - \left(\widehat{E}_{\phi,N}\right)^2,
\end{equation} where $ (\mathbf{X}_{u}^{(n)})_{n \in [\![1,N_u]\!]}$ is an i.i.d. sample from the distribution of $\mathbf{X}_u$ and where for all $n \in [\![1,N_u]\!]$, $(\mathbf{X}_{-u}^{(n,i)})_{i \in [\![1,2]\!]}$ are two independent random variables from the distribution of  $\mathbf{X}_{-u} | \mathbf{X}_{u} = \mathbf{X}_{u}^{(n)}$. The inner loop of size $N_I$ of the Monte Carlo estimator \eqref{dmc} is replaced by the product $\phi(\mathbf{X}_u^{(n)},\mathbf{X}_{-u}^{(n,1)})\phi(\mathbf{X}_u^{(n)},\mathbf{X}_{-u}^{(n,2)})$. This estimator requires $2N_u$ calls to $\phi$ and is unbiased.

\subsubsection{Extension when only an i.i.d. sample is available}
\label{sss:2.2.3}

In this section, assume that the code $\phi$ is no longer available and that only an i.i.d. sample $\left(\mathbf{X}^{(n)},\phi\left(\mathbf{X}^{(n)}\right)\right)_{n\in[\![1,N]\!]}$ with $\left(\mathbf{X}^{(n)}\right)_{n\in[\![1,N]\!]}$ from the distribution of $\mathbf{X}$ is available. This is the \textit{given-data} framework as defined in \cite{broto2020variance}. The estimation of sensitivity indices in this framework was first explored in \cite{da2013efficient} but in restrictive cases, for example when $|u| = 1$.

The authors of \cite{broto2020variance} extended the previous estimators \eqref{dmc} and \eqref{pf} to the given-data framework. The difficult point is that exact sampling from the conditional distributions $\mathbf{X}_u | \mathbf{X}_{-u} = \mathbf{x}_{-u}$ for some $\mathbf{x}_{-u} \in \mathbb{X}_{-u}$ is no longer possible. The \textit{nearest-neighbours approximation}, which is fully described in \cite{broto2020variance}, allows to approximate these distributions with the available i.i.d. sample. To that end, for $v \in \mathcal{P}(d)\backslash\lbrace\varnothing,[\![1,d]\!]\rbrace$ and $(l,i) \in [\![1,N]\!]^2$, let us write $k_N^v(l,i) \in [\![1,N]\!]$ for the index of the $i$-th nearest neighbour of the point $\mathbf{X}_v^{(l)}$ in the subspace $\mathbb{X}_v$ (according to the Euclidean distance) among $\left(\mathbf{X}_v^{(n)}\right)_{n \in [\![1,N]\!]}$. Moreover, $\left(k_N^v(l,i)\right)_{i \in [\![1,N]\!]}$ are defined to be two by two distinct: if several points are at equal distance from $\mathbf{X}_v^{(l)}$ for some $l \in [\![1,N]\!]$, ties are broken arbitrarily. 

Finally, the extended estimators of the conditional indices are given by: \begin{equation}\label{MC_KNN}
    \widehat{\text{EV}}_{u,\text{MC}}^{\text{KNN}} = \frac{1}{N_u}\sum_{n=1}^{N_u} \frac{1}{N_I-1}\sum_{i=1}^{N_I} \left[\phi\left(\mathbf{X}^{\left(k_N^{-u}(s(n),i)\right)}\right) - \frac{1}{N_I}\sum_{j=1}^{N_I}\phi\left(\mathbf{X}^{\left(k_N^{-u}(s(n),j)\right)}\right) \right]^2,
\end{equation} and 
\begin{equation}\label{PF_KNN}
     \widehat{\text{VE}}_{u,\text{PF}}^{\text{KNN}} = \frac{1}{N_u}\sum_{n=1}^{N_u}\phi\left(\mathbf{X}^{\left(k_N^{u}(s(n),1)\right)}\right)\phi\left(\mathbf{X}^{\left(k_N^{u}(s(n),2)\right)}\right) - \left(\widehat{E}_{\phi,N}\right)^2,
\end{equation}
with $\left(s(n)\right)_{n\in[\![1,N_u]\!]}$ a sample of uniformly distributed integers in $[\![1,N]\!]$. These estimators require no more calls to $\phi$ than those used to obtain the i.i.d. sample. The most costly step is the search of the nearest neighbours and under some assumptions, those given-data estimators are asymptotically consistent when $N$ and $N_u$ go to $+\infty$ \cite{broto2020variance}.

\subsubsection{Aggregation procedures}
\label{ss:aggreg}

The final part of the estimation of the Shapley effects is the \textit{aggregation procedure}. It consists in the use of the previous estimations of the conditional indices to deduce an estimation of the $d$ Shapley values.

The following procedure is immediate and natural, and is called \textit{subset procedure} in \cite{broto2020variance}: \begin{enumerate}
    \item estimate the conditional indices for all $u \in \mathcal{P}(d)\backslash\lbrace\varnothing,[\![1,d]\!]\rbrace$ (see \Cref{rem:val_closed_sobol})
    \item for all $i\in[\![1,d]\!]$, estimate $\text{Sh}_i$ with \eqref{shapley}.
\end{enumerate} 
In the given-model framework, the computational cost to  estimate all the Shapley values with this aggregation procedure is then $N_{\mathbb{V}} + (2^d-2)N_IN_O$ with the double Monte Carlo method and $N_{\mathbb{V}} + 2(2^d-2)N_O$ with the Pick-Freeze method, where $N_{\mathbb{V}}$ is the size of the sample used to estimate $\mathbb{V}\left(\phi\left(\mathbf{X}\right)\right)$ and $N_O = N_u$ for all $u \in \mathcal{P}(d)\backslash\lbrace \varnothing, [\![1,d]\!]\rbrace$ (notation of \eqref{dmc}). Note that $2^d-2$ is the cardinal of $\mathcal{P}(d)\backslash\lbrace\varnothing,[\![1,d]\!]\rbrace$. Its computational cost prohibits its direct use when $d$ increases, however, note that \cite{broto2020variance} also suggests using different values of $N_u$ for $u\in\mathcal{P}(d)\backslash\lbrace\varnothing,[\![1,d]\!]\rbrace$ to tackle larger values of $d$ with a bounded computational budget.

The \textit{random-permutation procedure} was introduced by \cite{castro2009polynomial} in the context of game theory and its computational cost was later reduced by \cite{song2016shapley}. The idea is to rewrite the Shapley effect $\text{Sh}_i$ as an expectation over the set of all the permutations of $[\![1,d]\!]$, denoted as $\mathcal{S}(d)$: \begin{equation}
    \forall i \in [\![1,d]\!],\ \text{Sh}_i = \dfrac{1}{\mathbb{V}\left(\phi\left(\mathbf{X}\right)\right)} \mathbb{E}_\Pi\left(\text{VE}_{P_i(\Pi) \cup \lbrace i \rbrace} - \text{VE}_{P_i(\Pi)}\right),
\end{equation} where for $\pi \in \mathcal{S}(d), \ P_i(\pi) = \left\lbrace \pi(j) / j \in [\![1,\pi^{-1}(i)-1]\!]\right\rbrace$ and $\Pi$ is a random variable uniformly distributed over $\mathcal{S}(d)$. The expectation is then estimated using an i.i.d. sample $(\pi_j)_{j\in[\![1,m]\!]}$ of permutations uniformly distributed over $\mathcal{S}(d)$ with $m \ll d!$. In the given-model framework, the computational cost of the improved algorithm to estimate all the Shapley values proposed by \cite{song2016shapley} is $N_{\mathbb{V}} + m(d-1)N_IN_O$ with the double Monte Carlo method and $N_{\mathbb{V}} + 2m(d-1)N_O$ with the Pick-Freeze method. The numerical experiments in \cite{broto2020variance} suggest that it has a higher variance than the subset procedure but its computational cost can be controlled with the parameter $m$.

\subsection{Extension to reliability-oriented sensitivity analysis}

Reliability analysis consists in the estimation of the failure probability $p_t = \mathbb{P}\left(\phi\left(\mathbf{X}\right)>t\right)$, for a fixed known threshold $t\in\mathbb{R}$. Classical Monte Carlo sampling is not adapted to this problem when $p_t$ is getting smaller because its computational cost becomes too large to obtain an accurate estimation. Therefore, several techniques have been developed in order to estimate $p_t$ more accurately: one can mentioned FORM/SORM methods \cite{hasofer1974exact,breitung1984asymptotic}, subset sampling \cite{cerou:inria-00584352} or line sampling \cite{koutsourelakis2004reliability} for example. Another method, importance sampling, is reviewed here before coming back to sensitivity analysis for reliability purpose.

\subsubsection{Importance sampling}
\label{ss:is}
\textit{Importance sampling} (IS) is a very usual variance-reduction technique which was introduced by \cite{Kahn1951SplittingParticleTransmission} and applied for the first time in reliability analysis by \cite{Shinozuka1983BasicAO}. In the case of the estimation of a failure probability $p_t = \mathbb{P}\left(\phi\left(\mathbf{X}\right)>t\right) = \mathbb{E}_{f_{\mathbf{X}}}\left(\mathbf{1}\left(\phi\left(\mathbf{X}\right)>t\right)\right)$, it consists in rewriting the expectation according to an auxiliary density $g : \mathbb{X} \longrightarrow \mathbb{R}_+$  as $\mathbb{E}_{g}\left(\mathbf{1}\left(\phi\left(\mathbf{X}\right)>t\right)w^g\left(\mathbf{X}\right)\right)$, where $w^g\left(\mathbf{x}\right) = f_{\mathbf{X}}(\mathbf{x})/g(\mathbf{x})$ is the \textit{likelihood ratio}. To get an unbiased estimate, the support of $g$ must contain the support of $\mathbf{x}\in\mathbb{X} \mapsto \mathbf{1}\left(\phi\left(\mathbf{x}\right)>t\right)f_{\mathbf{X}}(\mathbf{x)}$. The corresponding estimator is then given by: \begin{equation}\label{ptisest}\widehat{p}_{t,N}^{IS} = \frac{1}{N}\sum_{n=1}^N \mathbf{1}\left(\phi(\mathbf{X}^{(n)}) > t\right)w^g\left(\mathbf{X}^{(n)}\right),\end{equation} with $\left(\mathbf{X}^{(n)}\right)_{n\in [\![1,N]\!]}$ an i.i.d. sample distributed according to the IS auxiliary distribution $g$. It is consistent and unbiased, and it has zero-variance if and only if $g = g_{\text{opt}}$ with $\forall \mathbf{x} \in \mathbb{X}$, $g_{\text{opt}}\left(\mathbf{x}\right) \propto \mathbf{1}\left(\phi\left(\mathbf{x}\right)>t\right)f_{\mathbf{X}}(\mathbf{x)}$, which is the density of the input distribution with PDF $f_{\mathbf{X}}$ restricted to the failure domain \cite{bucklew2004introduction}. This optimal density can not be considered in practice because the normalizing constant is $p_t$, which is the quantity to estimate, but many techniques exist to approach $g_{\text{opt}}$ by a near-optimal auxiliary density: methods based on the design point \cite{harbitz1983efficient}, non-parametric methods \cite{zhang1996nonparametric} or the cross-entropy method \cite{de2005tutorial,rubinstein2013cross}.

\subsubsection{Target sensitivity analysis}

Reliability-oriented sensitivity analysis can be divided into two categories, regarding the goal considered \cite{raguet2018target,MARREL2021107711}: \textit{target sensitivity analysis} and \textit{conditional sensitivity analysis}. In this article, only the first one will be examined. TSA combines both reliability and sensitivity analyses and aims at studying the influence of each input variable on the occurrence of the failure event. To that end, one can apply the previous variance-based global sensitivity indices described in \Cref{subsec:2.1} to the quantity of interest $\mathbf{1}\left(\phi\left(\mathbf{X}\right) > t\right)$ instead of the output $Y = \phi\left(\mathbf{X}\right)$: these new indices are called \textit{target indices} \cite{MARREL2021107711,ILIDRISSI2021105115}. In particular, the definitions \eqref{sobol} to \eqref{closed_sobol} are directly extended by replacing $\phi\left(\mathbf{X}\right)$ by $\mathbf{1}\left(\phi\left(\mathbf{X}\right) > t\right)$. Techniques based on Monte Carlo sampling \cite{wei2012efficient} and non-parametric estimation \cite{perrin2019efficient} have been proposed to estimate first-order and total-order target Sobol indices, whereas as far as we know, estimators of the target Shapley effects $\text{T-Sh}_i$ have only been recently suggested in \cite{ILIDRISSI2021105115} and are strongly inspired from those in \cite{broto2020variance}. The estimators from \cite{ILIDRISSI2021105115} are based on a classic Monte Carlo sampling from the input distribution of PDF $f_{\mathbf{X}}$ in both given-model and given-data frameworks. In the latter framework in \cite{ILIDRISSI2021105115}, a reliability analysis is first performed and leads to the estimation of $p_t$ with a Monte Carlo sampling. Then, the target Shapley effects are estimated from the available sample with the estimator in \eqref{MC_KNN} combined with the random permutation procedure. However, numerical experiments in \Cref{sec:numerical_experiments} highlight their limits when the failure probability is getting smaller: their variance increases and a large Monte Carlo sample is thus required to get an accurate estimation, and make them hardly applicable with a costly computer model $\phi$. The main goal of this article is thus to remedy these shortcomings with importance sampling.

\section{Target Shapley effects estimation by importance sampling}
\label{sec:contribution}

In this section, we suggest new estimators of the $d$ target Shapley effects $\text{T-Sh}_i$, defined as in \eqref{shapley} with $\phi\left(\mathbf{X}\right)$ replaced by $\mathbf{1}\left(\phi\left(\mathbf{X}\right)>t\right)$, by importance sampling when the input variables are correlated. To that end, we propose four new estimators of the target conditional indices $\text{T-VE}_u$ and $\text{T-EV}_u$, defined as $\text{VE}_u$ and $\text{EV}_u$ with $\phi\left(\mathbf{X}\right)$ replaced by $\mathbf{1}\left(\phi\left(\mathbf{X}\right)>t\right)$, based on those of the previous section: \begin{enumerate}
    \item an unbiased estimator of $\text{T-EV}_u$ by double Monte Carlo with importance sampling given-model
    \item an estimator of $\text{T-EV}_u$ by double Monte Carlo with importance sampling given-data
    \item an unbiased estimator of $\text{T-VE}_u$ by Pick-Freeze with importance sampling given-model
    \item an estimator of $\text{T-VE}_u$ by Pick-Freeze with importance sampling given-data.
\end{enumerate}
Recall that the given-model framework is described at the beginning of \Cref{sss:2.2.1} and that the given-data framework is described in \Cref{sss:2.2.3}. Then, the two aggregation procedures described in \Cref{ss:aggreg} provide eight new estimators of the target Shapley effects which have a lower variance than the existing ones when the auxiliary density is adapted to the problem, as will be shown numerically in \Cref{sec:numerical_experiments}.

For any subset $u \in \mathcal{P}(d)\backslash\lbrace \varnothing, [\![1,d]\!]\rbrace$, for any $\mathbf{x}_u \in \mathbb{X}_u$, we let $f_{\mathbf{X}_u}$ and $f_{\mathbf{X}_{-u}|\mathbf{X}_u = \mathbf{x}_u}$ denote respectively the PDF of the marginal distribution of $\mathbf{X}_u$ and the PDF of the conditional distribution of $\mathbf{X}_{-u}|\mathbf{X}_u = \mathbf{x}_u$. In addition, let $g:\mathbb{X}\to\mathbb{R}_+$ be the PDF of the IS auxiliary distribution as in \Cref{ss:is} and $g_{\mathbf{X}_u}$ and $g_{\mathbf{X}_{-u}|\mathbf{X}_u = \mathbf{x}_u}$ be the PDFs of its marginal and its conditional distributions. Moreover, in order to lighten the notations, for all $t\in\mathbb{R}$ and for all $\mathbf{x} \in \mathbb{X}$, let us write: \begin{equation}\psi_t\left(\mathbf{x}\right) = \mathbf{1}\left(\phi\left(\mathbf{x}\right)>t\right) \mbox{ and } w_t^g\left(\mathbf{x}\right) = \psi_t\left(\mathbf{x}\right)\dfrac{f_{\mathbf{X}}\left(\mathbf{x}\right)}{g\left(\mathbf{x}\right)}.\end{equation} Finally, in the following, the convention $0/0=0$ will be used, and in particular, this implies that for all $\mathbf{x}_{-u} \in \mathbb{X}_{-u}$, $f_{\mathbf{X}_{-u}|\mathbf{X}_u = \mathbf{x}_u}\left(\mathbf{x}_{-u}\right) = 0$ for any $\mathbf{x}_u \in \mathbb{X}_u$ such that $f_{\mathbf{X}_u}\left(\mathbf{x}_u\right) = 0$ (see also \Cref{remark:3.1,remark:3.2}).

\subsection{Estimation of $\text{T-EV}_u$ by double Monte Carlo with importance sampling}
\label{sec:3.1}

For the same reasons as in \cite{song2016shapley}, for $u \in \mathcal{P}(d)\backslash\lbrace \varnothing, [\![1,d]\!]\rbrace$, we choose to estimate the target conditional index $\text{T-EV}_u$ by double Monte Carlo. However, contrary to the existing literature, in order to introduce importance sampling in the estimation process we write this target conditional index as: \begin{equation}\label{lemma4.1}
    \text{T-EV}_u = \mathbb{E}_{f_{\mathbf{X}}}\left[\mathbb{V}_{f_{\mathbf{X}}}\left(\psi_t(\mathbf{X})|\mathbf{X}_{-u} \right) \right] = p_t - \mathbb{E}_{f_{\mathbf{X}}}\left[\mathbb{E}_{f_{\mathbf{X}}}\left(\psi_t(\mathbf{X})|\mathbf{X}_{-u}\right)^2 \right].
\end{equation} The estimator $\widehat{p}_{t,N}^{\text{IS}}$ in \eqref{ptisest} already provides an unbiased and convergent estimation of the failure probability $p_t$ by importance sampling. The main problem is thus to estimate $\mathbb{E}_{f_{\mathbf{X}}}\left[\mathbb{E}_{f_{\mathbf{X}}}\left(\psi_t(\mathbf{X})|\mathbf{X}_{-u}\right)^2 \right]$ by importance sampling. Let us rewrite this quantity according to the IS auxiliary density $g$ (see \ref{app:proof_lemma_fund_is_dmc}): \begin{equation}\label{ismc1}
    \mathbb{E}_{f_{\mathbf{X}}}\left[\mathbb{E}_{f_{\mathbf{X}}}\left(\psi_t(\mathbf{X})|\mathbf{X}_{-u}\right)^2 \right] = \mathbb{E}_g\left[\mathbb{E}_g\left(\psi_t(\mathbf{X})\left.\dfrac{f_{\mathbf{X}_u|\mathbf{X}_{-u}}\left(\mathbf{X}_u\right)}{g_{\mathbf{X}_u|\mathbf{X}_{-u}}\left(\mathbf{X}_u\right)}\right|\mathbf{X}_{-u}\right)^2 \dfrac{f_{\mathbf{X}_{-u}}\left(\mathbf{X}_{-u}\right)}{g_{\mathbf{X}_{-u}}\left(\mathbf{X}_{-u}\right)}\right].
\end{equation} However, in the rest of the article, we will not assume that it is possible to evaluate directly the conditional PDFs $f_{\mathbf{X}_u|\mathbf{X}_{-u} = \mathbf{x}_{-u}}$ and $g_{\mathbf{X}_u|\mathbf{X}_{-u} = \mathbf{x}_{-u}}$ at any point of $\mathbb{X}_u$ for all $\mathbf{x}_{-u} \in \mathbb{X}_{-u}$, as it can be a restrictive hypothesis in practice, especially with non trivial input distributions. Therefore, using the definition of the conditional PDF, we choose to rewrite the ratio $f_{\mathbf{X}_u|\mathbf{X}_{-u}}/g_{\mathbf{X}_u|\mathbf{X}_{-u}}$ in the form $f_{\mathbf{X}}/f_{\mathbf{X}_{-u}} \times g_{\mathbf{X}_{-u}}/g_{\mathbf{X}}$. From this point, some calculations developed in \ref{app:proof_lemma_fund_is_dmc} lead to the following lemma. \begin{lemma}\label{fund_is_dmc}For any IS auxiliary density $g : \mathbb{X}\longrightarrow\mathbb{R}_+$, we have:
\begin{equation}\label{ismc}
    \mathbb{E}_{f_{\mathbf{X}}}\left[\mathbb{E}_{f_{\mathbf{X}}}\left(\psi_t(\mathbf{X})|\mathbf{X}_{-u}\right)^2 \right] = \mathbb{E}_g\left[\mathbb{E}_g\left(w_t^g(\mathbf{X})\left.\right|\mathbf{X}_{-u}\right)^2 \dfrac{g_{\mathbf{X}_{-u}}\left(\mathbf{X}_{-u}\right)}{f_{\mathbf{X}_{-u}}\left(\mathbf{X}_{-u}\right)}\right].
\end{equation}
\end{lemma}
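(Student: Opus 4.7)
The plan is to unfold the left-hand side as nested integrals, introduce the joint IS density $g$ in the inner integral via a multiply-and-divide trick, then re-express the outer integral as an expectation under $g_{\mathbf{X}_{-u}}$ using the ratio $f_{\mathbf{X}_{-u}}/g_{\mathbf{X}_{-u}}$. The squaring is what creates the extra factor $g_{\mathbf{X}_{-u}}/f_{\mathbf{X}_{-u}}$ appearing on the right-hand side, so that factor will emerge naturally rather than be put in by hand.

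More concretely, I would first fix $\mathbf{x}_{-u} \in \mathbb{X}_{-u}$ with $f_{\mathbf{X}_{-u}}(\mathbf{x}_{-u}) > 0$ and write
\begin{equation*}
\mathbb{E}_{f_{\mathbf{X}}}\bigl(\psi_t(\mathbf{X}) \,\big|\, \mathbf{X}_{-u} = \mathbf{x}_{-u}\bigr)
= \int_{\mathbb{X}_u} \psi_t(\mathbf{x}_u,\mathbf{x}_{-u}) \, \frac{f_{\mathbf{X}}(\mathbf{x}_u,\mathbf{x}_{-u})}{f_{\mathbf{X}_{-u}}(\mathbf{x}_{-u})} \, d\mathbf{x}_u,
\end{equation*}
by definition of the conditional density (already writing $f_{\mathbf{X}_u|\mathbf{X}_{-u}}$ as a ratio of joint and marginal, as suggested in the excerpt, to avoid ever evaluating conditional densities directly). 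Then multiply and divide the integrand by $g(\mathbf{x}_u,\mathbf{x}_{-u}) = g_{\mathbf{X}_u|\mathbf{X}_{-u}=\mathbf{x}_{-u}}(\mathbf{x}_u)\, g_{\mathbf{X}_{-u}}(\mathbf{x}_{-u})$ so as to recognise $w_t^g$ in the numerator and a conditional-$g$-density in the measure, which yields
\begin{equation*}
\mathbb{E}_{f_{\mathbf{X}}}\bigl(\psi_t(\mathbf{X}) \,\big|\, \mathbf{X}_{-u} = \mathbf{x}_{-u}\bigr)
= \frac{g_{\mathbf{X}_{-u}}(\mathbf{x}_{-u})}{f_{\mathbf{X}_{-u}}(\mathbf{x}_{-u})} \, \mathbb{E}_g\bigl(w_t^g(\mathbf{X}) \,\big|\, \mathbf{X}_{-u} = \mathbf{x}_{-u}\bigr).
\end{equation*}

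Squaring and integrating against $f_{\mathbf{X}_{-u}}$ gives a factor $(g_{\mathbf{X}_{-u}}/f_{\mathbf{X}_{-u}})^2 \cdot f_{\mathbf{X}_{-u}} = (g_{\mathbf{X}_{-u}}/f_{\mathbf{X}_{-u}}) \cdot g_{\mathbf{X}_{-u}}$, so one simplification of the ratios turns the outer integral into an expectation against $g_{\mathbf{X}_{-u}}$ with an integrand carrying exactly the extra weight $g_{\mathbf{X}_{-u}}(\mathbf{X}_{-u})/f_{\mathbf{X}_{-u}}(\mathbf{X}_{-u})$, which is the announced formula.

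The main obstacle is bookkeeping on the supports and the $0/0=0$ convention. Strictly speaking, the manipulations above require $g>0$ wherever the integrand is nonzero; concretely, one must check that $\{\psi_t f_{\mathbf{X}} > 0\} \subseteq \{g>0\}$ (which is already the IS unbiasedness requirement from \Cref{ss:is}) and that $f_{\mathbf{X}_{-u}}(\mathbf{x}_{-u}) > 0$ whenever the inner integral is nonzero (which follows from the support assumption on $g$ together with the convention, since then $g_{\mathbf{X}_{-u}}>0$ implies $f_{\mathbf{X}_{-u}}>0$ on the relevant set and the ratios are defined unambiguously). Once these support issues are written out carefully, the computation itself is a two-line change-of-variables argument; the remainder of the proof is a matter of presenting the algebra cleanly, and the detailed version is deferred to \ref{app:proof_lemma_fund_is_dmc} as already announced.
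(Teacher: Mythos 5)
Your proof is correct and follows essentially the same route as the paper's: both unfold the nested expectation as integrals, introduce $g$ by a multiply-and-divide step, replace the conditional densities by ratios of joint and marginal densities, and let the factor $g_{\mathbf{X}_{-u}}/f_{\mathbf{X}_{-u}}$ emerge from squaring the inner conditional expectation. The only difference is cosmetic: you fold the two substitutions into one pass on the inner integral, whereas the paper first writes the intermediate identity \eqref{ismc1} with conditional likelihood ratios and then converts; your remarks on supports and the $0/0=0$ convention match the paper's treatment.
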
 \begin{proof}See \ref{app:proof_lemma_fund_is_dmc}.\end{proof} Because of the described transformation, one can notice that the likelihood ratios in \eqref{ismc} are not all in the classic form $f_{\mathbf{X}}/g$, there is a term in the form $g/f_{\mathbf{X}}$ in the outer expectation. From \eqref{ismc}, it is thus possible to introduce double Monte Carlo estimators by importance sampling of the target conditional index $\text{T-EV}_u$ in both given-model and given-data frameworks.

\subsubsection{Given-model framework with importance sampling}
\label{sss:MC_ngd}
In the given-model framework with importance sampling, assume that:\begin{itemize}
    \item we can evaluate the code $\phi$ in any point of $\mathbb{X}$
    \item it is possible to generate an i.i.d. sample from the distribution of $g_{\mathbf{X}_{-u}}$
    \item for any $\mathbf{x}_{-u} \in \mathbb{X}_{-u}$, it is possible to generate an i.i.d. sample from the distribution of $g_{\mathbf{X}_{u}|\mathbf{X}_{-u} = \mathbf{x}_{-u}}$
    \item it is possible to evaluate $f_{\mathbf{X}}$ and $g$ in any point of $\mathbb{X}$
    \item it is possible to evaluate $f_{\mathbf{X}_{-u}}$ and $g_{\mathbf{X}_{-u}}$ in any point of $\mathbb{X}_{-u}$.
\end{itemize} The third hypothesis is reasonable because we are free to choose $g$ such that the sampling from the conditional distributions is not problematic. Moreover, it is no longer necessary to evaluate the conditional PDFs of $f_{\mathbf{X}}$ and $g$, which is convenient as discussed above. The new writing \eqref{ismc} suggests then to introduce the following double Monte Carlo estimator by importance sampling: \begin{equation}\label{tevngd}
    \widehat{\text{T-EV}}_{u,\text{MC}}^{\text{IS}} = \widehat{p}_{t,N}^{\text{IS}} - \left( \dfrac{1}{N_u}\sum_{n=1}^{N_u}\left(\overline{\psi_t^{\text{IS}}(\mathbf{X}_{-u}^{(n)})}\right)^2\dfrac{g_{\mathbf{X}_{-u}}\left(\mathbf{X}_{-u}^{(n)}\right)}{f_{\mathbf{X}_{-u}}\left(\mathbf{X}_{-u}^{(n)}\right)} - \widehat{E}_{\text{bias},u}^{IS} \right),
\end{equation} where $\left(\mathbf{X}_{-u}^{(n)}\right)_{n \in [\![1,N_u]\!]}$ is an i.i.d. sample from the distribution of $g_{\mathbf{X}_{-u}}$, where for all $n \in [\![1,N_u]\!]$, $\left(\mathbf{X}_{u}^{(n,k)}\right)_{k \in [\![1,N_I]\!]}$ is an i.i.d. sample from the distribution of $g_{\mathbf{X}_u|\mathbf{X}_{-u} = \mathbf{X}_{-u}^{(n)}}$ and \begin{equation}\label{inner_loop}
    \overline{\psi_t^{\text{IS}}\left(\mathbf{X}_{-u}^{(n)}\right)} = \dfrac{1}{N_I} \sum\limits_{k=1}^{N_I}w_t^g\left(\mathbf{X}_u^{(n,k)},\mathbf{X}_{-u}^{(n)}\right),
\end{equation} and where the term $\widehat{E}_{\text{bias},u}^{\text{IS}}$ is defined by: \begin{equation}\label{bias_est}\widehat{E}_{\text{bias},u}^{\text{IS}} = \dfrac{1}{N_u} \sum\limits_{n=1}^{N_u} \dfrac{1}{N_I-1}\left[\dfrac{1}{N_I} \sum\limits_{k=1}^{N_I}w_t^g\left( \mathbf{X}_u^{(n,k)},\mathbf{X}_{-u}^{(n)}\right)^2 - \left(\overline{\psi_t^{\text{IS}}\left(\mathbf{X}_{-u}^{(n)}\right)}\right)^2\right] \dfrac{g_{\mathbf{X}_{-u}}\left(\mathbf{X}_{-u}^{(n)}\right)}{f_{\mathbf{X}_{-u}}\left(\mathbf{X}_{-u}^{(n)}\right)}.\end{equation}
As in \cite{song2016shapley}, this estimator is composed of an inner loop of size $N_I$ for the inner conditional expectation and an outer loop of size $N_u$ (which depends on $u$) for the outer expectation. Moreover, it is clear that for all $n \in [\![1,N_u]\!]$, $\overline{\psi_t^{\text{IS}}(\mathbf{X}_{-u}^{(n)})}$ is an unbiased estimator of $\mathbb{E}_g\left(w_t^g(\mathbf{X})\left.\right|\mathbf{X}_{-u} = \mathbf{X}_{-u}^{(n)}\right)$. However, $\left(\overline{\psi_t^{\text{IS}}(\mathbf{X}_{-u}^{(n)})}\right)^2$ is not an unbiased estimator of $\mathbb{E}_g\left(w_t^g(\mathbf{X})\left.\right|\mathbf{X}_{-u} = \mathbf{X}_{-u}^{(n)}\right)^2$ because of the square which creates a bias. This bias from the inner loop spreads itself into the outer loop and finally the bias of the uncorrected double Monte Carlo estimator by importance sampling of $\text{T-EV}_u$ (obtained by removing $\widehat{E}_{\text{bias},u}^{IS}$ in \eqref{tevngd}) is $\mathbb{E}_g\left[\mathbb{V}_g\left(\overline{\psi_t^{\text{IS}}(\mathbf{X}_{-u})}|\mathbf{X}_{-u}\right)g_{\mathbf{X}_{-u}}(\mathbf{X}_{-u})\left/f_{\mathbf{X}_{-u}}(\mathbf{X}_{-u})\right.\right]$, where for all $\mathbf{x}_{-u} \in \mathbb{X}_{-u}$: \begin{equation}\label{psi_t_is}
    \overline{\psi_t^{\text{IS}}\left(\mathbf{x}_{-u}\right)} = \dfrac{1}{N_I} \sum\limits_{k=1}^{N_I}w_t^g\left(\mathbf{X}_u^{(k)},\mathbf{x}_{-u}\right),
\end{equation} with $\left(\mathbf{X}_u^{(k)}\right)_{k\in[\![1,N_I]\!]}$ an i.i.d. sample from the distribution of $g_{\mathbf{X}_u|\mathbf{X}_{-u} = \mathbf{x}_{-u}}$. The term $\widehat{E}_{\text{bias},u}^{IS}$ in \eqref{bias_est} is then an unbiased estimator of the latter bias and allows to correct it. This leads to the main result of this section: \begin{proposition}\label{dmc_is_ub}$\widehat{\text{T-EV}}_{u,\text{MC}}^{\text{IS}}$ in \eqref{tevngd} is an unbiased double Monte Carlo estimator by importance sampling of the conditional index $\text{T-EV}_u$, and requires $N_uN_I$ calls to the function $\phi$ in addition to those to estimate $p_t$.\end{proposition}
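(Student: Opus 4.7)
The plan is to directly verify unbiasedness by taking expectations term by term, using tower property and the elementary bias identity for the square of a sample mean. From Lemma~\ref{fund_is_dmc} we have
\begin{equation*}
\text{T-EV}_u \;=\; p_t \;-\; \mathbb{E}_g\!\left[\mathbb{E}_g\!\left(w_t^g(\mathbf{X})\mid\mathbf{X}_{-u}\right)^2 \dfrac{g_{\mathbf{X}_{-u}}(\mathbf{X}_{-u})}{f_{\mathbf{X}_{-u}}(\mathbf{X}_{-u})}\right],
\end{equation*}
so it suffices to check that $\widehat{p}_{t,N}^{\text{IS}}$ is unbiased for $p_t$ (which is standard importance sampling, recalled in \Cref{ss:is}) and that the remaining bracketed term in \eqref{tevngd} is an unbiased estimator of the second summand above.

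The central step will be a conditional expectation argument. For each fixed $n$, write $W_k^{(n)} = w_t^g(\mathbf{X}_u^{(n,k)},\mathbf{X}_{-u}^{(n)})$ and $\bar W^{(n)} = \overline{\psi_t^{\text{IS}}(\mathbf{X}_{-u}^{(n)})}$. Conditional on $\mathbf{X}_{-u}^{(n)}$, the variables $(W_k^{(n)})_{k=1}^{N_I}$ are i.i.d.\ under $g$ with conditional mean $\mu(\mathbf{X}_{-u}^{(n)}) = \mathbb{E}_g(w_t^g(\mathbf{X})\mid\mathbf{X}_{-u}^{(n)})$ and some conditional variance $\sigma^2(\mathbf{X}_{-u}^{(n)})$. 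The identity $\mathbb{E}[(\bar W^{(n)})^2\mid\mathbf{X}_{-u}^{(n)}] = \mu(\mathbf{X}_{-u}^{(n)})^2 + \sigma^2(\mathbf{X}_{-u}^{(n)})/N_I$ then shows that the first bracketed sum in \eqref{tevngd} has a bias of exactly $\mathbb{E}_g[\sigma^2(\mathbf{X}_{-u})/N_I \cdot g_{\mathbf{X}_{-u}}(\mathbf{X}_{-u})/f_{\mathbf{X}_{-u}}(\mathbf{X}_{-u})]$. I would then recognize the inner bracket of \eqref{bias_est} as $\tfrac{1}{N_I}\cdot\tfrac{N_I}{N_I-1}\bigl[\tfrac{1}{N_I}\sum_k (W_k^{(n)})^2 - (\bar W^{(n)})^2\bigr]$, namely $S_n^2/N_I$ where $S_n^2$ is the usual unbiased sample variance of the $W_k^{(n)}$. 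Taking conditional expectation gives $\mathbb{E}[S_n^2/N_I\mid\mathbf{X}_{-u}^{(n)}] = \sigma^2(\mathbf{X}_{-u}^{(n)})/N_I$, so $\widehat{E}_{\text{bias},u}^{\text{IS}}$ is unbiased for exactly the bias above.

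Combining these pieces via the tower property (taking expectations first conditional on $\mathbf{X}_{-u}^{(n)}$, then marginalizing under $g_{\mathbf{X}_{-u}}$) cancels the bias and recovers the right-hand side of the Lemma~\ref{fund_is_dmc} identity. The cost count is immediate: each of the $N_u$ outer draws triggers an inner loop of size $N_I$, and each inner evaluation of $w_t^g$ requires one evaluation of $\phi$ through $\psi_t$, giving $N_u N_I$ calls beyond those already spent on $\widehat{p}_{t,N}^{\text{IS}}$.

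The main obstacle I anticipate is purely bookkeeping: correctly identifying the bracketed expression in \eqref{bias_est} as (a scalar multiple of) the unbiased sample variance and tracking the weights $g_{\mathbf{X}_{-u}}/f_{\mathbf{X}_{-u}}$ through the conditioning. Once the sample variance identity is in place, unbiasedness follows cleanly from linearity of expectation and the conditional mean/variance decomposition, with no integrability issues beyond those already assumed when $g$ has support containing that of $\psi_t f_{\mathbf{X}}$ (as recalled in \Cref{ss:is}).
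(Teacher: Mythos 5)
Your proof is correct and follows essentially the same route as the paper's: conditioning on $\mathbf{X}_{-u}^{(n)}$ to isolate the $\sigma^2(\mathbf{X}_{-u}^{(n)})/N_I$ bias of the squared inner mean, recognizing the bracket in \eqref{bias_est} as the unbiased sample variance divided by $N_I$ (which is exactly what the paper's auxiliary \Cref{esti_var_est} establishes), and then cancelling via the tower property and Lemma~\ref{fund_is_dmc}. The cost count matches as well.
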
\begin{proof}See \ref{app:proof_prop_dmc_is_ub}.\end{proof}

\subsubsection{Given-data framework with importance sampling}
\label{sss:MC_gd}
In the given-data framework with importance sampling, assume that: \begin{itemize}
    \item an i.i.d. input/output sample $\left(\mathbf{X}^{(n)},\psi_t\left(\mathbf{X}^{(n)}\right)\right)_{n\in[\![1,N]\!]}$ with $\left(\mathbf{X}^{(n)}\right)_{n\in[\![1,N]\!]}$ distributed according to the IS auxiliary distribution $g$ is available
    \item it is possible to evaluate $f_{\mathbf{X}}$ and $g$ in any point of $\mathbb{X}$
    \item it is possible to evaluate $f_{\mathbf{X}_{-u}}$ and $g_{\mathbf{X}_{-u}}$ in any point of $\mathbb{X}_{-u}$.
\end{itemize}
Contrary to the given-model framework, here the black-box model $\phi$ is no longer available and it is not possible to generate any additional sample from any distribution. Typically, in the context of ROSA, the i.i.d. sample from $g$ is already obtained from a previous importance-sampling-based estimation of $p_t$ (see \Cref{ss:3.3}). The extension of the double Monte Carlo estimator of $\text{T-EV}_u$ by importance sampling \eqref{tevngd} to the given-data framework is based on the nearest-neighbours method introduced in \cite{broto2020variance} and briefly described in \Cref{sec:review}. Here, for $i\in[\![1,N]\!]$, the exact sampling from the conditional auxiliary PDF $g_{\mathbf{X}_u|\mathbf{X}_{-u} = \mathbf{X}_{-u}^{(i)}}$ is approximated by the $N_I$ nearest neighbours of $\mathbf{X}_{-u}^{(i)}$ among the sample $\left(\mathbf{X}_{-u}^{(n)}\right)_{n\in[\![1,N]\!]}$. Then, the extended estimator is: \begin{equation}\label{tevgd}
    \widehat{\text{T-EV}}_{u,\text{MC}}^{\text{IS},\text{KNN}} = \widehat{p}_{t,N}^{\text{IS}} - \left( \dfrac{1}{N_u}\sum_{n=1}^{N_u}\left(\overline{\psi_{t,u}^{\text{IS},\text{KNN}}(\mathbf{X}^{(s(n))})}\right)^2\dfrac{g_{\mathbf{X}_{-u}}\left(\mathbf{X}_{-u}^{(s(n))}\right)}{f_{\mathbf{X}_{-u}}\left(\mathbf{X}_{-u}^{(s(n))}\right)} - \widehat{E}_{\text{bias},u}^{\text{IS},\text{KNN}} \right),
\end{equation} where $\left(s(n)\right)_{n\in[\![1,N_u]\!]}$ is a sample of uniformly distributed integers in $[\![1,N]\!]$, where for all $n\in[\![1,N_u]\!]$, \begin{equation}
    \overline{\psi_{t,u}^{\text{IS},\text{KNN}}\left(\mathbf{X}^{(s(n))}\right)} = \dfrac{1}{N_I} \sum\limits_{k=1}^{N_I}w_t^g\left(\mathbf{X}^{(k_N^{-u}(s(n),k))}\right),
\end{equation} with $k_N^{-u}$ defined in \Cref{sss:2.2.3} and where the term $\widehat{E}_{\text{bias},u}^{\text{IS},\text{KNN}}$ is defined by: \begin{equation}\label{bias_est_gd}\widehat{E}_{\text{bias},u}^{\text{IS},\text{KNN}} = \dfrac{1}{N_u} \sum\limits_{n=1}^{N_u} \dfrac{1}{N_I-1}\left[\dfrac{1}{N_I} \sum\limits_{k=1}^{N_I}w_t^g\left(\mathbf{X}^{(k_N^{-u}(s(n),k))}\right)^2 - \left(\overline{\psi_{t,u}^{\text{IS},\text{KNN}}\left(\mathbf{X}^{(s(n))}\right)}\right)^2\right]\dfrac{g_{\mathbf{X}_{-u}}\left(\mathbf{X}_{-u}^{(s(n))}\right)}{f_{\mathbf{X}_{-u}}\left(\mathbf{X}_{-u}^{(s(n))}\right)}.\end{equation}
It does not require any additional call to $\phi$ to those from the already available i.i.d. sample. In the same way as the existing given-data estimators in GSA and ROSA (see \Cref{sec:review}), the costly part of this algorithm is the computation of the nearest neighbours at each step. Moreover, both estimators \eqref{tevngd} and \eqref{tevgd} have the same general structure. In \eqref{tevngd}, the bias created by the square in the inner loop is corrected by the estimator \eqref{bias_est}. This structure is kept in \eqref{tevgd} and \eqref{bias_est_gd}. Nevertheless, for given finite values of $N_u$ and $N_I$, the given-data estimator \eqref{tevgd} still has a bias caused by the nearest neighbour approximation, and as far as we know, no article has proposed any theoretical study of this bias.

\begin{remark}
Let us provide some additional motivations for the given-data with importance sampling framework. First, we assumed here that the computer model is no longer available and that we can only use the available sample to estimate the target Shapley effects. One can notice that it might be possible to get around this problem by fitting a surrogate model, such as a Gaussian process for example, with the available sample and then coming back to the given-model framework. However, this approach has some drawbacks that a practitioner may prefer to avoid. In fact, when $N$ is large, it is not straightforward to fit efficiently a surrogate model, even more when the input dimension increases. Furthermore, even if the given-model framework with a surrogate model does not suffer from the nearest-neighbour approximation, the surrogate model introduces another approximation error which is hard to quantify and is added to the estimation error of each index. This error is not necessarily lower than the error due to the nearest-neighbour approximation when the dimension increases.

Second, we assumed that it is not possible to draw additional samples from any distribution. In the given-model framework, it is necessary to be able to draw samples according to the conditional distributions of $g$. In practice, most of the time, Gaussian auxiliary distributions are chosen and it is then straightforward to sample from their conditional distributions. However, in some cases, auxiliary distributions belonging to other parametric families could be more relevant but do not satisfy this criterion.\end{remark}

\subsection{Estimation of $\text{T-VE}_u$ by Pick-Freeze with importance sampling}

For any subset $u \in \mathcal{P}(d)\backslash\lbrace \varnothing, [\![1,d]\!]\rbrace$, we estimate the target conditional index $\text{T-VE}_u$ by Pick-Freeze and with importance sampling. In ROSA with correlated inputs, we recall the fundamental equation of Pick-Freeze \eqref{PF}: \begin{equation}\label{PF_fiab_dep}
    \text{T-VE}_u = \mathbb{V}_{f_{\mathbf{X}}}\left[\mathbb{E}_{f_{\mathbf{X}}}\left(\psi_t(\mathbf{X}) | \mathbf{X}_u\right)\right] = \mathbb{E}_{f_{\mathbf{X}}}\left[\psi_t(\mathbf{X})\psi_t(\mathbf{X}^u)\right] - p_t^2,
\end{equation} where $\mathbf{X}^{u} = \left(\mathbf{X}_u,\mathbf{X}_{-u}'\right)$ with $\mathbf{X}_{-u}' \overset{d}{=}\mathbf{X}_{-u} | \mathbf{X}_u$ and $\mathbf{X}_{-u}'\perp\!\!\!\perp\mathbf{X}_{-u} | \mathbf{X}_u$. For $\left(\mathbf{X}^{(n)}\right)_{n\in[\![1,N]\!]}$ an i.i.d. sample distributed according to the IS auxiliary distribution $g$, the estimator \begin{equation}\label{pt2_is_ub}\widehat{p}_{t,N}^{\text{IS},\text{ub}} = \left(\widehat{p}_{t,N}^{\text{IS}}\right)^2 - \dfrac{1}{N-1}\left[\dfrac{1}{N} \sum\limits_{n=1}^{N} w_t^g\left(\mathbf{X}^{(n)}\right)^2 - \left(\widehat{p}_{t,N}^{\text{IS}}\right)^2\right]\end{equation} is easily shown (see \ref{app:ub_pt2}) to be an unbiased estimator by importance sampling of $p_t^2$. The main problem is then to estimate the expectation $\mathbb{E}_{f_{\mathbf{X}}}\left[\psi_t(\mathbf{X})\psi_t(\mathbf{X}^u)\right]$ by importance sampling. Let us rewrite it according to the IS auxiliary density $g$:
\begin{lemma}\label{fund_is_pf}For any IS auxiliary density $g:\mathbb{X}\longrightarrow\mathbb{R}_+$, we have:\begin{equation}\label{is_pf}
    \mathbb{E}_{f_{\mathbf{X}}}\left(\psi_t(\mathbf{X})\psi_t(\mathbf{X}^u) \right) = \mathbb{E}_g\left(w_t^g\left(\mathbf{X} \right)w_t^g\left(\mathbf{X}^u \right)\dfrac{g_{\mathbf{X}_u}(\mathbf{X}_u)}{f_{\mathbf{X}_u}(\mathbf{X}_u) }\right).
\end{equation}\end{lemma}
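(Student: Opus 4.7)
The plan is to compute both sides as explicit integrals over the triple $(\mathbf{x}_u, \mathbf{x}_{-u}, \mathbf{x}'_{-u})$ and show they coincide through one algebraic manipulation. The crucial ingredient is the joint law of $(\mathbf{X}, \mathbf{X}^u)$. By construction, $\mathbf{X}^u$ shares its $u$-component with $\mathbf{X}$, while $\mathbf{X}'_{-u}$ is drawn conditionally independently of $\mathbf{X}_{-u}$ given $\mathbf{X}_u$ and with the same conditional distribution. So the joint density on $(\mathbf{x}_u,\mathbf{x}_{-u},\mathbf{x}'_{-u})$ factors as
\[
f_{\mathbf{X}_u}(\mathbf{x}_u)\, f_{\mathbf{X}_{-u}|\mathbf{X}_u}(\mathbf{x}_{-u}|\mathbf{x}_u)\, f_{\mathbf{X}_{-u}|\mathbf{X}_u}(\mathbf{x}'_{-u}|\mathbf{x}_u)
= \frac{f_{\mathbf{X}}(\mathbf{x}_u,\mathbf{x}_{-u})\, f_{\mathbf{X}}(\mathbf{x}_u,\mathbf{x}'_{-u})}{f_{\mathbf{X}_u}(\mathbf{x}_u)}.
\]
First I would verify this factorization (it uses the definition of the conditional PDF in both directions). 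Then I would observe the exactly analogous identity for the auxiliary distribution, namely that the $g$-analogue of the triple has joint density $g(\mathbf{x}_u,\mathbf{x}_{-u})\,g(\mathbf{x}_u,\mathbf{x}'_{-u})/g_{\mathbf{X}_u}(\mathbf{x}_u)$.

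Next, I would write the left-hand side $\mathbb{E}_{f_{\mathbf{X}}}[\psi_t(\mathbf{X})\psi_t(\mathbf{X}^u)]$ as a triple integral against the first density, and multiply and divide by the second. The resulting ratio splits cleanly into three pieces: $f_{\mathbf{X}}(\mathbf{x}_u,\mathbf{x}_{-u})/g(\mathbf{x}_u,\mathbf{x}_{-u})$, $f_{\mathbf{X}}(\mathbf{x}_u,\mathbf{x}'_{-u})/g(\mathbf{x}_u,\mathbf{x}'_{-u})$, and the mismatched marginal correction $g_{\mathbf{X}_u}(\mathbf{x}_u)/f_{\mathbf{X}_u}(\mathbf{x}_u)$. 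Absorbing the two indicator factors from $\psi_t(\mathbf{X})\psi_t(\mathbf{X}^u)$ into the first two ratios precisely reconstructs $w_t^g(\mathbf{X})$ and $w_t^g(\mathbf{X}^u)$. Reinterpreting the integral as an expectation under $g$ then delivers \eqref{is_pf}.

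The main thing to be careful about — I would not call it a real obstacle — is the support handling, which must be reconciled with the $0/0 = 0$ convention recalled just before \Cref{sec:3.1}. The indicator $\psi_t$ kills any point where $\phi$ does not exceed $t$, so the only points contributing to both sides are those where $\psi_t f_{\mathbf{X}}$ is nonzero; the standard IS support assumption that $g$ dominates $\psi_t f_{\mathbf{X}}$ (implicit in \Cref{ss:is}) ensures the two weights $w_t^g(\mathbf{X}),w_t^g(\mathbf{X}^u)$ are well-defined almost everywhere under $g$. For the marginal correction $g_{\mathbf{X}_u}/f_{\mathbf{X}_u}$, the potential singularity where $f_{\mathbf{X}_u}$ vanishes is inoffensive because the $1/f_{\mathbf{X}_u}(\mathbf{x}_u)$ carried by the joint density in the preceding display cancels it before any division by $g_{\mathbf{X}_u}$ is performed, so the manipulation proceeds pointwise under the convention. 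No integrability hypothesis beyond the usual ones for importance sampling is required.
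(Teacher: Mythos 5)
Your proposal is correct and follows essentially the same route as the paper's proof of \Cref{fund_is_pf}: you establish the same key factorization of the joint density of $\left(\mathbf{X}_u,\mathbf{X}_{-u},\mathbf{X}_{-u}'\right)$ as $f_{\mathbf{X}}\left(\mathbf{x}_u,\mathbf{x}_{-u}\right)f_{\mathbf{X}}\left(\mathbf{x}_u,\mathbf{x}_{-u}'\right)/f_{\mathbf{X}_u}\left(\mathbf{x}_u\right)$, perform the identical multiply-and-divide change of measure against the $g$-analogue of that triple density, and regroup the ratio into $w_t^g\left(\mathbf{X}\right)w_t^g\left(\mathbf{X}^u\right)g_{\mathbf{X}_u}\left(\mathbf{X}_u\right)/f_{\mathbf{X}_u}\left(\mathbf{X}_u\right)$. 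Your handling of the $0/0=0$ convention matches the remark the paper makes at the start of its appendix, so nothing is missing.
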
 \begin{proof}See \ref{app:proof_lemma_fund_is_pf}.\end{proof}The term in the expectation is a function of the three random variables $\mathbf{X}_u$, $\mathbf{X}_{-u}$ and $\mathbf{X}_{-u}'$ with the correlation structure described above. From \eqref{is_pf}, it is then possible to propose Pick-Freeze estimators by importance sampling of the target conditional index $\text{T-VE}_u$ in both given-model and given-data frameworks.

\subsubsection{Given-model framework with importance sampling}

Here, we make the assumptions of the given-model with importance sampling framework defined in \Cref{sss:MC_ngd}. The writing \eqref{is_pf} suggests then to introduce the following Pick-Freeze estimator of $\text{T-VE}_u$ by importance sampling: \begin{equation}\label{tvengd}\widehat{\text{T-VE}}_{u,\text{PF}}^{\text{IS}} = \dfrac{1}{N_u}\sum_{n=1}^{N_u}w_t^g\left(\mathbf{X}_u^{(n)},\mathbf{X}_{-u}^{(n,1)}\right)w_t^g\left(\mathbf{X}_u^{(n)},\mathbf{X}_{-u}^{(n,2)}\right)\dfrac{g_{\mathbf{X}_u}(\mathbf{X}_u^{(n)})}{f_{\mathbf{X}_u}(\mathbf{X}_u^{(n)})} - \widehat{p}_{t,N}^{\text{IS},\text{ub}} ,\end{equation} where $(\mathbf{X}_{u}^{(n)})_{n \in [\![1,N_u]\!]}$ is an i.i.d. sample from the distribution of $g_{\mathbf{X}_u}$ and where for all $n \in [\![1,N_u]\!]$, $(\mathbf{X}_{-u}^{(n,k)})_{k \in [\![1,2]\!]}$ are two independent random variables from the distribution of  $g_{\mathbf{X}_{-u} | \mathbf{X}_{u} = \mathbf{X}_{u}^{(n)}}$. This then leads to the main result of this sub-section:\begin{proposition}\label{pf_is_ub}
$\widehat{\text{T-VE}}_{u,\text{PF}}^{\text{IS}}$ in \eqref{tvengd} is an unbiased estimator of $\text{T-VE}_u$ and it requires $2N_u$ calls to the function $\phi$ in addition to those to estimate $p_t^2$.
\end{proposition}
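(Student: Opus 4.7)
The plan is to show that $\mathbb{E}_g\bigl[\widehat{\text{T-VE}}_{u,\text{PF}}^{\text{IS}}\bigr] = \text{T-VE}_u$ by combining \Cref{fund_is_pf}, the Pick-Freeze identity \eqref{PF_fiab_dep}, and the unbiasedness of $\widehat{p}_{t,N}^{\text{IS},\text{ub}}$ for $p_t^2$ recalled in \eqref{pt2_is_ub}. The estimator splits naturally into two independent pieces, each of which can be handled on its own.

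First I would recast the sampling scheme in Pick-Freeze form. For each $n \in [\![1,N_u]\!]$, set $\mathbf{Y}^{(n)} = \bigl(\mathbf{X}_u^{(n)},\mathbf{X}_{-u}^{(n,1)}\bigr)$ and $\mathbf{Y}^{u,(n)} = \bigl(\mathbf{X}_u^{(n)},\mathbf{X}_{-u}^{(n,2)}\bigr)$. Since $\mathbf{X}_u^{(n)} \sim g_{\mathbf{X}_u}$ and, conditionally on $\mathbf{X}_u^{(n)}$, the two random vectors $\mathbf{X}_{-u}^{(n,1)}$ and $\mathbf{X}_{-u}^{(n,2)}$ are i.i.d.\ from $g_{\mathbf{X}_{-u}|\mathbf{X}_u = \mathbf{X}_u^{(n)}}$, the pair $\bigl(\mathbf{Y}^{(n)},\mathbf{Y}^{u,(n)}\bigr)$ has exactly the correlation structure $\bigl(\mathbf{X},\mathbf{X}^u\bigr)$ from \eqref{PF_fiab_dep}, but under the auxiliary law $g$ rather than $f_{\mathbf{X}}$. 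This is the only nontrivial identification and it is what makes \Cref{fund_is_pf} directly applicable to each summand.

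Next, I would take expectations under $g$ term-by-term. By \Cref{fund_is_pf} applied with the above identification, for every $n$,
\begin{equation*}
\mathbb{E}_g\!\left[w_t^g\bigl(\mathbf{Y}^{(n)}\bigr)\,w_t^g\bigl(\mathbf{Y}^{u,(n)}\bigr)\,\dfrac{g_{\mathbf{X}_u}\bigl(\mathbf{X}_u^{(n)}\bigr)}{f_{\mathbf{X}_u}\bigl(\mathbf{X}_u^{(n)}\bigr)}\right] = \mathbb{E}_{f_{\mathbf{X}}}\!\left[\psi_t(\mathbf{X})\,\psi_t(\mathbf{X}^u)\right].
\end{equation*}
Since the $N_u$ samples are i.i.d., linearity of expectation gives that the first sum in \eqref{tvengd} has expectation $\mathbb{E}_{f_{\mathbf{X}}}\bigl[\psi_t(\mathbf{X})\,\psi_t(\mathbf{X}^u)\bigr]$. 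Independently, $\mathbb{E}_g\bigl[\widehat{p}_{t,N}^{\text{IS},\text{ub}}\bigr] = p_t^2$ by \eqref{pt2_is_ub} (proved in \ref{app:ub_pt2}). Subtracting and invoking the Pick-Freeze identity \eqref{PF_fiab_dep} yields $\mathbb{E}_g\bigl[\widehat{\text{T-VE}}_{u,\text{PF}}^{\text{IS}}\bigr] = \mathbb{E}_{f_{\mathbf{X}}}\bigl[\psi_t(\mathbf{X})\,\psi_t(\mathbf{X}^u)\bigr] - p_t^2 = \text{T-VE}_u$, as claimed.

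For the cost, each outer iteration requires evaluating $\phi$ at $\mathbf{Y}^{(n)}$ and at $\mathbf{Y}^{u,(n)}$, for a total of $2N_u$ additional calls; the $N$ calls appearing inside $\widehat{p}_{t,N}^{\text{IS},\text{ub}}$ are precisely those already spent estimating $p_t^2$. The main (and only) conceptual step is the distributional identification in the first paragraph: once one checks that $(\mathbf{Y}^{(n)},\mathbf{Y}^{u,(n)})$ satisfies $\mathbf{Y}^{u,(n)}_{-u} \perp\!\!\!\perp \mathbf{Y}^{(n)}_{-u} \mid \mathbf{Y}^{(n)}_u$ and $\mathbf{Y}^{u,(n)}_{-u} \overset{d}{=} \mathbf{Y}^{(n)}_{-u} \mid \mathbf{Y}^{(n)}_u$ under $g$, the rest of the argument reduces to linearity of expectation and two already-established lemmas.
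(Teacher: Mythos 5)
Your proof is correct and follows essentially the same route as the paper's: the paper likewise observes that the first sum in \eqref{tvengd} is an empirical mean of the quantity whose $g$-expectation is identified in \Cref{fund_is_pf}, that $\widehat{p}_{t,N}^{\text{IS},\text{ub}}$ is unbiased for $p_t^2$, and concludes by linearity of expectation together with \eqref{PF_fiab_dep}. The only difference is that you spell out explicitly the distributional identification of $\bigl(\mathbf{Y}^{(n)},\mathbf{Y}^{u,(n)}\bigr)$ with the Pick-Freeze pair under $g$, which the paper leaves implicit.
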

\begin{proof}
The first term in the estimator \eqref{tvengd} is an unbiased estimator of the expectation $\mathbb{E}_g\left(w_t^g\left(\mathbf{X} \right)w_t^g\left(\mathbf{X}^u \right)\dfrac{g_{\mathbf{X}_u}(\mathbf{X}_u)}{f_{\mathbf{X}_u}(\mathbf{X}_u) }\right)$ since it is its empirical mean, and $\widehat{p}_{t,N}^{\text{IS},\text{ub}}$ in \eqref{pt2_is_ub} is an unbiased estimator of $p_t^2$. The linearity of the expectation concludes the proof. \end{proof}

\subsubsection{Given-data framework with importance sampling}

We make here the same assumptions of the given-data with importance sampling framework defined in \Cref{sss:MC_gd}. In particular, an i.i.d. input/output sample $\left(\mathbf{X}^{(n)},\psi_t\left(\mathbf{X}^{(n)}\right)\right)_{n\in[\![1,N]\!]}$ with $\left(\mathbf{X}^{(n)}\right)_{n\in[\![1,N]\!]}$ distributed according to the IS auxiliary distribution $g$ is available. Once more, we will use the nearest-neighbour approximation to extend the Pick-Freeze estimator by importance sampling \eqref{tvengd} of $\text{T-VE}_u$ to the given-data framework and the corresponding estimator is:\begin{equation}\label{tvegd}\widehat{\text{T-VE}}_{u,\text{PF}}^{\text{IS},\text{KNN}} = \dfrac{1}{N_u}\sum_{n=1}^{N_u}w_t^g\left(\mathbf{X}^{(k_N^{u}(s(n),1))}\right)w_t^g\left(\mathbf{X}^{(k_N^{u}(s(n),2))}\right)\dfrac{g_{\mathbf{X}_u}(\mathbf{X}_u^{(s(n))})}{f_{\mathbf{X}_u}(\mathbf{X}_u^{(s(n))})} - \widehat{p}_{t,N}^{\text{IS},\text{ub}},\end{equation} where $\left(s(n)\right)_{n\in[\![1,N_u]\!]}$ is a sample of uniformly distributed integers in $[\![1,N]\!]$ and where $k_N^{u}$ is defined in \Cref{sss:2.2.3}. This estimator does not require any additional call to the function $\phi$ to those from the already available i.i.d. sample and more generally, all the remarks previously made in \Cref{sss:MC_gd} about the double Monte Carlo given-data estimator by importance sampling \eqref{tevgd} of $\text{T-EV}_u$ are still valid. 

\begin{remark}\label{remark:3.1}
In all the above estimators \eqref{tevngd} \eqref{tevgd} \eqref{tvengd} and \eqref{tvegd}, for $v\in\lbrace u ,-u \rbrace$, it can be possible to draw a sample $\mathbf{X}_v^{(i)}$ according to the IS auxiliary distribution $g_{\mathbf{X}_v}$ such that $f_{\mathbf{X}_v}\left(\mathbf{X}_v^{(i)}\right) = 0$ which is at the denominator. However, this implies that for all $\mathbf{x}_{-v} \in \mathbb{X}_{-v}$, $f_{\mathbf{X}}\left(\mathbf{X}_v^{(i)},\mathbf{x}_{-v}\right) = 0$ and then $w_t^g\left(\mathbf{X}_v^{(i)},\mathbf{x}_{-v}\right) = 0$. Thus, the convention $0/0=0$ adopted at the beginning of the section allows to set as $0$ the term corresponding to this sample in all the previous estimators.
\end{remark}

\subsection{From reliability analysis to reliability-oriented sensitivity analysis}
\label{ss:3.3}

In practice, the ROSA of a complex system always comes after the reliability analysis, i.e. the estimation of the failure probability. When the latter has been done with importance sampling, we already have at our disposal a sub-optimal auxiliary PDF $g$ close to the optimal one $g_{\text{opt}}\left(\mathbf{x}\right)\propto\mathbf{1}\left(\phi\left(\mathbf{x}\right)>t\right)f_{\mathbf{X}}\left(\mathbf{x}\right)$ (see \Cref{ss:is}), as well as an i.i.d. $N$-sample distributed according to it. In order to check if it is beneficial to re-use the available data from the reliability analysis to estimate the target conditional indices and thus the target Shapley values, let us consider the variance of the previous new estimators when the auxiliary density is $g_{\text{opt}}$. Since it is based on a double loop, it may be complicated to write the variance of the double Monte-Carlo estimator \eqref{tevngd} in closed form, but this is feasible for the Pick-Freeze estimator \eqref{tvengd} since it is an empirical mean. Considering $N_u$ as fixed, we have (see \ref{app:proof_eq_var_pf_gopt}): \begin{equation}\label{eq_var_pf_gopt}
    \mathbb{V}_{g_{\text{opt}}}\left(\widehat{\text{T-VE}}_{u,\text{PF}}^{\text{IS}} \right) \leq \dfrac{p_t^2}{N_u} \propto p_t^2.
\end{equation} 

This result does not prove that $g_{\text{opt}}$ is the optimal IS auxiliary distribution to estimate the target closed Sobol index $\text{T-VE}_u$ from \eqref{tvengd}, but it proves that using $g_{\text{opt}}$ as the IS auxiliary density improves its estimation in comparison to its existing estimators without importance sampling in the regime where $p_t \to 0$. Indeed, letting $\widehat{\text{T-VE}}_{u,\text{PF}}$ be the unbiased estimator by Pick-Freeze without importance sampling of $\text{T-VE}_u$, defined as in \eqref{pf} with $\phi$ replaced by $\psi_t$, we have (see \ref{app:proof_eq_var_pf_fx}): \begin{equation}\label{eq_var_pf_fx}\mathbb{V}_{f_{\mathbf{X}}}\left(\widehat{\text{T-VE}}_{u,\text{PF}} \right) \geq \frac{1}{N_u}\left(\mathbb{V}_{f_{\mathbf{X}}}\left[\mathbb{E}_{f_{\mathbf{X}}}\left(\psi_t(\mathbf{X}) | \mathbf{X}_u\right)\right] + p_t^2\right)\left(1-\left(\mathbb{V}_{f_{\mathbf{X}}}\left[\mathbb{E}_{f_{\mathbf{X}}}\left(\psi_t(\mathbf{X}) | \mathbf{X}_u\right)\right] + p_t^2\right)\right).\end{equation} Thus, at best, when $\phi$ depends only on $\mathbf{X}_{-u}$, we have $\mathbb{V}_{f_{\mathbf{X}}}\left[\mathbb{E}_{f_{\mathbf{X}}}\left(\psi_t(\mathbf{X}) | \mathbf{X}_u\right)\right] = 0$ and then $\mathbb{V}_{f_{\mathbf{X}}}\left(\widehat{\text{T-VE}}_{u,\text{PF}} \right) \geq p_t^2\left(1-p_t^2\right)/N_u \propto p_t^2\left(1-p_t^2\right) \underset{p_t\to 0}{\sim} p_t^2$. At worst, when $\phi$ depends only on $\mathbf{X}_{u}$, we have $\mathbb{V}_{f_{\mathbf{X}}}\left[\mathbb{E}_{f_{\mathbf{X}}}\left(\psi_t(\mathbf{X}) | \mathbf{X}_u\right)\right] = p_t\left(1-p_t\right)$ and then $\mathbb{V}_{f_{\mathbf{X}}}\left(\widehat{\text{T-VE}}_{u,\text{PF}} \right) \geq p_t\left(1-p_t\right)/N_u \propto p_t\left(1-p_t\right)\underset{p_t\to 0}{\sim} p_t$. Since $p_t$ is the probability of a rare event, $p_t \ll 1$ and so $p_t^2\ll p_t$, thus using $g_{\text{opt}}$ as the IS auxiliary density improves the estimation of all the target closed Sobol indices in the regime where $p_t \longrightarrow 0$, and then of the $d$ target Shapley effects. 

However, in practice, the available sample is not drawn exactly from $g_{\text{opt}}$ but from an IS auxiliary distribution $g$ close to $g_{\text{opt}}$, but we can still expect a significant improvement in the estimation of the target closed Sobol indices. Hence, this theoretical analysis supports the intuition that it is beneficial to re-use the available data from the reliability analysis to estimate the target Shapley effects. 
\begin{remark}\label{remark:3.2}
The input domain $\mathbb{X}$ is not necessarily equal to $\mathbb{R}^d$. Nevertheless, it can be practically convenient to use an IS auxiliary distribution supported on $\mathbb{R}^d$ (or more generally on a subset of $\mathbb{R}^d$ strictly greater than $\mathbb{X}$) such as a normal distribution for example. To that end, the solution adopted in this article consists in extending the input domain of $\phi$ and $f_{\mathbf{X}}$ on $\mathbb{R}^d$ by setting $\phi\left(\mathbf{x}\right) = 0$ and $f_{\mathbf{X}}\left(\mathbf{x}\right) = 0$ for any $\mathbf{x}\in\mathbb{R}^d\backslash\mathbb{X}$. The convention $0/0=0$ allows then to ignore the samples drawn by $g$ on $\mathbb{R}^d\backslash\mathbb{X}$ during the estimation process (see \Cref{remark:3.1}). However, if the reliability analysis has been done efficiently, the IS auxiliary distribution $g$ should be close to $g_{\text{opt}}\left(\mathbf{x}\right) \propto \mathbf{1}\left(\phi\left(\mathbf{x}\right)>t\right)f_{\mathbf{X}}(\mathbf{x)}$ and therefore very few samples should be drawn in $\mathbb{R}^d\backslash\mathbb{X}$.
\end{remark}

\section{Numerical applications}
\label{sec:numerical_experiments}

In order to illustrate the practical interest of the previous efforts, this section aims to evaluate numerically the performances of the suggested estimators of the target Shapley effects on various test functions with correlated input variables and to compare them to the performances of the existing estimators. The code to reproduce the numerical experiments is publicly available at: \url{https://github.com/Julien6431/Target-Shapley-effects.git}

In the following examples, as explained in \Cref{ss:3.3}, we will consider that the reliability analysis has already been done, i.e. that an IS auxiliary density $g$ close to $g_{\text{opt}}$ has been determined. The present article does not aim to compare different importance sampling techniques, therefore we choose here to compute the IS auxiliary PDF $g$ only by adaptive importance sampling with the cross-entropy algorithm \cite{rubinstein2013cross}, from one of the two following IS parametric families: the Gaussian distributions (single Gaussian, IS-SG) \cite{rubinstein2013cross} and the Gaussian mixture (IS-GM) distributions~\cite{geyer2019cross}. 

Moreover, the dimension of the following problems will be low or moderate, therefore only the subset aggregation procedure will be used, and we adopt then the following numerical parameters:\begin{itemize}
    \item $N_{tot} = 2\times 10^4$ which represents the total number of calls to $\phi$
    \item $N_I = 3$ in \Cref{sec:linear_gaussian,ss:cantilever_beam} as suggested in \cite{song2016shapley,broto2020variance}, $N_I=2$ in \Cref{ss:fire_spread}, the size of the inner loop in the double Monte Carlo estimator
    \item $N_{\mathbb{V}} = 10^4$ the size of the sample to estimate $\mathbb{V}\left(\psi_t\left(\mathbf{X}\right)\right)$ in the given-model framework
    \item in the given-model framework, for $u \in \mathcal{P}(d)\backslash\lbrace\varnothing,[\![1,d]\!]\rbrace$, $N_u = N_O = \left\lfloor\left(N_{tot} - N_{\mathbb{V}}\right)\left(N_I(2^d-2)\right)^{-1}\right\rfloor$ with the double Monte Carlo estimators and $N_u = N_O = \left\lfloor\left(N_{tot} - N_{\mathbb{V}}\right)\left(2(2^d-2)\right)^{-1}\right\rfloor$ with the Pick-Freeze estimators, in order to reach $N_{tot}$ calls or less to $\phi$ (according to both expressions of $N_{tot}$ given in \Cref{ss:aggreg})
    \item in the given-data framework, for all $u \in \mathcal{P}(d)\backslash\lbrace\varnothing,[\![1,d]\!]\rbrace$, $N_u = N_O = 10^3$
    \item $n_{rep} = 200$ realisations of each estimator to represent the results as boxplots.
\end{itemize}
For different reasons described in more details in \ref{app:standardisation}, the preprocessing procedure presented in \ref{ss:preprocessing} will be applied as soon as a given-data estimator will be used. It is based on \Cref{invariance_bij} stated by~\cite{owen2017shapley}. 

In the following, results will be presented as boxplots. Let us define first the acronyms that will be used in the legends: \begin{itemize}
\item \textit{IS-SG} refers to estimators with importance sampling using an auxiliary distribution in the single Gaussian family determined by the cross-entropy algorithm,
\item \textit{IS-GM} refers to estimators with importance sampling using an auxiliary distribution in the Gaussian mixture family determined by the cross-entropy algorithm. \end{itemize} Then, each box extends from the first to the third quartile values of the data, with a line at the median. The whiskers extend from the box to show the range of the data, and flier points are those past the end of the whiskers.

\subsection{Gaussian linear case}
\label{sec:linear_gaussian}

First, let us consider the simple Gaussian linear example. For $d\geq2$ and a vector ${\boldsymbol{\beta}} \in \mathbb{R}^d\backslash\lbrace0\rbrace$, let us define the $d$-dimensional linear function $\phi_{\boldsymbol{\beta}}$ by: \begin{equation}
\begin{array}{l|rcl}
\phi_{\boldsymbol{\beta}}: & \mathbb{R}^d & \longrightarrow & \mathbb{R} \\
    & \mathbf{x} & \longmapsto & {\boldsymbol{\beta}}^\top \mathbf{x}.
    \end{array}
\end{equation} Then, the input vector $\mathbf{X}$ is assumed normally distributed with mean vector $\boldsymbol{\mu} \in \mathbb{R}^d$ and covariance matrix $\mathbf{\Sigma} \in \mathcal{M}_d\left(\mathbb{R}\right)$ which is symmetric positive-definite, i.e. $\mathbf{X} \sim \mathcal{N}_d\left(\boldsymbol{\mu},\mathbf{\Sigma}\right)$. Moreover the covariance matrix $\mathbf{\Sigma}$ is here considered non-diagonal in order to include dependence between the input variables. Finally, since the failure domain is here clearly located in one region of $\mathbb{R}^d$, only Gaussian auxiliary IS distributions will be considered.

The parameters of the toy case are specified by $\boldsymbol{\beta}  =  \begin{pmatrix}
1 & 1 & 1
\end{pmatrix}^\top$, $\boldsymbol{\mu}  =  \begin{pmatrix}
0 & 0 & 0
\end{pmatrix}^\top$, $\mathbf{\Sigma}  =  \begin{pmatrix}
1 & 0 & 0\\
0 & 1 & -0.3 \\
0 & -0.3 & 1
\end{pmatrix}$ and $t=4$. The failure threshold is set at $t=4$ such that the failure probability is $p_t^{\boldsymbol{\beta}}\approx 4.9\times 10^{-3}$. Reference values of the target Shapley effects will be computed with \eqref{soboltheo}, \eqref{proba_theo} and the definition in \eqref{shapley}. The performances of the estimators with and without importance sampling in both given-model and given-data frameworks are compared graphically respectively in \Cref{fig:gl_d3_ngd} and \Cref{fig:gl_d3_gd} (recall that the acronyms are defined above). As expected, in both frameworks, when the IS auxiliary density is adapted to the problem, the estimators with importance sampling give a much better estimation of the target Shapley effects when the failure probability is small. Despite few outliers, the corresponding boxplots have a much smaller stretch and they are centered on the reference values. This first example highlights then that importance sampling has a huge positive impact on the quality of the estimation of the target Shapley effects with a low failure probability. \begin{figure}[!tbp]
  \centering
  \begin{minipage}[b]{0.47\textwidth}
    \includegraphics[width=\textwidth]{Images/3d_gaussian_linear_ngd.png}
    \caption{Estimation of the target Shapley effects in the $3$-dimensional Gaussian linear example in the given-model framework.}
    \label{fig:gl_d3_ngd}
  \end{minipage}
  \hfill
  \begin{minipage}[b]{0.47\textwidth}
    \includegraphics[width=\textwidth]{Images/3d_gaussian_linear_gd.png}
    \caption{Estimation of the target Shapley effects in the $3$-dimensional Gaussian linear example in the given-data framework.}
    \label{fig:gl_d3_gd}
  \end{minipage}
\end{figure}

\subsection{Cantilever beam}
\label{ss:cantilever_beam}

\subsubsection{Presentation of the model}

The second example is a real structure engineering problem which is presented in \cite{zhou2014moment,baoyu2017reliability}. Consider a rectangular cantilever beam structure. The dimensional parameters of the beam are denoted $l_X$, $l_Y$ and $L$. The elastic modulus of the structure is represented by $E$. Two random forces $F_X$ and $F_Y$ are exerted on the tip of the section. The goal function is then the maximum vertical displacement of the tip section, which can be given analytically according to the previous parameters by: \begin{equation}
    \phi\left(F_X,F_Y,E,l_X,l_Y,L\right) = \dfrac{4L^3}{El_Xl_Y}\sqrt{\left(\dfrac{F_X}{l_X^2}\right)^2+\left(\dfrac{F_Y}{l_Y^2}\right)^2}.
\end{equation} The maximum vertical displacement allowed is $t=0.066 \ \mathrm{m}$, which is then the failure threshold of the reliability problem.

\begin{table}[h!]
\centering
\begin{tabular}{|c | l l l l|} 
 \hline
 & Input variable & Distribution & Mean & Coefficient of variation \\ [0.5ex] 
 \hline\hline
 1 & $F_X$ & LogNormal & $556.8 \ \mathrm{N}$ & $0.08$ \\ 
 2 & $F_Y$ & LogNormal & $453.6 \ \mathrm{N}$ & $0.08$ \\
 3 & $E$ & LogNormal & $200.10^9 \ \mathrm{Pa} $ & $0.06$ \\
 4 & $l_X$ & Normal & $0.062 \ \mathrm{m}$ & $0.1$ \\
 5 & $l_Y$ & Normal & $0.0987 \ \mathrm{m}$ & $0.1$ \\ 
 6 & $L$ & Normal & $4.29 \ \mathrm{m}$ & $0.1$ \\[1ex] 
 \hline
\end{tabular}
\caption{Distributions of each input variable of the cantilever beam example.}
\label{table:distr_cantiler_beam}
\end{table}The distributions of each input variable are listed in \Cref{table:distr_cantiler_beam}. Moreover, the dimensional variables are linearly dependent through the following Pearson correlation coefficients: \begin{equation}
    \rho_{l_X,l_Y} = -0.55 \mbox{ and } \rho_{L,l_X} = \rho_{L,l_Y} = 0.45.
\end{equation} 

We do not know anything about the form of the failure domain, therefore both Gaussian and Gaussian mixture IS auxiliary distributions will be used in order to evaluate the influence of the IS auxiliary density on the estimation of the target Shapley effects with importance sampling. First, we will compare the performances of the given-data estimators, according to the framework described in \Cref{ss:3.3}. Then, in order to evaluate graphically the error due to the nearest neighbour approximation, we will evaluate the performances of the given-model estimators with importance sampling with a Gaussian IS auxiliary distribution. Since the input vector $\mathbf{X} = \left(F_X,F_Y,E,l_X,l_Y,L\right)^\top$ is not Gaussian, we will not generate samples according to the input conditional distributions and thus we will not evaluate the performances of the existing given-model estimators without importance sampling.

\subsubsection{Procedure to obtain the reference values}

A Monte Carlo estimation with a sample of size $N = 10^6$ gives a reference value for the failure probability: $p_t^{\text{ref}} \approx 1.5\times10^{-2}$. Next, one can remark that a random variable following a LogNormal distribution can be written as a bijective transformation of a Gaussian random variable. Then, thanks to \Cref{invariance_bij}, we decide to transform the input random variable $\mathbf{X} = \left(F_X,F_Y,E,l_X,l_Y,L\right)$ into a $6$-dimensional Gaussian random vector with the correct mean vector and covariance matrix. At last, reference values of the target Shapley effects are computed using the double Monte Carlo existing estimator in \eqref{dmc} combined with the non-given data sampling procedure described in \ref{sss:3.3.2} with the input distribution and finally, with $N = 10^6$, $N_O = 10^6$ and $N_I=3$, we obtain the reference values presented in \Cref{tab:ref_values_TSA_cantilever_beam}. \begin{table}[]
    \centering
    \begin{tabular}{|c|c|c|c|c|c|}
   \hline
   $\text{T-Sh}_1^{\text{ref}}$ & $\text{T-Sh}_2^{\text{ref}}$ &$\text{T-Sh}_3^{\text{ref}}$ &$\text{T-Sh}_4^{\text{ref}}$ &$\text{T-Sh}_5^{\text{ref}}$ &$\text{T-Sh}_6^{\text{ref}}$ \\[0.5ex] 
   \hline\hline
   0.146&0.001&0.103&0.282&0.254&0.214\\
   \hline
\end{tabular}
    \caption{Reference values of the target Shapley effects in the cantilever beam problem.}
    \label{tab:ref_values_TSA_cantilever_beam}
\end{table}

\subsubsection{Numerical results}

The results of the ROSA of the cantilever beam problem in the given-data and given-model frameworks are respectively given in \Cref{fig:cantilever_beam_std,fig:cantilever_beam_ngd}. Once more, when the IS auxiliary density is adapted to the problem, the estimators with importance sampling give a better estimation of the target Shapley effects than the existing estimators for the same reasons as in the previous example. In addition, unreported numerical simulations provide that the failure domain is here located in one region of the input space which explains why both Gaussian and Gaussian mixture IS auxiliary densities provide similar performances. Both figures show that the dimensional parameters of the beam are the most influential inputs on the occurrence of the failure and show that the estimators with importance sampling provide the same hierarchy of importance of the inputs as the reference values of the target Shapley effects whereas the existing estimators without importance sampling switch the importance of $\text{T-Sh}_4$ and $\text{T-Sh}_5$.

In addition, one can remark that the dispersion of each given-model boxplot in \Cref{fig:cantilever_beam_ngd} is bigger than the dispersion of each given-data boxplot in \Cref{fig:cantilever_beam_std}. In fact, in both cases, the maximal number of calls to $\phi$ allowed is $N_{tot} = 2\times 10^4$. In the given-model framework, it is necessary to choose the parameters $N_I$, $N_O$ and $N_{\mathbb{V}}$ such that we exactly reach $N_{tot}$ calls to the function, whereas in the given-data framework, we are free to chose $N_I$ and $N_O$ as we want because we already have the $N_{tot}$-sample at our disposal. In both frameworks, the value of $N_I$ is already fixed. As a consequence, in the given-model framework, the value of $N_O$ must be equal to the one given in the introduction of Section 4, which is always lower than $10^2$ in each numerical example here, whereas in the given-data framework, we choose $N_O=10^3$. This gap between the value of $N_O$ in both frameworks explains the larger dispersion observed in the given-model algorithms.

Moreover, \Cref{fig:cantilever_beam_std} illustrates a problem already mentioned in \Cref{sss:MC_gd}. On some indices in \Cref{fig:cantilever_beam_std}, there is a gap between the boxplot median and the reference value whereas the boxplots of the given-model estimators with importance sampling presented in \Cref{fig:cantilever_beam_ngd} are centered on the reference values. Indeed, when the dimension increases, distances between points tend to become larger and thus the nearest neighbour approximations of the conditional distributions are getting less accurate. This phenomenon might create a bias in the estimation of the target Shapley effects with the given-data estimators when the dimension increases. However, one can remark on \Cref{fig:cantilever_beam_std} that importance sampling seems to reduce this error. Indeed, without importance sampling, the points of interest, the failure points, are in the tail of the distribution, where the concentration of points is small and thus where the distances between points are larger than on average, which is not the case with importance sampling when the auxiliary distribution is adapted to the problem. This is another advantage of using importance sampling to estimate the target Shapley effects. Finally, as seen in \Cref{fig:cantilever_beam} in \ref{app:standardisation}, the preprocessing introduced and described in \ref{ss:preprocessing} seems as well to reduce significantly the error.

\begin{figure}
    \centering
    \includegraphics[width=.6\textwidth]{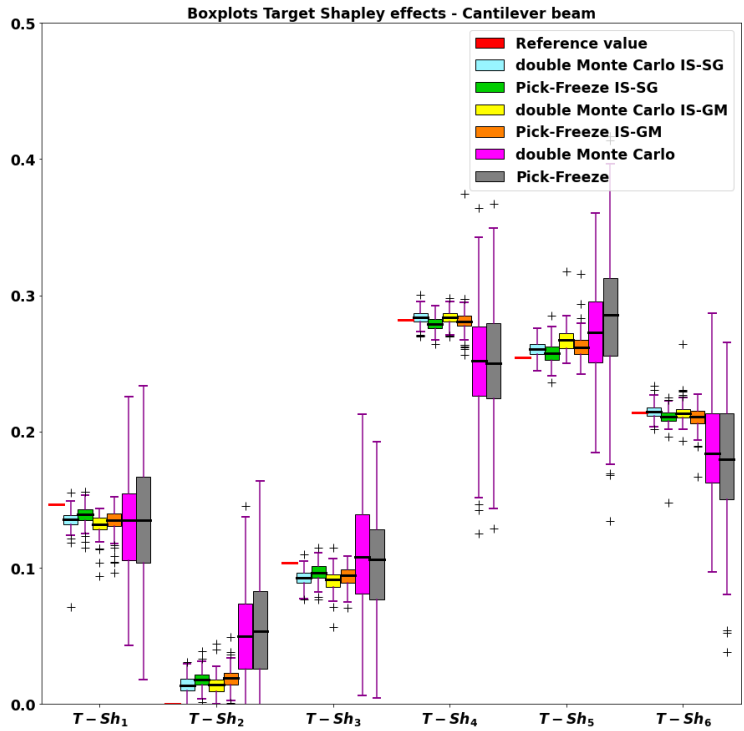}
    \caption{Estimation of the target Shapley effects in the cantilever beam example, in the given-data framework and with the preprocessing described in \ref{ss:preprocessing}.}
    \label{fig:cantilever_beam_std}
\end{figure}

\begin{figure}
    \centering
    \includegraphics[width=.6\textwidth]{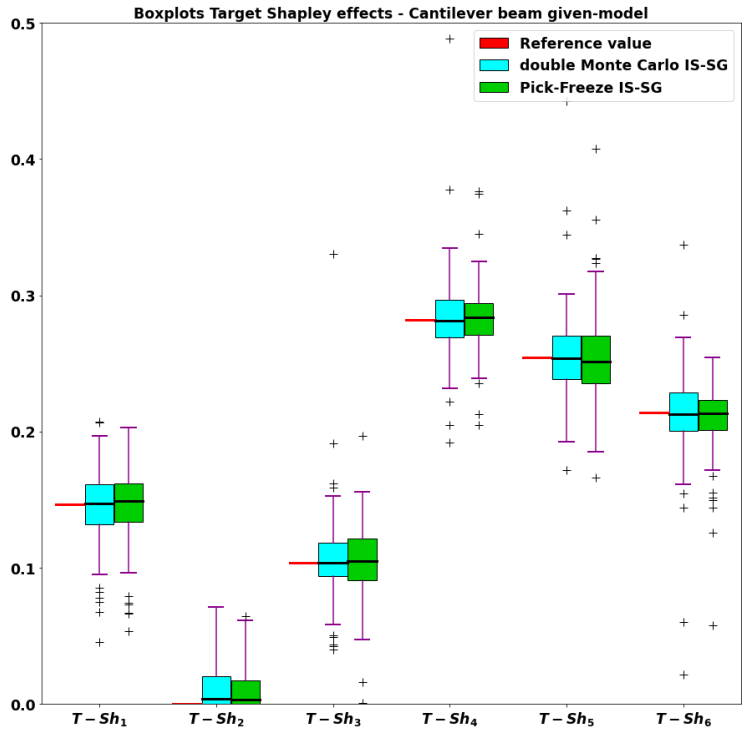}
    \caption{Estimation of the target Shapley effects in the cantilever beam example, in the given-model framework.}
    \label{fig:cantilever_beam_ngd}
\end{figure}

\subsection{Fire spread}
\label{ss:fire_spread}
\subsubsection{Presentation of the model}

The Rothermel's model introduced in \cite{rothermel1972mathematical} is a semi-physical model which aims at modeling the spread of forest fires. It has two main outputs: the rate of spread of a point in the fire front ($R$ given in $\mathrm{cm\cdot s}^{-1}$) and the reaction intensity ($I_R$ given in $\mathrm{kW\cdot m}^{-2}\cdot \mathrm{s}^{-1}$). The initial equations from \cite{rothermel1972mathematical} have been modified several times since their introduction and in the present study, we adopt the point of view of \cite{salvador2001global,song2016shapley}. In the same way, we take into account the modifications of \cite{albini1976estimating} on the net fuel loading and the optimum reaction velocity and the modifications of \cite{catchpole1991modelling} on the moisture damping coefficient and the heat preignition. In addition, we adopt as well the modifications of the marginal input distributions introduced and explained in \cite{song2016shapley}.

The model considered in the present article has $10$ input variables grouped in the random input vector $\mathbf{X} = \left(\delta,\sigma,h,\rho_p,m_l,m_d,S_T,U,\mathrm{tan} \ \varphi,P\right)$ and whose physical meanings as well as their marginal distributions are given in \Cref{table:distr_fire_spread}. \begin{table}[h!]
\centering
\begin{tabular}{|c|m{5.95cm} l l|} 
 \hline
 &Input variable & Symbol and unit & Distribution \\ [0.5ex] 
 \hline\hline
 1&Fuel depth & $\delta$ ($\mathrm{cm}$) & $\mathrm{LogN}\left(2.19,0.517\right)$  \\ 
 2&Fuel particle area-to-volume ratio & $\sigma$ ($\mathrm{cm}^{-1}$) & $\mathrm{LogN}\left(3.31,0.294\right)$ \\
 3&Fuel particle low heat content & $h$ ($\mathrm{Kcal\cdot kg}^{-1}$) & $\mathrm{LogN}\left(8.48,0.063\right)$  \\
 4&Oven-dry particle density & $\rho_p$ ($\mathrm{D\cdot W\cdot g \cdot cm}^{-3}$) & $\mathrm{LogN}\left(-0.592,0.219\right)$  \\
 5&Moisture content of the live fuel & $m_l$ ($\mathrm{H_2OgD\cdot W\cdot g}^{-1}$) & $\mathrm{N}\left(1.18,0.377\right)$  \\ 
 6&Moisture content of the dead fuel & $m_d$ ($\mathrm{H_2OgD\cdot W\cdot g}^{-1}$) & $\mathrm{N}\left(0.19,0.047\right)$ \\
 7&Fuel particle total mineral content & $S_T$ ($\mathrm{MIN\cdot gD\cdot W\cdot g}^{-1}$) & $\mathrm{N}\left(0.049,0.011\right)$ \\
 8&Wind speed at midflame height & $U$ ($\mathrm{km\cdot h}^{-1}$) & $6.9\mathrm{LogN}\left(1.0174,0.5569\right)$  \\
 9&Slope & $\mathrm{tan} \ \varphi$ & $\mathrm{N}\left(0.38,0.186\right)$  \\
 10&Dead fuel loading to total fuel loading & $P$ & $\mathrm{LogN}\left(-2.19,0.64\right)$  \\[1ex] 
\hline
\end{tabular}
\caption{Distributions of each input variable of the fire spread example. D.W.: dry weight - MIN: mineral weight - $\mathrm{N}\left(\mu,\sigma\right)$: the 1-dimensional normal distribution with mean $\mu\in\mathbb{R}$ and standard deviation $\sigma>0$ -  $a\mathrm{LogN}\left(\mu,\sigma\right)$: the distribution of $a\exp\left(A\right)$ with $A$ a 1-dimensional normal random variable of mean $\mu\in\mathbb{R}$ and standard deviation $\sigma>0$.}
\label{table:distr_fire_spread}
\end{table} Moreover, real observations in \cite{clark2008sensitivity} highlight a negative correlation between the moisture content of dead fuel $m_d$ and the wind speed $U$, i.e. the windier it is, the less moisture the dead fuels contain. We suppose here that the correlation is strong and that it is specified by the following Pearson correlation coefficient:\begin{equation}\label{rho_fire}
        \rho_{m_d,U} = -0.8.
\end{equation} The joint distribution of $\left(m_d,U\right)$ is then based on a Gaussian copula. Finally, in order to ensure the physical consistency of the model, the following rules are adopted:  \begin{itemize}
    \item all negative values of any input variable are rejected
    \item all values of $S_T$ and $P$ over $1$ are rejected
    \item all values of $m_d$ lower than $3/0.6$ are rejected since this value is the smallest possible surface area to volume ratio for fuels with a diameter less than $6$mm.
\end{itemize} This means that the input distribution is given by the distribution defined by \Cref{table:distr_fire_spread} and \eqref{rho_fire}, conditioned to the fact that none of the above rules lead to a rejection. In practice, we use truncated distributions in order to handle the inputs numerically. To sum up, given the random input vector $\mathbf{X} = \left(\delta,\sigma,h,\rho_p,m_l,m_d,S_T,\right.\allowbreak \left. U,\mathrm{tan} \ \varphi,P\right)$, the rate of spread is obtained through the system of equations in \ref{app:fire_spread_eqs}. Finally, the critical threshold of the rate of spread is arbitrarily set to $t=60 \ \mathrm{cm\cdot s}^{-1}$.

\subsubsection{Reference values}

A Monte Carlo estimation with a sample of size $N = 10^7$ gives a reference value for the failure probability: $p_t^{\text{ref}} \approx 1.4\times 10^{-4}$. Next, since the random input vector $\mathbf{X} = \left(\delta,\sigma,h,\rho_p,m_l,m_d,S_T,U,\mathrm{tan} \ \varphi,P\right)$ is composed of normal and LogNormal random variables, as in the previous example, we transform it into a $10$-dimensional Gaussian random vector with the correct mean vector and covariance matrix. At last, reference values of the target Shapley effects are computed using the double Monte Carlo existing estimator in \eqref{dmc} combined with the non-given data sampling procedure described in \ref{sss:3.3.2} with the input distribution and finally, with $N=10^7$, $N_O = 10^6$ and $N_I = 3$, we obtain the reference values presented in \Cref{tab:ref_values_TSA_fire_spread}. 

\begin{table}[]
    \centering
    \begin{tabular}{|c|c||c|c||c|c||c|c||c|c|}
   \hline
   $\text{T-Sh}_1^{\text{ref}}$ & 0.152 &$\text{T-Sh}_2^{\text{ref}}$ & 0.247 &$\text{T-Sh}_3^{\text{ref}}$ & 0.011 & $\text{T-Sh}_4^{\text{ref}}$ &0.003  &$\text{T-Sh}_{5}^{\text{ref}}$ & 0.162 \\[0.5ex] 
   \hline\hline
   $\text{T-Sh}_6^{\text{ref}}$ & 0.145&$\text{T-Sh}_7^{\text{ref}}$ & 0.016&$\text{T-Sh}_8^{\text{ref}}$ & 0.182& $\text{T-Sh}_9^{\text{ref}}$ & 0.009&$\text{T-Sh}_{10}^{\text{ref}}$ &0.073  \\
   \hline
\end{tabular}
    \caption{Reference values of the target Shapley effects in the fire spread problem.}
    \label{tab:ref_values_TSA_fire_spread}
\end{table}

\subsubsection{Numerical results}

The results of the ROSA of the fire spread problem in the given-data framework with $N_{tot} = 2\times 10^4$ are given in \Cref{fig:fire_spread}. First, since the failure probability is around $p_t^{\text{ref}} \sim 10^{-4}$, the existing estimators without importance sampling with samples of size $N_{\text{tot}} = 2\times 10^4$ drawn according to the input distribution return a value very close to $0$ almost every time because there are too few failure points. Thus, for the sake of conciseness, it is not worthy to show their boxplots. Second, unreported numerical simulations provide that the failure domain is once more located in one region of the input space and explain why both Gaussian and Gaussian mixture IS auxiliary densities provide similar performances.

\Cref{fig:fire_spread} shows the practical interest of the previous efforts on a real semi-physical model. Indeed, when the IS auxiliary density is adapted to the problem, the performances of the suggested estimators with importance sampling are satisfying because they have a low variance and a moderate bias. Note nevertheless that the hierarchy of importance of the inputs is not exactly the same for the reference values of the target Shapley effects. In contrast, the existing estimators can not give meaningful results as explained above. Comparing both results presented in \Cref{fig:fire_spread} and in \cite{song2016shapley}, one can remark that on the fire spread example, the five most influential inputs on the variability of the output $R$ and on the variability of the random variable $\mathbf{1}\left(R>60\right)$ are the same: the fuel depth $\delta$, the fuel particle area-to-volume ratio $\sigma$, the moisture contents of the live and dead fuel $m_l$ and $m_d$, and the wind speed at midflame height $U$.

Moreover, this test case highlights the importance of the choice of the size of the inner loop $N_I$ for the double Monte Carlo estimator in the given-data framework when the dimension is getting higher. Indeed, as in the previous examples, we first applied the double Monte Carlo estimator with the value $N_I=3$ as suggested in \cite{song2016shapley,broto2020variance}. However, the estimations were inaccurate: the variance of each estimator was extremely high and the estimation of each target Shapley effect was very often over $10^{40}$ in absolute value whereas the effect should theoretically lie between $0$ and $1$. Unreported numerical tests show that this phenomenon is getting even worse when $N_I$ increases. Recalling that $N_I$ is the number of nearest neighbours to find in the given-data framework, the origin of this problem seems to be once more the nearest neighbour approximation, which is getting less accurate when the dimension increases. With importance sampling, the error does not only come from the gap between the target value $\psi_t\left(\mathbf{X}^{(n_0)}_{u},\mathbf{X}^{(k_N^u(n_0,k))}_{-u}\right)$ and its approximation $\psi_t\left(\mathbf{X}^{(k_N^u(n_0,k))}\right)$ (for $n_0 \in [\![1,N]\!]$ and $k\in[\![1,N_I]\!]$) but also from the gap between the likelihood ratios evaluated in both points $f_{\mathbf{X}}\left(\mathbf{X}^{(k_N^u(n_0,k))}\right)\left/g\left(\mathbf{X}^{(k_N^u(n_0,k))}\right)\right.$ and $f_{\mathbf{X}}\left(\mathbf{X}^{(n_0)}_{u},\mathbf{X}^{(k_N^u(n_0,k))}_{-u}\right)\left/g\left(\mathbf{X}^{(n_0)}_{u},\mathbf{X}^{(k_N^u(n_0,k))}_{-u}\right)\right.$, which might become extremely large when the approximation is not accurate. To decrease this error, we hence decided to reduce the size of the inner loop to $N_I=2$ such that the double Monte Carlo algorithm has to find as many neighbours as in the Pick-Freeze algorithm, and the corresponding results presented in \Cref{fig:fire_spread} are much more satisfying.

\begin{figure}
    \centering
    \includegraphics[width=.6\textwidth]{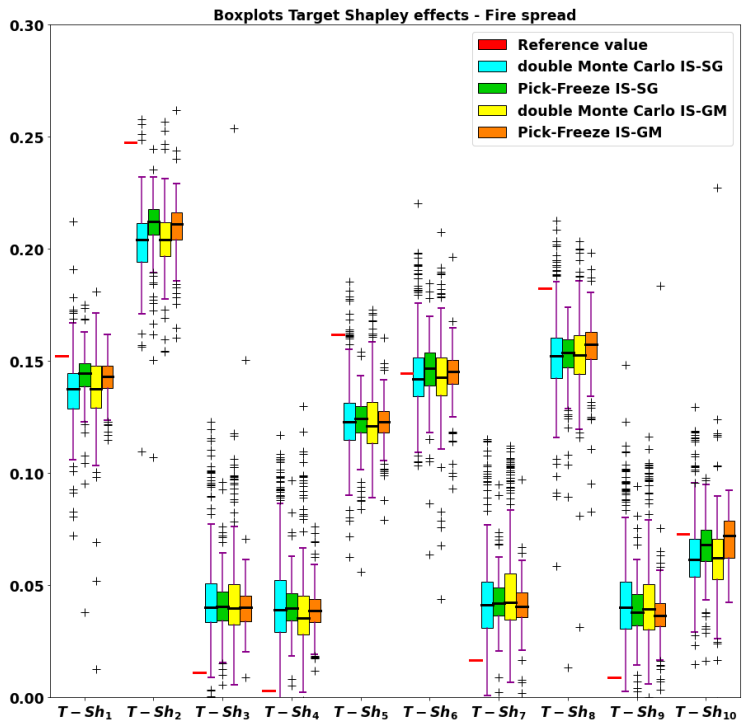}
    \caption{Estimation of the target Shapley effects in the fire spread example, in the given-data framework, with the preprocessing described in \ref{ss:preprocessing} and with $N_I = 2$.}
    \label{fig:fire_spread}
\end{figure}

In order to evaluate the error due to the nearest neighbour approximation, \Cref{fig:fire_spread_20000} represents the estimated ROSA indices for the fire spread problem in the given-model framework with $N_{tot} = 2\times 10^4$ and the numerical parameters from the beginning of \Cref{sec:numerical_experiments}. In contrast to the given-data results presented in \Cref{fig:fire_spread}, the reference value of each target Shapley effect is included in its corresponding boxplots and the boxplot medians are close to the reference value. The dispersion of the boxplots is nevertheless relatively large. \Cref{fig:fire_spread_100000} provides a deeper analysis and represents graphically the performances of the given-data and given-model estimators with importance sampling with samples of size $N_{tot} = 10^5$, with $N_{\mathbb{V}} = 10^4$ and $N_O$ as specified in the beginning of \Cref{sec:numerical_experiments} using the new value of $N_{tot}$ in the given-model framework. The given-model estimators provide the same hierarchy of importance of the inputs as the reference values and for each index, the boxplot medians are almost centered on the reference value, with moderate dispersion. In contrast, even if the bias of the given-data estimators seems to be smaller with larger samples, there is still a gap between the boxplot medians and the reference values, and the hierarchy of importance of the inputs provided is not exactly the same as the reference values. These observations reaffirm on an example in dimension $10$ the effect of the nearest neighbour approximation on the given-data estimators for the estimation of the target Shapley effects.

\begin{figure}[!tbp]
  \centering
  \begin{minipage}[b]{0.47\textwidth}
    \includegraphics[width=\textwidth]{Images/fire_spread_ngd_20000.png}
    \caption{Estimation of the target Shapley effects in the fire spread example, in the given-model framework with $N_{tot} = 2\times 10^4$.}
    \label{fig:fire_spread_20000}
  \end{minipage}
  \hfill
  \begin{minipage}[b]{0.47\textwidth}
    \includegraphics[width=\textwidth]{Images/fire_spread_ngd_100000.png}
    \caption{Estimation of the target Shapley effects in the fire spread example, in both given-model and given-data frameworks with $N_{tot} = 10^5$.}
    \label{fig:fire_spread_100000}
  \end{minipage}
\end{figure}

\begin{remark}
In all the figures, one can remark that there is no benefit to use the Pick-Freeze estimators instead of the double Monte Carlo estimators. This fact can be counter-intuitive knowing what happens in the independent case: the authors of \cite{janon2014asymptotic} proved that some Pick-Freeze estimators are asymptotically efficient. A similar remark has already been made in \cite{broto2020variance}, which can also be applied to our case. They highlighted first that the authors of \cite{janon2014asymptotic} estimate the variance of the output $Y$ in their procedure to estimate each Sobol indices, and second that the double Monte Carlo estimators are based on different observations from the Pick-Freeze estimators.\end{remark}

\FloatBarrier

\section{Conclusion}
\label{sec:conclusion}

In the present article, we are interested in the estimation of the target Shapley effects, whose goal is to quantify the influence of each input variable on the occurrence of the failure of the system and which are able to handle correlated inputs. We suggest new importance-sampling-based estimators of the target Shapley effects, extending the previous works of \cite{broto2020variance,ILIDRISSI2021105115}, which are more efficient than the existing ones when the failure probability is low. Moreover, we also introduce less expensive importance-sampling-based estimators requiring only an i.i.d. input/output $N$-sample distributed according to the IS auxiliary distribution, which enable to estimate efficiently the target Shapley effects without additional calls to the function $\phi$ after the estimation of the failure probability by importance sampling. In addition, we show theoretically that using the optimal IS auxiliary distribution for estimating a failure probability by importance sampling as the IS auxiliary distribution in the Pick-Freeze estimator improves the estimation of the target Shapley effects in comparison to the existing Pick-Freeze estimators. This result has a massive practical advantage because it justifies that it is beneficial to reuse the available sample from the reliability analysis to estimate the target Shapley effects by importance sampling. Finally, we illustrate and discuss the practical interest of the proposed estimators on the Gaussian linear case and on two real physical examples, all involving correlated inputs.

The main perspective for improvement of the suggested approach is to make the proposed estimators more robust faced with the dimension. Indeed, as explained and illustrated in the article, the nearest neighbour approximation creates an error which is getting larger when the dimension increases. The preprocessing procedure introduced in \ref{ss:preprocessing} seems to reduce this error, but potentially not enough so in higher dimension. Note also that in the highest dimension considered here, $d=10$ in \Cref{fig:fire_spread}, the performances provided by the nearest neighbour approximation are not as good as in the other settings. This classic problem in the analysis of a complex system is called the \textit{curse of dimensionality}. Nevertheless, new approaches based on projected random forests \cite{benard2022shaff} could improve the estimation of the target Shapley effects when the dimension increases. It might be possible to adapt the proposed method to our framework by taking into account the weights from importance sampling in the construction of the random forests. One can also mention recent projection methods \cite{zahm2018certified} to reduce the dimension of the problem.

Finally, it could be interesting to inspect methods to estimate the target Shapley effects efficiently while building a surrogate-model with importance sampling, in the same way as the method presented in \cite{echard2013combined}. At last, subset simulation could be used to estimate efficiently the target Shapley effects when the failure probability is very small instead of importance sampling. It may be done in low dimension by extending the work presented in \cite{perrin2019efficient} which aims to estimate the first and total orders target Sobol indices with a failure sample obtained by subset simulation.

%% The Acknowledgements part is started with the command \acknowledgements;
%% acknowledgements are then done as normal sections before appendix
%% \acknowledgements

\acknowledgements

The first author is enrolled in a Ph.D. program co-funded by \textit{ONERA – The French Aerospace Lab} and \textit{Toulouse III - Paul Sabatier University}. Their financial supports are gratefully acknowledged.

%% The Appendices part is started with the command \appendix;
%% appendix sections are then done as normal sections and after Acknowledgements
%% \appendix

%% \section{}
%% \label{}

%\FloatBarrier

%\newpage

%\counterwithin{figure}{section}

\appendix
\section{Proof of lemma 1}
\label{app:proof_lemma_fund_is_dmc}

First of all, let us remark that the convention $0/0=0$ introduced and adopted at the beginning of \Cref{sec:contribution} prevents the following proofs from any possible problem caused by a division by 0 or caused by a non-definition of any conditional PDF.

For any IS auxiliary density $g:\mathbb{X}\longrightarrow\mathbb{R}_+$, we have:
\begin{align*}
    \mathbb{E}_{f_{\mathbf{X}}}&\left[\mathbb{E}_{f_{\mathbf{X}}}\left(\psi_t(\mathbf{X})|\mathbf{X}_{-u}\right)^2 \right] = \int_{\mathbb{X}_{-u}} \mathbb{E}_{f_{\mathbf{X}}}\left(\psi_t(\mathbf{X})|\mathbf{X}_{-u} = \mathbf{x}_{-u}\right)^2f_{\mathbf{X}_{-u}}\left(\mathbf{x}_{-u}\right)d\mathbf{x}_{-u}\\
    &= \int_{\mathbb{X}_{-u}} \left(\int_{\mathbb{X}_u}\psi_t\left(\mathbf{x}_u,\mathbf{x}_{-u}\right) f_{\mathbf{X}_u|\mathbf{X}_{-u} = \mathbf{x}_{-u}}\left(\mathbf{x}_u\right)d\mathbf{x}_u\right)^2f_{\mathbf{X}_{-u}}\left(\mathbf{x}_{-u}\right)d\mathbf{x}_{-u} \\
    &= \int_{\mathbb{X}_{-u}} \left(\int_{\mathbb{X}_u}\psi_t\left(\mathbf{x}_u,\mathbf{x}_{-u}\right) \dfrac{f_{\mathbf{X}_u|\mathbf{X}_{-u}= \mathbf{x}_{-u}}\left(\mathbf{x}_u\right)}{g_{\mathbf{X}_u|\mathbf{X}_{-u}= \mathbf{x}_{-u}}\left(\mathbf{x}_u\right)}g_{\mathbf{X}_u|\mathbf{X}_{-u} = \mathbf{x}_{-u}}\left(\mathbf{x}_u\right)d\mathbf{x}_u\right)^2\dfrac{f_{\mathbf{X}_{-u}}\left(\mathbf{x}_{-u}\right)}{g_{\mathbf{X}_{-u}}\left(\mathbf{x}_{-u}\right)}g_{\mathbf{X}_{-u}}\left(\mathbf{x}_{-u}\right)d\mathbf{x}_{-u}\\
    &= \int_{\mathbb{X}_{-u}} \left[\mathbb{E}_g\left(\psi_t(\mathbf{X})\left.\dfrac{f_{\mathbf{X}_u|\mathbf{X}_{-u}}\left(\mathbf{X}_u\right)}{g_{\mathbf{X}_u|\mathbf{X}_{-u}}\left(\mathbf{X}_u\right)}\right|\mathbf{X}_{-u} = \mathbf{x}_{-u}\right)\right]^2\dfrac{f_{\mathbf{X}_{-u}}\left(\mathbf{x}_{-u}\right)}{g_{\mathbf{X}_{-u}}\left(\mathbf{x}_{-u}\right)}g_{\mathbf{X}_{-u}}\left(\mathbf{x}_{-u}\right)d\mathbf{x}_{-u}\\
    &= \mathbb{E}_g\left[\mathbb{E}_g\left(\psi_t(\mathbf{X})\left.\dfrac{f_{\mathbf{X}_u|\mathbf{X}_{-u}}\left(\mathbf{X}_u\right)}{g_{\mathbf{X}_u|\mathbf{X}_{-u}}\left(\mathbf{X}_u\right)}\right|\mathbf{X}_{-u}\right)^2 \dfrac{f_{\mathbf{X}_{-u}}\left(\mathbf{X}_{-u}\right)}{g_{\mathbf{X}_{-u}}\left(\mathbf{X}_{-u}\right)}\right].
\end{align*} Then, using the definition of the conditional PDF, remark that: $$\forall \left(\mathbf{x}_u,\mathbf{x}_{-u}\right) \in \mathbb{X}_u\times\mathbb{X}_{-u}, \ \left\{
    \begin{array}{rcl}
        f_{\mathbf{X}_u|\mathbf{X}_{-u}=\mathbf{x}_{-u}}\left(\mathbf{x}_u\right) &=& \dfrac{f_{\mathbf{X}}\left(\mathbf{x}_u,\mathbf{x}_{-u}\right)}{f_{\mathbf{X}_{-u}}\left(\mathbf{x}_{-u}\right)} \\
        g_{\mathbf{X}_u|\mathbf{X}_{-u}=\mathbf{x}_{-u}}\left(\mathbf{x}_u\right) &=& \dfrac{g\left(\mathbf{x}_u,\mathbf{x}_{-u}\right)}{g_{\mathbf{X}_{-u}}\left(\mathbf{x}_{-u}\right)}.
    \end{array}
\right. $$
By replacing in the above, we have: \begin{align*}
    \mathbb{E}_{f_{\mathbf{X}}}\left[\mathbb{E}_{f_{\mathbf{X}}}\left(\psi_t(\mathbf{X})|\mathbf{X}_{-u}\right)^2 \right] &= \mathbb{E}_g\left[\mathbb{E}_g\left(\psi_t(\mathbf{X})\left.\dfrac{f_{\mathbf{X}_u|\mathbf{X}_{-u}}\left(\mathbf{X}_u\right)}{g_{\mathbf{X}_u|\mathbf{X}_{-u}}\left(\mathbf{X}_u\right)}\right|\mathbf{X}_{-u}\right)^2 \dfrac{f_{\mathbf{X}_{-u}}\left(\mathbf{X}_{-u}\right)}{g_{\mathbf{X}_{-u}}\left(\mathbf{X}_{-u}\right)}\right] \\
    &= \mathbb{E}_g\left[\mathbb{E}_g\left(\psi_t(\mathbf{X})\left.\dfrac{f_{\mathbf{X}}\left(\mathbf{X}\right)}{f_{\mathbf{X}_{-u}}\left(\mathbf{X}_{-u}\right)}\dfrac{g_{\mathbf{X}_{-u}}\left(\mathbf{X}_{-u}\right)}{g\left(\mathbf{X}\right)}\right|\mathbf{X}_{-u}\right)^2 \dfrac{f_{\mathbf{X}_{-u}}\left(\mathbf{X}_{-u}\right)}{g_{\mathbf{X}_{-u}}\left(\mathbf{X}_{-u}\right)}\right]\\
    &= \mathbb{E}_g\left[\mathbb{E}_g\left(\psi_t(\mathbf{X})\left.\dfrac{f_{\mathbf{X}}\left(\mathbf{X}\right)}{g\left(\mathbf{X}\right)}\dfrac{g_{\mathbf{X}_{-u}}\left(\mathbf{X}_{-u}\right)}{f_{\mathbf{X}_{-u}}\left(\mathbf{X}_{-u}\right)}\right|\mathbf{X}_{-u}\right)^2 \dfrac{f_{\mathbf{X}_{-u}}\left(\mathbf{X}_{-u}\right)}{g_{\mathbf{X}_{-u}}\left(\mathbf{X}_{-u}\right)}\right]\\
    &= \mathbb{E}_g\left[\mathbb{E}_g\left(\psi_t(\mathbf{X})\left.\dfrac{f_{\mathbf{X}}\left(\mathbf{X}\right)}{g\left(\mathbf{X}\right)}\right|\mathbf{X}_{-u}\right)^2 \dfrac{g_{\mathbf{X}_{-u}}\left(\mathbf{X}_{-u}\right)^2}{f_{\mathbf{X}_{-u}}\left(\mathbf{X}_{-u}\right)^2} \dfrac{f_{\mathbf{X}_{-u}}\left(\mathbf{X}_{-u}\right)}{g_{\mathbf{X}_{-u}}\left(\mathbf{X}_{-u}\right)}\right]\\
    &=\mathbb{E}_g\left[\mathbb{E}_g\left(\psi_t(\mathbf{X})\left.\dfrac{f_{\mathbf{X}}\left(\mathbf{X}\right)}{g\left(\mathbf{X}\right)}\right|\mathbf{X}_{-u}\right)^2 \dfrac{g_{\mathbf{X}_{-u}}\left(\mathbf{X}_{-u}\right)}{f_{\mathbf{X}_{-u}}\left(\mathbf{X}_{-u}\right)}\right]\\
    &=\mathbb{E}_g\left[\mathbb{E}_g\left(w_t^g(\mathbf{X})\left.\right|\mathbf{X}_{-u}\right)^2 \dfrac{g_{\mathbf{X}_{-u}}\left(\mathbf{X}_{-u}\right)}{f_{\mathbf{X}_{-u}}\left(\mathbf{X}_{-u}\right)}\right].\end{align*} That concludes the proof of \Cref{fund_is_dmc}.\hfill $\square$

\section{Proof of proposition 1}
\label{app:proof_prop_dmc_is_ub}

First of all, recall that for $n\in[\![1,N_u]\!]$, the estimator $\overline{\psi_t^{\text{IS}}(\mathbf{X}_{-u}^{(n)})}$ in \eqref{inner_loop} is given by: \begin{equation}
   \overline{\psi_t^{\text{IS}}(\mathbf{X}_{-u}^{(n)})} = \dfrac{1}{N_I} \sum\limits_{k=1}^{N_I}w_t^g\left(\mathbf{X}_u^{(n,k)},\mathbf{X}_{-u}^{(n)}\right).
\end{equation}
Then, let us write $\widehat{E}_{u,\text{MC}}^{\text{IS}}$ for the uncorrected double Monte Carlo estimator of the term $\mathbb{E}_{f_{\mathbf{X}}}\left[\mathbb{E}_{f_{\mathbf{X}}}\left(\psi_t(\mathbf{X})|\mathbf{X}_{-u} \right)^2 \right] = \allowbreak \mathbb{E}_g\left[\mathbb{E}_g\left(w_t^g(\mathbf{X})\left.\right|\mathbf{X}_{-u}\right)^2 g_{\mathbf{X}_{-u}}\left(\mathbf{X}_{-u}\right)\left/f_{\mathbf{X}_{-u}}\left(\mathbf{X}_{-u}\right)\right.\right] $, obtained by removing $\widehat{E}_{\text{bias},u}^{\text{IS}}$ in \eqref{tevngd}:\begin{equation}\label{uncorr_dmc}
    \widehat{E}_{u,\text{MC}}^{\text{IS}} = \dfrac{1}{N_u}\sum_{n=1}^{N_u}\left(\overline{\psi_t^{\text{IS}}(\mathbf{X}_{-u}^{(n)})}\right)^2\dfrac{g_{\mathbf{X}_{-u}}\left(\mathbf{X}_{-u}^{(n)}\right)}{f_{\mathbf{X}_{-u}}\left(\mathbf{X}_{-u}^{(n)}\right)}.
\end{equation}  Then, this proof can be divided into two steps: \begin{enumerate}
    \item compute the bias of the estimator $\widehat{E}_{u,\text{MC}}^{\text{IS}}$ in \eqref{uncorr_dmc}
    \item propose an unbiased estimator of the latter bias in order to correct it.
\end{enumerate}  

\subsection{Bias in the inner loop}

First, for a given sample $\mathbf{X}_{-u}^{(n)} \in \mathbb{X}_{-u}$, let us compute the bias of the estimator $\left(\overline{\psi_t^{\text{IS}}(\mathbf{X}_{-u}^{(n)})}\right)^2$ in \eqref{inner_loop}: \begin{align*}
&\mathbb{E}_g\left(\left.\left(\overline{\psi_t^{\text{IS}}(\mathbf{X}_{-u}^{(n)})}\right)^2 \right|\mathbf{X}_{-u} =  \mathbf{X}_{-u}^{(n)}\right) = \mathbb{E}_g\left(\left.\left(\dfrac{1}{N_I} \sum\limits_{k=1}^{N_I}w_t^g\left(\mathbf{X}_u^{(n,k)},\mathbf{X}_{-u}^{(n)}\right)\right)^2 \right|\mathbf{X}_{-u} =  \mathbf{X}_{-u}^{(n)}\right) \\
&=\dfrac{1}{N_I^2} \sum_{k=1}^{N_I} \mathbb{E}_g\left(\left.w_t^g\left(\mathbf{X}_u^{(n,k)},\mathbf{X}_{-u}^{(n)}\right) ^2\right|\mathbf{X}_{-u} = \mathbf{X}_{-u}^{(n)}\right) \\
& \hspace{.25cm} + \dfrac{1}{N_I^2} \sum_{1\leq i\neq j \leq N_I} \mathbb{E}_g\left(\left. w_t^g\left(\mathbf{X}_u^{(n,i)},\mathbf{X}_{-u}^{(n)}\right)\right|\mathbf{X}_{-u} = \mathbf{X}_{-u}^{(n)}\right) \times \mathbb{E}_g\left(\left. w_t^g\left(\mathbf{X}_u^{(n,j)},\mathbf{X}_{-u}^{(n)}\right)\right|\mathbf{X}_{-u} = \mathbf{X}_{-u}^{(n)}\right)\\
&= \dfrac{1}{N_I} \mathbb{E}_g\left[\left.w_t^g\left(\mathbf{X}_u,\mathbf{X}_{-u}^{(n)}\right) ^2\right|\mathbf{X}_{-u} = \mathbf{X}_{-u}^{(n)} \right] + \dfrac{N_I-1}{N_I} \mathbb{E}_g\left[\left.w_t^g\left(\mathbf{X}_u,\mathbf{X}_{-u}^{(n)}\right)\right|\mathbf{X}_{-u} = \mathbf{X}_{-u}^{(n)} \right]^2\\
&= \mathbb{E}_g\left[\left.w_t^g\left(\mathbf{X}_u,\mathbf{X}_{-u}^{(n)}\right)\right|\mathbf{X}_{-u} = \mathbf{X}_{-u}^{(n)} \right]^2 \\
& \quad + \dfrac{1}{N_I}\mathbb{E}_g\left[\left.w_t^g\left(\mathbf{X}_u,\mathbf{X}_{-u}^{(n)}\right)^2\right|\mathbf{X}_{-u} = \mathbf{X}_{-u}^{(n)} \right] - \dfrac{1}{N_I} \mathbb{E}_g\left[\left.w_t^g\left(\mathbf{X}_u,\mathbf{X}_{-u}^{(n)}\right)\right|\mathbf{X}_{-u} = \mathbf{X}_{-u}^{(n)} \right]^2 \\
&= \mathbb{E}_g\left(\left.w_t^g\left(\mathbf{X}\right)\right|\mathbf{X}_{-u} = \mathbf{X}_{-u}^{(n)} \right)^2 + \dfrac{1}{N_I}\mathbb{V}_g\left(\left.w_t^g\left(\mathbf{X}\right)\right|\mathbf{X}_{-u} = \mathbf{X}_{-u}^{(n)}\right) \\
&= \mathbb{E}_g\left(\left.w_t^g\left(\mathbf{X}\right)\right|\mathbf{X}_{-u} = \mathbf{X}_{-u}^{(n)} \right)^2 + \mathbb{V}_g\left( \overline{\psi_t^{\text{IS}}(\mathbf{X}_{-u}^{(n)})}|\mathbf{X}_{-u} = \mathbf{X}_{-u}^{(n)}\right).
\end{align*} The bias of the estimator $\left(\overline{\psi_t^{\text{IS}}(\mathbf{X}_{-u}^{(n)})}\right)^2$ is thus $\mathbb{V}_g\left( \overline{\psi_t^{\text{IS}}(\mathbf{X}_{-u}^{(n)})}|\mathbf{X}_{-u} = \mathbf{X}_{-u}^{(n)}\right)$.

\subsection{Bias of the outer loop}

Second, let us derive the bias of the uncorrected double Monte Carlo estimator $\widehat{E}_{u,\text{MC}}^{\text{IS}}$ in \eqref{uncorr_dmc} of the double expectation $\mathbb{E}_g\left[\mathbb{E}_g\left(w_t^g(\mathbf{X})\left.\right|\mathbf{X}_{-u}\right)^2 g_{\mathbf{X}_{-u}}\left(\mathbf{X}_{-u}\right)\left/f_{\mathbf{X}_{-u}}\left(\mathbf{X}_{-u}\right)\right.\right]$ composed of both inner and outer loops: \begin{align*}
\mathbb{E}_g\left(\widehat{E}_{u,\text{MC}}^{\text{IS}} \right) &= \dfrac{1}{N_u} \sum_{n=1}^{N_u}\mathbb{E}_g\left(\left(\overline{\psi_t^{\text{IS}}(\mathbf{X}_{-u}^{(n)})}\right)^2\dfrac{g_{\mathbf{X}_{-u}}(\mathbf{X}_{-u}^{(n)})}{f_{\mathbf{X}_{-u}}(\mathbf{X}_{-u}^{(n)})}\right) \\
&= \dfrac{1}{N_u} \sum_{n=1}^{N_u}\mathbb{E}_g\left[\underbrace{\dfrac{g_{\mathbf{X}_{-u}}(\mathbf{X}_{-u}^{(n)})}{f_{\mathbf{X}_{-u}}(\mathbf{X}_{-u}^{(n)})} \mathbb{E}_g\left(\left.\left(\overline{\psi_t^{\text{IS}}(\mathbf{X}_{-u}^{(n)})}\right)^2\right|\mathbf{X}_{-u} = \mathbf{X}_{-u}^{(n)} \right)}_{\text{function of } \mathbf{X}_{-u}^{(n)} } \right] \\
&= \mathbb{E}_g\left[\dfrac{g_{\mathbf{X}_{-u}}(\mathbf{X}_{-u})}{f_{\mathbf{X}_{-u}}(\mathbf{X}_{-u})} \mathbb{E}_g\left(\left.\left(\overline{\psi_t^{\text{IS}}(\mathbf{X}_{-u})}\right)^2\right|\mathbf{X}_{-u} \right) \right ] \hspace{1cm} \text{where } \overline{\psi_t^{\text{IS}}} \text{ is as in \eqref{psi_t_is}} \\
&= \mathbb{E}_g\left[\dfrac{g_{\mathbf{X}_{-u}}(\mathbf{X}_{-u})}{f_{\mathbf{X}_{-u}}(\mathbf{X}_{-u})} \left(\mathbb{E}_g\left(\left.w_t^g\left(\mathbf{X}\right)\right|\mathbf{X}_{-u} \right)^2 + \mathbb{V}_g\left( \overline{\psi_t^{\text{IS}}(\mathbf{X}_{-u})}|\mathbf{X}_{-u}\right) \right)  \right] \\
&= \mathbb{E}_g\left[\dfrac{g_{\mathbf{X}_{-u}}(\mathbf{X}_{-u})}{f_{\mathbf{X}_{-u}}(\mathbf{X}_{-u})}\mathbb{E}_g\left(\left.w_t^g\left(\mathbf{X}\right)\right|\mathbf{X}_{-u} \right)^2 \right] + \mathbb{E}_g\left[\mathbb{V}_g\left( \overline{\psi_t^{\text{IS}}(\mathbf{X}_{-u})}|\mathbf{X}_{-u}\right)\dfrac{g_{\mathbf{X}_{-u}}(\mathbf{X}_{-u})}{f_{\mathbf{X}_{-u}}(\mathbf{X}_{-u})} \right]\\
&= \mathbb{E}_{f_{\mathbf{X}}}\left[\mathbb{E}_{f_{\mathbf{X}}}\left(\psi_t(\mathbf{X})|\mathbf{X}_{-u} \right)^2 \right] + \mathbb{E}_g\left[\mathbb{V}_g\left( \overline{\psi_t^{\text{IS}}(\mathbf{X}_{-u})}|\mathbf{X}_{-u}\right)\dfrac{g_{\mathbf{X}_{-u}}(\mathbf{X}_{-u})}{f_{\mathbf{X}_{-u}}(\mathbf{X}_{-u})} \right] \\
& \hspace{8cm} \text{ thanks to \Cref{fund_is_dmc}.}
\end{align*} Therefore, the bias of $\widehat{E}_{u,\text{MC}}^{\text{IS}}$ is $\mathbb{E}_g\left[\mathbb{V}_g\left( \overline{\psi_t^{\text{IS}}(\mathbf{X}_{-u})}|\mathbf{X}_{-u}\right)g_{\mathbf{X}_{-u}}(\mathbf{X}_{-u})\left/f_{\mathbf{X}_{-u}}(\mathbf{X}_{-u})\right. \right]$. The problem is now to estimate it in order to propose an unbiased estimator of $\text{T-EV}_u$ by double Monte Carlo with importance sampling.

\subsection{Estimation of the bias}

To estimate the previous bias, let us before prove the following lemma.\begin{lemma}\label{esti_var_est}
Let $(Z_n)_{n\in [\![1,N]\!]}$ be a sequence of independent and identically distributed random variables such that $\mathbb{E}\left(Z_1^2\right) < + \infty$. Let us consider the empirical estimator $\widehat{Z}_N = N^{-1}\sum_{n=1}^N Z_n$ of the mean value of $Z_1$. Then: \begin{equation}
    \widehat{V}_Z = \frac{1}{N-1}\left[\frac{1}{N}\sum_{n=1}^N Z_n^2 - \widehat{Z}_N^2\right]
\end{equation} is an unbiased estimator of $\mathbb{V}\left(\widehat{Z}_N\right)$.
\end{lemma}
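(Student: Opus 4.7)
The plan is a direct computation using two facts: the sample-mean variance formula $\mathbb{V}(\widehat{Z}_N) = \mathbb{V}(Z_1)/N$ (which follows from independence), and the identity $\mathbb{E}(W^2) = \mathbb{V}(W) + \mathbb{E}(W)^2$ applied to both $Z_1$ and $\widehat{Z}_N$.

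First I would expand the expectation term by term. Linearity gives
\[
\mathbb{E}\left(\frac{1}{N}\sum_{n=1}^N Z_n^2\right) = \mathbb{E}(Z_1^2) = \mathbb{V}(Z_1) + \mathbb{E}(Z_1)^2,
\]
since the $Z_n$ are identically distributed. Next, applying the same identity to $\widehat{Z}_N$ and using independence,
\[
\mathbb{E}(\widehat{Z}_N^2) = \mathbb{V}(\widehat{Z}_N) + \mathbb{E}(\widehat{Z}_N)^2 = \frac{\mathbb{V}(Z_1)}{N} + \mathbb{E}(Z_1)^2.
\]

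Subtracting the two, the $\mathbb{E}(Z_1)^2$ terms cancel and I get
\[
\mathbb{E}\left(\frac{1}{N}\sum_{n=1}^N Z_n^2 - \widehat{Z}_N^2\right) = \mathbb{V}(Z_1)\left(1 - \frac{1}{N}\right) = \frac{N-1}{N}\mathbb{V}(Z_1).
\]
Multiplying by $1/(N-1)$ then yields $\mathbb{E}(\widehat{V}_Z) = \mathbb{V}(Z_1)/N = \mathbb{V}(\widehat{Z}_N)$, which is the claim.

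There is no real obstacle here — the $N-1$ normalization in the definition of $\widehat{V}_Z$ is chosen precisely to absorb the factor $(N-1)/N$ arising from the difference of second moments, so the computation goes through mechanically once the two moment identities are written down. The integrability assumption $\mathbb{E}(Z_1^2) < +\infty$ is exactly what guarantees that all expectations above are finite and that the manipulations are valid.
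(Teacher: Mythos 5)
Your proof is correct and follows essentially the same route as the paper's: a direct computation of $\mathbb{E}(\widehat{V}_Z)$ term by term, with the only cosmetic difference being that you invoke the standard identity $\mathbb{V}(\widehat{Z}_N)=\mathbb{V}(Z_1)/N$ where the paper expands $\widehat{Z}_N^2$ explicitly into diagonal and cross terms before using independence. Nothing is missing.
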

 \begin{proof}Let us compute the expectation of the estimator $\widehat{V}_Z$:\begin{align*}
    \mathbb{E}\left(\widehat{V}_Z\right) &= \mathbb{E}\left(\frac{1}{N-1}\left[\frac{1}{N}\sum_{n=1}^N Z_n^2 - \widehat{Z}_N^2\right]\right)\\
    &= \frac{1}{N-1}\left[\mathbb{E}\left(Z^2\right) - \mathbb{E}\left(\widehat{Z}_N^2\right)\right]\\
    &= \frac{1}{N-1}\left[\mathbb{E}\left(Z^2\right) - \mathbb{E}\left(\frac{1}{N^2}\sum_{n=1}^N Z_n^2 + \frac{1}{N^2}\sum_{1\leq i \neq j \leq N} Z_iZ_j\right)\right]\\
    &= \frac{1}{N-1}\left[\mathbb{E}\left(Z^2\right) - \frac{1}{N}\mathbb{E}\left(Z^2\right) -  \frac{N-1}{N}\mathbb{E}\left(Z\right)^2\right]\\
    &= \frac{1}{N-1}\left[\frac{N-1}{N}\left(\mathbb{E}\left(Z^2\right)-\mathbb{E}\left(Z\right)^2\right)\right]\\
    &= \frac{N-1}{N(N-1)}\mathbb{V}(Z)\\
    &= \frac{1}{N}\mathbb{V}(Z)\\
    &= \mathbb{V}(\widehat{Z}_N).
\end{align*}That concludes the proof of this lemma.\end{proof}

By applying the previous lemma to the i.i.d. sequence $\left(w_t^g\left(\mathbf{X}_u^{(n,k)},\mathbf{X}_{-u}^{(n)}\right)\right)_{k\in[\![1,N_I]\!]}$ for all $n \in [\![1,N_u]\!]$, the estimator $\left(N_I-1\right)^{-1}\left[N_I^{-1} \sum\limits_{k=1}^{N_I}w_t^g\left( \mathbf{X}_u^{(n,k)},\mathbf{X}_{-u}^{(n)}\right)^2 - \left(\overline{\psi_t^{\text{IS}}\left(\mathbf{X}_{-u}^{(n)}\right)}\right)^2\right]$ estimates without bias the conditional variance $\mathbb{V}_g\left( \overline{\psi_t^{\text{IS}}(\mathbf{X}_{-u}^{(n)})}|\mathbf{X}_{-u} = \mathbf{X}_{-u}^{(n)}\right)$. Then, since the outer loop is only an empirical mean, $\widehat{E}_{\text{bias},u}^{\text{IS}}$ in \eqref{bias_est} is therefore an unbiased estimator of the bias $\mathbb{E}_g\left[\mathbb{V}_g\left( \overline{\psi_t^{\text{IS}}(\mathbf{X}_{-u})}|\mathbf{X}_{-u}\right)g_{\mathbf{X}_{-u}}(\mathbf{X}_{-u})\left/f_{\mathbf{X}_{-u}}(\mathbf{X}_{-u})\right. \right]$ and allows to correct the bias created by the square in the inner loop. Finally, by the linearity of the expectation, $\widehat{E}_{u,\text{MC}}^{\text{IS}} - \widehat{E}_{\text{bias},u}^{\text{IS}}$ is an unbiased estimator of $\mathbb{E}_g\left[\mathbb{E}_g\left(w_t^g(\mathbf{X})\left.\right|\mathbf{X}_{-u}\right)^2 g_{\mathbf{X}_{-u}}\left(\mathbf{X}_{-u}\right)\left/f_{\mathbf{X}_{-u}}\left(\mathbf{X}_{-u}\right)\right.\right]$, and thus $\widehat{\text{T-EV}}_{u,\text{MC}}^{\text{IS}}$ in \eqref{tevngd} is an unbiased estimator by double Monte Carlo with importance sampling of $\text{T-EV}_u$. That concludes the proof of \Cref{dmc_is_ub}.\hfill $\square$

\section{Proof that the estimator in eq. (25) is an unbiased estimator of $p_t^2$}
\label{app:ub_pt2}

Let us compute the bias of the estimator $\left(\widehat{p}_{t,N}^{\text{IS}}\right)^2$ in \eqref{pt2_is_ub}: \begin{align*}
    \mathbb{E}_g\left(\left(\widehat{p}_{t,N}^{\text{IS}}\right)^2\right) &=  \mathbb{E}_g\left(\left(\frac{1}{N}\sum_{n=1}^N w_t^g\left(\mathbf{X}^{(n)}\right)\right)^2\right)\\
    &= \dfrac{1}{N^2}\mathbb{E}_g\left(\sum_{n=1}^N w_t^g\left(\mathbf{X}^{(n)}\right)^2\right) + \dfrac{1}{N^2}\mathbb{E}_g\left(\sum_{1\leq i\neq j \leq N} w_t^g\left(\mathbf{X}^{(i)}\right)w_t^g\left(\mathbf{X}^{(j)}\right)\right)\\
    &= \dfrac{N}{N^2}\mathbb{E}_g\left(w_t^g\left(\mathbf{X}\right)^2\right) + \dfrac{N(N-1)}{N^2}\mathbb{E}_g\left(w_t^g\left(\mathbf{X}\right)\right)\mathbb{E}_g\left(w_t^g\left(\mathbf{X}\right)\right)\\
    &= \dfrac{1}{N}\mathbb{E}_g\left(w_t^g\left(\mathbf{X}\right)^2\right) + \dfrac{N-1}{N}\mathbb{E}_g\left(w_t^g\left(\mathbf{X}\right)\right)^2\\
    &= \mathbb{E}_g\left(w_t^g\left(\mathbf{X}\right)\right)^2 + \dfrac{1}{N}\left(\mathbb{E}_g\left(w_t^g\left(\mathbf{X}\right)^2\right) - \mathbb{E}_g\left(w_t^g\left(\mathbf{X}\right)\right)^2\right)\\
    &= \mathbb{E}_g\left(w_t^g\left(\mathbf{X}\right)\right)^2 + \dfrac{1}{N}\mathbb{V}_g\left(w_t^g\left(\mathbf{X}\right)\right)\\
    &= p_t^2 + \mathbb{V}_g\left(\widehat{p}_{t,N}^{\text{IS}}\right).
\end{align*} Then, \Cref{esti_var_est} justifies that $\left(N-1\right)^{-1}\left[N^{-1} \sum\limits_{n=1}^{N} w_t^g\left(\mathbf{X}^{(n)}\right)^2 - \left(\widehat{p}_{t,N}^{\text{IS}}\right)^2\right]$ is an unbiased estimator of $\mathbb{V}_g\left(\widehat{p}_{t,N}^{\text{IS}}\right)$. Therefore, by the linearity of the expectation, $\widehat{p}_{t,N}^{\text{IS},\text{ub}}$ in \eqref{pt2_is_ub} is an unbiased estimator of $p_t^2$. \hfill $\square$

\section{Proof of lemma 2}
\label{app:proof_lemma_fund_is_pf}
To begin with, let us prove the following lemma.
\begin{lemma}The PDF of the joint distribution of the random vector $\left(\mathbf{X}_u,\mathbf{X}_{-u},\mathbf{X}_{-u}'\right)$ satisfies for all $\mathbf{x}_u,\mathbf{x}_{-u},\mathbf{x}_{-u}' \in \mathbb{X}_{u}\times \mathbb{X}_{-u}\times \mathbb{X}_{-u}$: \begin{equation}f_{\mathbf{X}_u,\mathbf{X}_{-u},\mathbf{X}_{-u}'}(\mathbf{x}_u,\mathbf{x}_{-u},\mathbf{x}_{-u}') = f_{\mathbf{X}}(\mathbf{x}_u,\mathbf{x}_{-u}) \dfrac{f_{\mathbf{X}}(\mathbf{x}_u,\mathbf{x}_{-u}')}{f_{\mathbf{X}_u}(\mathbf{x}_u)}.\end{equation}\end{lemma}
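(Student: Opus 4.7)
The plan is to apply the chain rule for densities factoring through the pair $(\mathbf{X}_u,\mathbf{X}_{-u})$, then exploit successively the conditional independence $\mathbf{X}_{-u}' \perp\!\!\!\perp \mathbf{X}_{-u} \mid \mathbf{X}_u$ and the equality in conditional distribution $\mathbf{X}_{-u}' \mid \mathbf{X}_u \overset{d}{=} \mathbf{X}_{-u} \mid \mathbf{X}_u$. First I would write
\begin{equation*}
f_{\mathbf{X}_u,\mathbf{X}_{-u},\mathbf{X}_{-u}'}(\mathbf{x}_u,\mathbf{x}_{-u},\mathbf{x}_{-u}') = f_{\mathbf{X}_u,\mathbf{X}_{-u}}(\mathbf{x}_u,\mathbf{x}_{-u})\,f_{\mathbf{X}_{-u}'\mid \mathbf{X}_u,\mathbf{X}_{-u}}(\mathbf{x}_{-u}'\mid \mathbf{x}_u,\mathbf{x}_{-u}),
\end{equation*}
recognizing the first factor as $f_{\mathbf{X}}(\mathbf{x}_u,\mathbf{x}_{-u})$.

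Next, the conditional independence assumption $\mathbf{X}_{-u}' \perp\!\!\!\perp \mathbf{X}_{-u} \mid \mathbf{X}_u$ reduces the second factor to $f_{\mathbf{X}_{-u}'\mid \mathbf{X}_u}(\mathbf{x}_{-u}'\mid \mathbf{x}_u)$. Then the distributional equality $\mathbf{X}_{-u}' \mid \mathbf{X}_u \overset{d}{=} \mathbf{X}_{-u} \mid \mathbf{X}_u$ allows us to replace this by $f_{\mathbf{X}_{-u}\mid \mathbf{X}_u}(\mathbf{x}_{-u}'\mid \mathbf{x}_u)$, and by definition of the conditional density this equals $f_{\mathbf{X}}(\mathbf{x}_u,\mathbf{x}_{-u}')/f_{\mathbf{X}_u}(\mathbf{x}_u)$. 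Combining the two factors yields exactly the claimed identity.

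I do not anticipate a real obstacle here: the proof is essentially a bookkeeping exercise with the chain rule and the two defining properties of $\mathbf{X}^u$. The only subtlety is to be careful with the convention $0/0 = 0$ adopted at the beginning of \Cref{sec:contribution}, which ensures that the displayed equality still holds on the set $\{\mathbf{x}_u : f_{\mathbf{X}_u}(\mathbf{x}_u) = 0\}$, where both sides are zero and the conditional densities are defined conventionally.
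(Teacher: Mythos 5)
Your proof is correct and follows essentially the same route as the paper's: chain rule, then the conditional independence $\mathbf{X}_{-u}'\perp\!\!\!\perp\mathbf{X}_{-u}\mid\mathbf{X}_u$, then the distributional equality $\mathbf{X}_{-u}'\mid\mathbf{X}_u\overset{d}{=}\mathbf{X}_{-u}\mid\mathbf{X}_u$, with the $0/0=0$ convention covering the degenerate case exactly as the paper does. The only cosmetic difference is that you condition on the pair $(\mathbf{X}_u,\mathbf{X}_{-u})$ first whereas the paper factors out $f_{\mathbf{X}_u}$ and then splits the joint conditional density of $(\mathbf{X}_{-u},\mathbf{X}_{-u}')$; the two orderings are interchangeable.
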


\begin{proof}

Let $\mathbf{x}_u,\mathbf{x}_{-u},\mathbf{x}_{-u}' \in \mathbb{X}_{u}\times \mathbb{X}_{-u}\times \mathbb{X}_{-u}$. Then, regardless whether $\mathbf{x}_u$ is in the support of $f_{\mathbf{X}_u}$ or not, the convention $0/0=0$ allows to write:
\begin{align*}
f_{\mathbf{X}_u,\mathbf{X}_{-u},\mathbf{X}_{-u}'}(\mathbf{x}_u,\mathbf{x}_{-u},\mathbf{x}_{-u}') &= f_{\mathbf{X}_u}(\mathbf{x}_u) f_{\mathbf{X}_{-u},\mathbf{X}_{-u}'|\mathbf{X_u}=\mathbf{x}_u}(\mathbf{x}_{-u},\mathbf{x}_{-u}')\\
&= f_{\mathbf{X}_u}(\mathbf{x}_u) f_{\mathbf{X}_{-u}|\mathbf{X}_u=\mathbf{x}_u}(\mathbf{x}_{-u})f_{\mathbf{X}_{-u}'|\mathbf{X}_u=\mathbf{x}_u}(\mathbf{x}_{-u}')\\
&= f_{\mathbf{X}_u}(\mathbf{x}_u) \dfrac{f_{\mathbf{X}_u, \mathbf{X}_{-u}}(\mathbf{x}_u, \mathbf{x}_{-u})}{f_{\mathbf{X}_u}(\mathbf{x}_u)} \dfrac{f_{\mathbf{X}_u, \mathbf{X}_{-u}'}(\mathbf{x}_u, \mathbf{x}_{-u}')}{f_{\mathbf{X}_u}(\mathbf{x}_u)}\\
&=f_{\mathbf{X}}(\mathbf{x}_u,\mathbf{x}_{-u}) \dfrac{f_{\mathbf{X}}(\mathbf{x}_u,\mathbf{x}_{-u}')}{f_{\mathbf{X}_u}(\mathbf{x}_u)}.
\end{align*} That concludes the proof of the lemma.
\end{proof} 
Then, by remarking that the term in the expectation $\mathbb{E}_f\left(\psi_t(\mathbf{X})\psi_t(\mathbf{X}^u) \right)$ is a function of the three random variables $\mathbf{X}_u$, $\mathbf{X}_{-u}$ and $\mathbf{X}_{-u}'$ with the correlation structure described above, we have:
\begin{align*}
&\mathbb{E}_{f_{\mathbf{X}}}\left(\psi_t(\mathbf{X})\psi_t(\mathbf{X}^u) \right) = \mathbb{E}_{f_{\mathbf{X}}}\left(\psi_t\left(\mathbf{X}_u,\mathbf{X}_{-u} \right)\psi_t\left(\mathbf{X}_u,\mathbf{X}_{-u}' \right) \right) \\
&= \int_{\mathbb{X}_u}\int_{\mathbb{X}_{-u}}\int_{\mathbb{X}_{-u}} \psi_t(\mathbf{x}_u,\mathbf{x}_{-u})\psi_t(\mathbf{x}_u,\mathbf{x}_{-u}') f_{\mathbf{X}_u,\mathbf{X}_{-u},\mathbf{X}_{-u}'}(\mathbf{x}_u,\mathbf{x}_{-u},\mathbf{x}_{-u}') d\mathbf{x}_{-u}'d\mathbf{x}_{-u}d\mathbf{x}_{u}\\
&= \int_{\mathbb{X}_u}\int_{\mathbb{X}_{-u}}\int_{\mathbb{X}_{-u}} \psi_t(\mathbf{x}_u,\mathbf{x}_{-u})\psi_t(\mathbf{x}_u,\mathbf{x}_{-u}') \dfrac{f_{\mathbf{X}_u,\mathbf{X}_{-u},\mathbf{X}_{-u}'}(\mathbf{x}_u,\mathbf{x}_{-u},\mathbf{x}_{-u}')}{g_{\mathbf{X}_u,\mathbf{X}_{-u},\mathbf{X}_{-u}'}(\mathbf{x}_u,\mathbf{x}_{-u},\mathbf{x}_{-u}')}\\
&\hspace{8cm} g_{\mathbf{X}_u,\mathbf{X}_{-u},\mathbf{X}_{-u}'}(\mathbf{x}_u,\mathbf{x}_{-u},\mathbf{x}_{-u}') d\mathbf{x}_{-u}'d\mathbf{x}_{-u}d\mathbf{x}_{u}\\
&=\mathbb{E}_g\left(\psi_t\left(\mathbf{X}_u,\mathbf{X}_{-u} \right)\psi_t\left(\mathbf{X}_u,\mathbf{X}_{-u}' \right)\dfrac{f_{\mathbf{X}_u,\mathbf{X}_{-u},\mathbf{X}_{-u}'}(\mathbf{X}_u,\mathbf{X}_{-u},\mathbf{X}_{-u}')}{g_{\mathbf{X}_u,\mathbf{X}_{-u},\mathbf{X}_{-u}'}(\mathbf{X}_u,\mathbf{X}_{-u},\mathbf{X}_{-u}')} \right)\\
&=\mathbb{E}_g\left(\psi_t\left(\mathbf{X}_u,\mathbf{X}_{-u} \right)\psi_t\left(\mathbf{X}_u,\mathbf{X}_{-u}' \right)\dfrac{f_{\mathbf{X}}(\mathbf{X}_u,\mathbf{X}_{-u})\dfrac{f_{\mathbf{X}}(\mathbf{X}_u,\mathbf{X}_{-u}')}{f_{\mathbf{X}_u}(\mathbf{X}_u)}}{g(\mathbf{X}_u,\mathbf{X}_{-u})\dfrac{g(\mathbf{X}_u,\mathbf{X}_{-u}')}{g_{\mathbf{X}_u}(\mathbf{X}_u)}} \right)\\
&=\mathbb{E}_g\left(\psi_t\left(\mathbf{X} \right)\psi_t\left(\mathbf{X}^u \right)\dfrac{f_{\mathbf{X}}(\mathbf{X})\dfrac{f_{\mathbf{X}}(\mathbf{X}^u)}{f_{\mathbf{X}_u}(\mathbf{X}_u)}}{g(\mathbf{X})\dfrac{g(\mathbf{X}^u)}{g_{\mathbf{X}_u}(\mathbf{X}_u)}} \right)\\
&=\mathbb{E}_g\left(\psi_t\left(\mathbf{X} \right)\psi_t\left(\mathbf{X}^u \right)\dfrac{f_{\mathbf{X}}(\mathbf{X})f_{\mathbf{X}}(\mathbf{X}^u)g_{\mathbf{X}_u}(\mathbf{X}_u)}{g(\mathbf{X})g(\mathbf{X}^u)f_{\mathbf{X}_u}(\mathbf{X}_u) }\right).
\end{align*}
That concludes the proof of the \Cref{fund_is_pf}.\hfill $\square$

\section{Proofs of inequalities of section 3.3}
\subsection{Proof of inequality \eqref{eq_var_pf_gopt}}
\label{app:proof_eq_var_pf_gopt}

First, recall that $g_{\text{opt}}$ defined for all $\mathbf{x}\in\mathbb{X}$ by $g_{\text{opt}}\left(\mathbf{x}\right) = p_t^{-1}\psi_t\left(\mathbf{x}\right)f_{\mathbf{X}}\left(\mathbf{x}\right)$ is the optimal IS auxiliary density to estimate the failure probability by importance sampling, and its marginal PDF according to $\mathbf{X}_u$ is given by: \begin{equation}
    \forall \mathbf{x}_u \in \mathbb{X}_u, \ g_{\text{opt}_{\mathbf{X}_u}}(\mathbf{x}_u) = \dfrac{1}{p_t}\int_{\mathbb{X}_{-u}}f_{\mathbf{X}}\left(\mathbf{x}_u,\mathbf{x}_{-u}\right)\psi_t\left(\mathbf{x}_u,\mathbf{x}_{-u}\right) d\mathbf{x}_{-u}.
\end{equation} 
Therefore, when the considered IS auxiliary distribution is $g_{\text{opt}}$, the variance of the Pick-Freeze given-model estimator with importance sampling $\widehat{\text{T-VE}}_{u,\text{PF}}^{\text{IS}}$ in \eqref{tvengd} of $\text{T-VE}_u$ satisfies:
\begin{align*}
\mathbb{V}_{g_{\text{opt}}}\left(\widehat{\text{T-VE}}_{u,\text{PF}}^{\text{IS}}\right) &= \mathbb{V}_{g_{\text{opt}}}\left( \dfrac{1}{N_u}\sum_{n=1}^{N_u}w_t^{g_{\text{opt}}}\left(\mathbf{X}_u^{(n)},\mathbf{X}_{-u}^{(n,1)}\right)w_t^{g_{\text{opt}}}\left(\mathbf{X}_u^{(n)},\mathbf{X}_{-u}^{(n,2)}\right)\dfrac{g_{\text{opt}_{\mathbf{X}_u}}(\mathbf{X}_u^{(n)})}{f_{\mathbf{X}_u}(\mathbf{X}_u^{(n)})} - \underbrace{\widehat{p}_{t,N}^{\text{IS},\text{ub}}}_{p_t}\right)\\
&= \mathbb{V}_{g_{\text{opt}}}\left( \dfrac{1}{N_u}\sum_{n=1}^{N_u}w_t^{g_{\text{opt}}}\left(\mathbf{X}_u^{(n)},\mathbf{X}_{-u}^{(n,1)}\right)w_t^{g_{\text{opt}}}\left(\mathbf{X}_u^{(n)},\mathbf{X}_{-u}^{(n,2)}\right)\dfrac{g_{\text{opt}_{\mathbf{X}_u}}(\mathbf{X}_u^{(n)})}{f_{\mathbf{X}_u}(\mathbf{X}_u^{(n)})}\right) \\
&=\dfrac{1}{N_u}\mathbb{V}_{g_{\text{opt}}}\left(w_t^{g_{\text{opt}}}\left(\mathbf{X}\right) w_t^{g_{\text{opt}}}\left(\mathbf{X^u}\right) \dfrac{g_{\text{opt}_{\mathbf{X}_u}}(\mathbf{X}_u)}{f_{\mathbf{X}_u}(\mathbf{X}_u) }\right) \\ &=\dfrac{1}{N_u}\mathbb{V}_{g_{\text{opt}}}\left(\psi_t\left(\mathbf{X}\right) \psi_t\left(\mathbf{X}^u\right) \dfrac{f_{\mathbf{X}}(\mathbf{X})f_{\mathbf{X}}(\mathbf{X}^u)g_{\text{opt}_{\mathbf{X}_u}}(\mathbf{X}_u)}{g_{\text{opt}}(\mathbf{X})g_{\text{opt}}(\mathbf{X}^u)f_{\mathbf{X}_u}(\mathbf{X}_u) }\right) \\
&= \dfrac{1}{N_u}\mathbb{V}_{g_{\text{opt}}}\left(p_t^2\times \dfrac{1}{p_t}\dfrac{\int_{\mathbb{X}_{-u}}f_{\mathbf{X}}(\mathbf{X}_u,\mathbf{x}_{-u})\psi_t\left(\mathbf{X}_u,\mathbf{x}_{-u}\right) d\mathbf{x}_{-u} }{f_{\mathbf{X}_u}(\mathbf{X}_u)}\right) \\
&\hspace{2cm}\text{by integrating the exact expressions of $g_{\text{opt}}$ and $g_{\text{opt}_{\mathbf{X}_u}}$ given above}\\
&= \dfrac{1}{N_u}\mathbb{V}_{g_{\text{opt}}}\left(p_t\int_{\mathbb{X}_{-u}}\dfrac{f_{\mathbf{X}}(\mathbf{X}_u,\mathbf{x}_{-u})  }{f_{\mathbf{X}_u}(\mathbf{X}_u)}\psi_t\left(\mathbf{X}_u,\mathbf{x}_{-u}\right)d\mathbf{x}_{-u}\right)\\
&= \dfrac{p_t^2}{N_u}\mathbb{V}_{g_{\text{opt}}}\left(\int_{\mathbb{X}_{-u}}f_{\mathbf{X}_{-u}|\mathbf{X}_u}(\mathbf{x}_{-u})\psi_t\left(\mathbf{X}_u,\mathbf{x}_{-u}\right)d\mathbf{x}_{-u}\right)\\
&= \dfrac{p_t^2}{N_u}\mathbb{V}_{g_{\text{opt}}}\left[\mathbb{E}_f\left(\psi_t\left(\mathbf{X}\right) |\mathbf{X}_u\right) \right] \\
& \leq \dfrac{p_t^2}{N_u}.
\end{align*} That concludes the proof of inequality \eqref{eq_var_pf_gopt}.\hfill $\square$

\subsection{Proof of inequality \eqref{eq_var_pf_fx} }
\label{app:proof_eq_var_pf_fx}

The estimator $\widehat{\text{T-VE}}_{u,\text{PF}}$ by Pick-Freeze given-model without importance sampling of $\text{T-VE}_u$ is given by:\begin{equation}
    \widehat{\text{T-VE}}_{u,\text{PF}} = \dfrac{1}{N_u}\sum_{n=1}^{N_u}\psi_t\left(\mathbf{X}_u^{(n)},\mathbf{X}_{-u}^{(n,1)}\right)\psi_t\left(\mathbf{X}_u^{(n)},\mathbf{X}_{-u}^{(n,2)}\right) - \widehat{p}_{t,N}^2,
\end{equation} where $\widehat{p}_{t,N}$ is the empirical Monte Carlo estimator of $p_t$ and where the required samples are drawn according to the input distribution $f_{\mathbf{X}}$. In the given-model framework, independent samples are used to estimate the Pick-Freeze expectation $\mathbb{E}_{f_{\mathbf{X}}}\left[\psi_t(\mathbf{X})\psi_t(\mathbf{X}^u)\right]$ and the square failure probability $p_t^2$. Therefore, the variance of $\widehat{\text{T-VE}}_{u,\text{PF}}$ satisfies: \begin{align*}
        \mathbb{V}_{f_{\mathbf{X}}}\left(\widehat{\text{T-VE}}_{u,\text{PF}}\right) &= \mathbb{V}_{f_{\mathbf{X}}}\left(\dfrac{1}{N_u}\sum_{n=1}^{N_u}\psi_t\left(\mathbf{X}_u^{(n)},\mathbf{X}_{-u}^{(n,1)}\right)\psi_t\left(\mathbf{X}_u^{(n)},\mathbf{X}_{-u}^{(n,2)}\right) - \widehat{p}_{t,N}^2\right)\\
        &= \mathbb{V}_{f_{\mathbf{X}}}\left(\dfrac{1}{N_u}\sum_{n=1}^{N_u}\psi_t\left(\mathbf{X}_u^{(n)},\mathbf{X}_{-u}^{(n,1)}\right)\psi_t\left(\mathbf{X}_u^{(n)},\mathbf{X}_{-u}^{(n,2)}\right)\right) + \mathbb{V}_{f_{\mathbf{X}}}\left(\widehat{p}_{t,N}^2\right)\\
        &\geq \mathbb{V}_{f_{\mathbf{X}}}\left(\dfrac{1}{N_u}\sum_{n=1}^{N_u}\psi_t\left(\mathbf{X}_u^{(n)},\mathbf{X}_{-u}^{(n,1)}\right)\psi_t\left(\mathbf{X}_u^{(n)},\mathbf{X}_{-u}^{(n,2)}\right)\right)\\
        &= \frac{1}{N_u} \mathbb{V}_{f_{\mathbf{X}}}\left(\psi_t(\mathbf{X})\psi_t(\mathbf{X}^u)\right) \\
        &= \frac{1}{N_u}\mathbb{E}_{f_{\mathbf{X}}}\left(\psi_t(\mathbf{X})\psi_t(\mathbf{X}^u)\right) - \frac{1}{N_u}\mathbb{E}_{f_{\mathbf{X}}}\left(\psi_t(\mathbf{X})\psi_t(\mathbf{X}^u)\right)^2\\
        &= \frac{1}{N_u}\left(\mathbb{V}_{f_{\mathbf{X}}}\left[\mathbb{E}_{f_{\mathbf{X}}}\left(\psi_t(\mathbf{X}) | \mathbf{X}_u\right)\right] + p_t^2\right) - \frac{1}{N_u}\left(\mathbb{V}_{f_{\mathbf{X}}}\left[\mathbb{E}_{f_{\mathbf{X}}}\left(\psi_t(\mathbf{X}) | \mathbf{X}_u\right)\right] + p_t^2\right)^2\\
        & \hspace{8cm} \text{thanks to \eqref{PF_fiab_dep}}\\
        &= \frac{1}{N_u}\left(\mathbb{V}_{f_{\mathbf{X}}}\left[\mathbb{E}_{f_{\mathbf{X}}}\left(\psi_t(\mathbf{X}) | \mathbf{X}_u\right)\right] + p_t^2\right)\left(1-\left(\mathbb{V}_{f_{\mathbf{X}}}\left[\mathbb{E}_{f_{\mathbf{X}}}\left(\psi_t(\mathbf{X}) | \mathbf{X}_u\right)\right] + p_t^2\right)\right).
\end{align*} That concludes the proof of inequality \eqref{eq_var_pf_fx}.\hfill $\square$

\section{Theoretical values of the target Shapley effects in the Gaussian linear framework}
\label{app:theo_lg}

Let us consider the Gaussian linear framework introduced in \Cref{sec:linear_gaussian}, and a failure threshold $t\in\mathbb{R}$. Recall that the input covariance matrix is symmetric positive-definite and that ${\boldsymbol{\beta}} \neq 0$, thus we have ${\boldsymbol{\beta}}^\top \mathbf{\Sigma} {\boldsymbol{\beta}} >0$.

Moreover, let us recall the following theorem:\begin{theorem}\label{com_lin_gauss}For $k\geq 1$, if $\mathbf{A}\in \mathcal{M}_{k,d}\left(\mathbb{R}\right)$, $\mathbf{b}\in\mathbb{R}^k$ and $\mathbf{X}\sim\mathcal{N}_d\left(\boldsymbol{\mu},\mathbf{\Sigma}\right)$, then:\begin{equation}
    \mathbf{A}\mathbf{X} + \mathbf{b} \sim \mathcal{N}_k\left(\mathbf{A}\boldsymbol{\mu} + \mathbf{b}, \mathbf{A}\mathbf{\Sigma}\mathbf{A}^\top\right).
\end{equation}
\end{theorem}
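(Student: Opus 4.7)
The plan is to prove the statement via characteristic functions, which is the cleanest route because it automatically handles the possibly degenerate situation where $\mathbf{A}\mathbf{\Sigma}\mathbf{A}^\top$ is only positive semi-definite (in particular the case $k>d$ or $\mathbf{A}$ rank-deficient). The density formula is only available in the non-degenerate Gaussian case, whereas the characteristic function characterises any Gaussian (including degenerate ones) and determines the distribution uniquely by Lévy's theorem.

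First I would recall the characteristic function of $\mathbf{X}\sim\mathcal{N}_d(\boldsymbol{\mu},\mathbf{\Sigma})$, namely $\varphi_{\mathbf{X}}(\mathbf{s})=\exp\bigl(i\mathbf{s}^\top\boldsymbol{\mu}-\tfrac{1}{2}\mathbf{s}^\top\mathbf{\Sigma}\mathbf{s}\bigr)$ for $\mathbf{s}\in\mathbb{R}^d$. Then for any $\mathbf{t}\in\mathbb{R}^k$, I would compute
\begin{equation*}
\varphi_{\mathbf{A}\mathbf{X}+\mathbf{b}}(\mathbf{t})=\mathbb{E}\!\left[e^{i\mathbf{t}^\top(\mathbf{A}\mathbf{X}+\mathbf{b})}\right]=e^{i\mathbf{t}^\top\mathbf{b}}\,\mathbb{E}\!\left[e^{i(\mathbf{A}^\top\mathbf{t})^\top\mathbf{X}}\right]=e^{i\mathbf{t}^\top\mathbf{b}}\,\varphi_{\mathbf{X}}(\mathbf{A}^\top\mathbf{t}),
\end{equation*}
and plug in the Gaussian characteristic function applied to $\mathbf{s}=\mathbf{A}^\top\mathbf{t}$. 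A quick rearrangement of the exponent yields
\begin{equation*}
\varphi_{\mathbf{A}\mathbf{X}+\mathbf{b}}(\mathbf{t})=\exp\!\left(i\mathbf{t}^\top(\mathbf{A}\boldsymbol{\mu}+\mathbf{b})-\tfrac{1}{2}\mathbf{t}^\top(\mathbf{A}\mathbf{\Sigma}\mathbf{A}^\top)\mathbf{t}\right),
\end{equation*}
which is exactly the characteristic function of $\mathcal{N}_k(\mathbf{A}\boldsymbol{\mu}+\mathbf{b},\mathbf{A}\mathbf{\Sigma}\mathbf{A}^\top)$. Invoking the uniqueness of the characteristic function concludes the proof.

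There is essentially no hard step: the only subtlety worth flagging is that $\mathbf{A}\mathbf{\Sigma}\mathbf{A}^\top$ is automatically symmetric positive semi-definite (for any $\mathbf{t}$, $\mathbf{t}^\top\mathbf{A}\mathbf{\Sigma}\mathbf{A}^\top\mathbf{t}=(\mathbf{A}^\top\mathbf{t})^\top\mathbf{\Sigma}(\mathbf{A}^\top\mathbf{t})\geq 0$ by positive semi-definiteness of $\mathbf{\Sigma}$), so the right-hand side of the claimed distribution is a well-defined (possibly degenerate) Gaussian law. No full-rank or invertibility hypothesis on $\mathbf{A}$ is needed, and the argument is valid for arbitrary $k\geq 1$, which is precisely what makes the characteristic-function approach preferable to a Jacobian-based change of variables.
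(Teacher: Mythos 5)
Your proof is correct and complete. Note that the paper itself offers no proof of this statement: it is introduced with ``let us recall the following theorem'' and used as a standard textbook fact, so there is no argument in the paper to compare against. Your characteristic-function route is the canonical way to establish it, and your computation
\begin{equation*}
\varphi_{\mathbf{A}\mathbf{X}+\mathbf{b}}(\mathbf{t})=e^{i\mathbf{t}^\top\mathbf{b}}\,\varphi_{\mathbf{X}}(\mathbf{A}^\top\mathbf{t})=\exp\!\left(i\mathbf{t}^\top(\mathbf{A}\boldsymbol{\mu}+\mathbf{b})-\tfrac{1}{2}\mathbf{t}^\top\mathbf{A}\mathbf{\Sigma}\mathbf{A}^\top\mathbf{t}\right)
\end{equation*}
is exactly right; invoking L\'evy's uniqueness theorem then finishes the job. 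Your remark about degeneracy is also well placed: in the paper's application the theorem is used with $k=1$ and $\mathbf{A}=\boldsymbol{\beta}^\top$ (and with a conditional covariance $\mathbf{\Sigma}_{-u,-u}-\mathbf{\Sigma}_{-u,u}\mathbf{\Sigma}_{u,u}^{-1}\mathbf{\Sigma}_{u,-u}$ that is only guaranteed positive semi-definite), so a density/Jacobian argument would not cover all the cases actually needed, whereas yours does. The only implicit dependency worth being explicit about is that you are taking $\mathcal{N}_k(\mathbf{m},\mathbf{V})$ for positive semi-definite $\mathbf{V}$ to be \emph{defined} by its characteristic function (or equivalently as an affine image of a standard Gaussian); with that convention the argument is airtight.
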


\subsection{Theoretical value of the failure probability}
\label{app:theo_fail_prob_lg}
\begin{theorem}
The failure probability is given by: \begin{equation}
    p_t^{\boldsymbol{\beta}} = 1 - \Phi\left(\dfrac{t-{\boldsymbol{\beta}}^\top\boldsymbol{\mu}}{\sqrt{{\boldsymbol{\beta}}^\top \mathbf{\Sigma} {\boldsymbol{\beta}}}} \right).
\end{equation}
\end{theorem}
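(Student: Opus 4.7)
The plan is to apply Theorem \ref{com_lin_gauss} to the scalar output $Y = \phi_{\boldsymbol{\beta}}(\mathbf{X}) = {\boldsymbol{\beta}}^\top \mathbf{X}$ and then standardize so that the failure probability reduces to a one-sided tail of the standard normal CDF $\Phi$.

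First, I would take $k=1$, $\mathbf{A} = {\boldsymbol{\beta}}^\top \in \mathcal{M}_{1,d}(\mathbb{R})$ and $\mathbf{b} = 0$ in Theorem \ref{com_lin_gauss}, which directly yields $Y \sim \mathcal{N}_1\!\left({\boldsymbol{\beta}}^\top \boldsymbol{\mu},\, {\boldsymbol{\beta}}^\top \mathbf{\Sigma} {\boldsymbol{\beta}}\right)$. The assumption that $\mathbf{\Sigma}$ is symmetric positive-definite combined with ${\boldsymbol{\beta}} \neq 0$ ensures ${\boldsymbol{\beta}}^\top \mathbf{\Sigma} {\boldsymbol{\beta}} > 0$, so the square root and the division performed in the next step are well-defined and $Y$ is a non-degenerate Gaussian random variable.

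Second, I would standardize by introducing $Z = \left(Y - {\boldsymbol{\beta}}^\top \boldsymbol{\mu}\right)/\sqrt{{\boldsymbol{\beta}}^\top \mathbf{\Sigma} {\boldsymbol{\beta}}}$, which is $\mathcal{N}(0,1)$-distributed by affine invariance of the Gaussian family. The failure event $\{Y > t\}$ then rewrites as $\bigl\{Z > (t - {\boldsymbol{\beta}}^\top \boldsymbol{\mu})/\sqrt{{\boldsymbol{\beta}}^\top \mathbf{\Sigma} {\boldsymbol{\beta}}}\bigr\}$, whose probability equals $1 - \Phi$ evaluated at that threshold by definition of the standard normal CDF, which is exactly the announced expression.

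There is no real obstacle here: the result is a one-line consequence of linear Gaussian calculus once Theorem \ref{com_lin_gauss} is invoked. The only subtle point worth making explicit in the write-up is the non-degeneracy of the variance ${\boldsymbol{\beta}}^\top \mathbf{\Sigma} {\boldsymbol{\beta}}$, which legitimates the standardization step and is guaranteed by the stated hypotheses on $\mathbf{\Sigma}$ and ${\boldsymbol{\beta}}$.
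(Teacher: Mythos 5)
Your proposal is correct and follows essentially the same route as the paper: both invoke Theorem \ref{com_lin_gauss} to identify the law of the standardized linear combination $\left({\boldsymbol{\beta}}^\top\mathbf{X}-{\boldsymbol{\beta}}^\top\boldsymbol{\mu}\right)/\sqrt{{\boldsymbol{\beta}}^\top\mathbf{\Sigma}{\boldsymbol{\beta}}}$ as standard normal and then read off the tail probability. Your explicit remark on the positivity of ${\boldsymbol{\beta}}^\top\mathbf{\Sigma}{\boldsymbol{\beta}}$ matches the hypothesis the paper also records before its computation.
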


\begin{proof}
In the Gaussian linear framework, the failure probability satisfies:
\begin{equation}p_t^{\boldsymbol{\beta}} = \mathbb{P}\left(\phi_{\boldsymbol{\beta}}\left(\mathbf{X}\right)>t\right)=\mathbb{P}\left({\boldsymbol{\beta}}^\top\mathbf{X}>t\right) = \mathbb{P}\left(\dfrac{{\boldsymbol{\beta}}^\top\mathbf{X}-{\boldsymbol{\beta}}^\top\boldsymbol{\mu}}{\sqrt{{\boldsymbol{\beta}}^\top \mathbf{\Sigma} {\boldsymbol{\beta}}}}>\dfrac{t-{\boldsymbol{\beta}}^\top\boldsymbol{\mu}}{\sqrt{{\boldsymbol{\beta}}^\top \mathbf{\Sigma} {\boldsymbol{\beta}}}}\right),
\end{equation} with ${\boldsymbol{\beta}}^\top \mathbf{\Sigma} {\boldsymbol{\beta}}>0$. Then, \Cref{com_lin_gauss} provides that $\left({\boldsymbol{\beta}}^\top\mathbf{X}-{\boldsymbol{\beta}}^\top\boldsymbol{\mu}\right)\left/\sqrt{{\boldsymbol{\beta}}^\top \mathbf{\Sigma} {\boldsymbol{\beta}}}\right. \sim \mathcal{N}_1\left(0,1\right)$. Finally, we have:\begin{equation}\label{proba_theo}
    p_t^{\boldsymbol{\beta}} = 1 - \Phi\left(\dfrac{t-{\boldsymbol{\beta}}^\top\boldsymbol{\mu}}{\sqrt{{\boldsymbol{\beta}}^\top \mathbf{\Sigma} {\boldsymbol{\beta}}}} \right),
\end{equation} where $\Phi$ is the CDF of the 1-dimensional standard Normal distribution.
\end{proof}

\subsection{Theoretical values of the target closed Sobol indices}
\label{app:theo_tcs_lg}

\begin{theorem}
For $u\in\mathcal{P}(d)\backslash\lbrace\varnothing,[\![1,d]\!]\rbrace$, the target closed Sobol index is given by: \begin{equation}\label{soboltheo}\text{T-VE}_u = \left\{
    \begin{array}{ll}
        \mathbb{V}\left[\Phi\left(\dfrac{t-{\boldsymbol{\beta}}_{u}^\top\mathbf{X}_{u} - {\boldsymbol{\beta}}_{-u}^\top\left(\boldsymbol{\mu}_{-u} + \mathbf{\Sigma}_{-u,u}\mathbf{\Sigma}_{u,u}^{-1}\left(\mathbf{X}_u - \boldsymbol{\mu}_u \right) \right)}{\sqrt{{\boldsymbol{\beta}}_{-u}^\top\left(\mathbf{\Sigma}_{-u,-u} - \mathbf{\Sigma}_{-u,u}\mathbf{\Sigma}_{u,u}^{-1}\mathbf{\Sigma}_{u,-u} \right)  {\boldsymbol{\beta}}_{-u}}} \right) \right]   & \mbox{ if } {\boldsymbol{\beta}}_{-u} \neq 0 \\
        p_t^{\boldsymbol{\beta}}\left( 1 - p_t^{\boldsymbol{\beta}}\right) & \mbox{else,}
    \end{array}
\right.
\end{equation} where for $u_1,u_2\in\mathcal{P}(d)$, $\mathbf{\Sigma}_{u_1,u_2} = \left(\Sigma_{i,j}\right)_{i\in u_1, j \in u_2}$. At last, using  the definition in \eqref{shapley}, one can derive the theoretical values of the target Shapley effects in the Gaussian linear framework.
\end{theorem}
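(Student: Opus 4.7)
The plan is to compute $\text{T-VE}_u = \mathbb{V}\bigl[\mathbb{E}(\psi_t(\mathbf{X})\mid \mathbf{X}_u)\bigr]$ directly from its definition, using the Gaussianity of $\mathbf{X}$ to evaluate the inner conditional expectation in closed form. Decomposing $\boldsymbol{\beta}^\top \mathbf{X} = \boldsymbol{\beta}_u^\top \mathbf{X}_u + \boldsymbol{\beta}_{-u}^\top \mathbf{X}_{-u}$, the conditional expectation becomes
$$\mathbb{E}\bigl(\psi_t(\mathbf{X})\mid \mathbf{X}_u\bigr) = \mathbb{P}\bigl(\boldsymbol{\beta}_{-u}^\top \mathbf{X}_{-u} > t - \boldsymbol{\beta}_u^\top \mathbf{X}_u \;\bigm|\; \mathbf{X}_u\bigr),$$
so the whole argument reduces to characterizing the law of $\boldsymbol{\beta}_{-u}^\top \mathbf{X}_{-u}$ conditionally on $\mathbf{X}_u$.

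First I would recall the classical Gaussian conditioning formula: the conditional law $\mathbf{X}_{-u}\mid \mathbf{X}_u$ is $\mathcal{N}_{d-|u|}(\boldsymbol{\mu}_{-u|u},\boldsymbol{\Sigma}_{-u|u})$ with $\boldsymbol{\mu}_{-u|u} = \boldsymbol{\mu}_{-u} + \boldsymbol{\Sigma}_{-u,u}\boldsymbol{\Sigma}_{u,u}^{-1}(\mathbf{X}_u - \boldsymbol{\mu}_u)$ and $\boldsymbol{\Sigma}_{-u|u} = \boldsymbol{\Sigma}_{-u,-u} - \boldsymbol{\Sigma}_{-u,u}\boldsymbol{\Sigma}_{u,u}^{-1}\boldsymbol{\Sigma}_{u,-u}$ (the Schur complement). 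Applying \Cref{com_lin_gauss} to the scalar linear combination $\boldsymbol{\beta}_{-u}^\top \mathbf{X}_{-u}$ then gives a $1$-dimensional Gaussian with mean $\boldsymbol{\beta}_{-u}^\top \boldsymbol{\mu}_{-u|u}$ and variance $\boldsymbol{\beta}_{-u}^\top\boldsymbol{\Sigma}_{-u|u}\boldsymbol{\beta}_{-u}$.

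Next, I would split into the two cases of the theorem. When $\boldsymbol{\beta}_{-u}\neq 0$, positive-definiteness of $\boldsymbol{\Sigma}$ implies the same for its Schur complement $\boldsymbol{\Sigma}_{-u|u}$, hence $\boldsymbol{\beta}_{-u}^\top\boldsymbol{\Sigma}_{-u|u}\boldsymbol{\beta}_{-u} > 0$ and we can standardize: the conditional probability equals $1 - \Phi(Z(\mathbf{X}_u))$, where $Z(\mathbf{X}_u)$ is the fraction appearing in the theorem. Using $\mathbb{V}(1-\Phi(Z)) = \mathbb{V}(\Phi(Z))$ yields the claimed formula. When $\boldsymbol{\beta}_{-u}=0$, the output $\phi_{\boldsymbol{\beta}}(\mathbf{X})=\boldsymbol{\beta}_u^\top \mathbf{X}_u$ is $\sigma(\mathbf{X}_u)$-measurable, so the conditional expectation collapses to $\psi_t(\mathbf{X})$ itself, which is Bernoulli$(p_t^{\boldsymbol{\beta}})$, giving variance $p_t^{\boldsymbol{\beta}}(1-p_t^{\boldsymbol{\beta}})$.

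No step poses real difficulty: the only mildly delicate point is invoking the strict positivity of the Schur complement to justify dividing by $\sqrt{\boldsymbol{\beta}_{-u}^\top\boldsymbol{\Sigma}_{-u|u}\boldsymbol{\beta}_{-u}}$ in the nondegenerate case, and keeping track of the block decomposition notation $\boldsymbol{\Sigma}_{u_1,u_2}=(\Sigma_{i,j})_{i\in u_1,j\in u_2}$ consistently. The resulting closed-form values can then be combined with the theoretical failure probability from \eqref{proba_theo} and the Shapley aggregation formula \eqref{shapley} to produce reference values of the target Shapley effects, exactly as used in \Cref{sec:linear_gaussian}.
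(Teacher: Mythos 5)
Your proposal is correct and follows essentially the same route as the paper's proof: reduce $\mathbb{E}(\psi_t(\mathbf{X})\mid\mathbf{X}_u)$ to a conditional tail probability of the scalar Gaussian $\boldsymbol{\beta}_{-u}^\top\mathbf{X}_{-u}\mid\mathbf{X}_u$ via the Schur-complement conditioning formula and \Cref{com_lin_gauss}, then split on whether $\boldsymbol{\beta}_{-u}=0$. Your explicit remark that positive-definiteness of $\mathbf{\Sigma}$ guarantees $\boldsymbol{\beta}_{-u}^\top\left(\mathbf{\Sigma}_{-u,-u}-\mathbf{\Sigma}_{-u,u}\mathbf{\Sigma}_{u,u}^{-1}\mathbf{\Sigma}_{u,-u}\right)\boldsymbol{\beta}_{-u}>0$ is a small point the paper leaves implicit, but otherwise the two arguments coincide.
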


\begin{proof}For any subset $u\in\mathcal{P}(d)\backslash\lbrace\varnothing,[\![1,d]\!]\rbrace$, let us compute the theoretical value of $\text{T-VE}_u$ in the Gaussian linear framework.

%\subsubsection{Case 1: ${\boldsymbol{\beta}}_{-u}\neq 0$}

\textbullet \quad Case 1: ${\boldsymbol{\beta}}_{-u}\neq 0$ \\ In that case, we have: \begin{align*}
\mathbb{E}\left(\mathbf{1}\left(\phi_{\boldsymbol{\beta}}\left(\mathbf{X} \right)>t\right)  | \mathbf{X}_u = \mathbf{x}_u \right) &= \mathbb{P}\left(\phi_{\boldsymbol{\beta}}\left(\mathbf{X} \right)>t | \mathbf{X}_u = \mathbf{x}_u\right)  \\
&=\mathbb{P}\left({\boldsymbol{\beta}}^\top\mathbf{X}>t | \mathbf{X}_u = \mathbf{x}_u\right)\\
&= \mathbb{P}\left({\boldsymbol{\beta}}_{-u}^\top\mathbf{X}_{-u}>t-{\boldsymbol{\beta}}_{u}^\top\mathbf{x}_{u} | \mathbf{X}_u = \mathbf{x}_u\right).
\end{align*}
Then, recall that the conditional normal distribution satisfies: \begin{equation}\mathbf{X}_{-u} | \mathbf{X}_u = \mathbf{x}_u \sim \mathcal{N}_{|-u|}\left(\boldsymbol{\mu}_{-u} + \mathbf{\Sigma}_{-u,u}\mathbf{\Sigma}_{u,u}^{-1}\left(\mathbf{x}_u - \boldsymbol{\mu}_u \right),  \mathbf{\Sigma}_{-u,-u} - \mathbf{\Sigma}_{-u,u}\mathbf{\Sigma}_{u,u}^{-1}\mathbf{\Sigma}_{u,-u} \right).\end{equation}
Therefore, thanks to \Cref{com_lin_gauss}, the conditional distribution of ${\boldsymbol{\beta}}_{-u}^\top\mathbf{X}_{-u} | \mathbf{X}_u = \mathbf{x}_u$ is given by: \begin{multline}{\boldsymbol{\beta}}_{-u}^\top\mathbf{X}_{-u} | \mathbf{X}_u = \mathbf{x}_u \sim \\ \mathcal{N}_1\left[{\boldsymbol{\beta}}_{-u}^\top\left(\boldsymbol{\mu}_{-u} + \mathbf{\Sigma}_{-u,u}\mathbf{\Sigma}_{u,u}^{-1}\left(\mathbf{x}_u - \boldsymbol{\mu}_u \right) \right), {\boldsymbol{\beta}}_{-u}^\top\left(\mathbf{\Sigma}_{-u,-u} - \mathbf{\Sigma}_{-u,u}\mathbf{\Sigma}_{u,u}^{-1}\mathbf{\Sigma}_{u,-u} \right)  {\boldsymbol{\beta}}_{-u}  \right].\end{multline}
Finally, we have: 
\begin{equation}\mathbb{E}\left(\mathbf{1}\left(\phi_{\boldsymbol{\beta}}\left(\mathbf{X}\right)>t\right)  | \mathbf{X}_u = \mathbf{x}_u \right) = 1 - \Phi\left(\dfrac{t-{\boldsymbol{\beta}}_{u}^\top\mathbf{x}_{u} - {\boldsymbol{\beta}}_{-u}^\top\left(\boldsymbol{\mu}_{-u} + \mathbf{\Sigma}_{-u,u}\mathbf{\Sigma}_{u,u}^{-1}\left(\mathbf{x}_u - \boldsymbol{\mu}_u \right) \right)}{\sqrt{{\boldsymbol{\beta}}_{-u}^\top\left(\mathbf{\Sigma}_{-u,-u} - \mathbf{\Sigma}_{-u,u}\mathbf{\Sigma}_{u,u}^{-1}\mathbf{\Sigma}_{u,-u} \right)  {\boldsymbol{\beta}}_{-u}}} \right),\end{equation} an so: \begin{equation}
    \text{T-VE}_u = \mathbb{V}\left[\Phi\left(\dfrac{t-{\boldsymbol{\beta}}_{u}^\top\mathbf{X}_{u} - {\boldsymbol{\beta}}_{-u}^\top\left(\boldsymbol{\mu}_{-u} + \mathbf{\Sigma}_{-u,u}\mathbf{\Sigma}_{u,u}^{-1}\left(\mathbf{X}_u - \boldsymbol{\mu}_u \right) \right)}{\sqrt{{\boldsymbol{\beta}}_{-u}^\top\left(\mathbf{\Sigma}_{-u,-u} - \mathbf{\Sigma}_{-u,u}\mathbf{\Sigma}_{u,u}^{-1}\mathbf{\Sigma}_{u,-u} \right)  {\boldsymbol{\beta}}_{-u}}} \right) \right].
\end{equation}

%\subsubsection{Case 2: ${\boldsymbol{\beta}}_{-u} = 0$}

\textbullet \quad Case 2: ${\boldsymbol{\beta}}_{-u} = 0$ \\ In that case, the linear function $\phi_{\boldsymbol{\beta}}$ depends only on $\mathbf{x}_u$. Then: \begin{equation} \mathbb{E}\left(\mathbf{1}\left(\phi_{\boldsymbol{\beta}}(\mathbf{X})>t\right)  | \mathbf{X}_u \right) = \mathbf{1}\left(\phi_{\boldsymbol{\beta}}(\mathbf{X})>t\right),\end{equation} and finally: \begin{equation}\text{T-VE}_u = \mathbb{V}\left[\mathbb{E}\left(\mathbf{1}\left(\phi_{\boldsymbol{\beta}}(\mathbf{X})>t\right)  | \mathbf{X}_u \right)  \right] = \mathbb{V}\left(\mathbf{1}\left(\phi_{\boldsymbol{\beta}}(\mathbf{X})>t\right) \right) = p_t^{\boldsymbol{\beta}}\left(1-p_t^{\boldsymbol{\beta}}\right).\end{equation}

To sum up, we have just proved the required result in \eqref{soboltheo} .
\end{proof}

\section{Preprocessing procedure for the given-data estimators}
\label{app:standardisation}

\subsection{Presentation of the procedure}
\label{ss:preprocessing}
The preprocessing procedure presented below will be applied as soon as a given-data estimator will be used, and it is based on the following theorem stated by \cite{owen2017shapley}. \begin{theorem}\label{invariance_bij}
Consider a family of uni-dimensional bijective transformations $\left(\tau_i\right)_{i\in[\![1,d]\!]}$. Let us define the following function: \begin{equation}
    \begin{array}{l|rcl}
\widetilde{\phi} : & \bigotimes_{i=1}^d \tau_i\left(\mathbb{X}_i\right) & \longrightarrow & \mathbb{R} \\
    & \mathbf{z} & \longmapsto & \phi\left(\tau_1^{-1}\left(z_1\right),\dots,\tau_d^{-1}\left(z_d\right)\right),
    \end{array} 
\end{equation} its random input vector $\mathbf{Z} = \left(\tau_i\left(X_i\right)\right)_{i\in[\![1,d]\!]}$ and let us write $\left(\widetilde{\text{Sh}}_i\right)_{i\in[\![1,d]\!]}$ its Shapley effects. Recalling that $\left(\text{Sh}_i\right)_{i\in[\![1,d]\!]}$ are the Shapley effects of $\phi$, then: \begin{equation}
    \forall i \in [\![1,d]\!], \ \text{Sh}_i = \widetilde{\text{Sh}}_i.
\end{equation}
\end{theorem}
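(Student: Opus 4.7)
My plan is to reduce Theorem \ref{invariance_bij} to the invariance of each closed Sobol cost $\text{VE}_u$ under componentwise bijective reparametrization, and then invoke the definition \eqref{shapley}. The key observation is that the composition that defines $\widetilde{\phi}$ has been chosen precisely so that, as random variables, $\widetilde{\phi}(\mathbf{Z}) = \phi(\tau_1^{-1}(Z_1), \dots, \tau_d^{-1}(Z_d)) = \phi(X_1, \dots, X_d) = \phi(\mathbf{X})$ almost surely. In particular, $\mathbb{V}(\widetilde{\phi}(\mathbf{Z})) = \mathbb{V}(\phi(\mathbf{X}))$, so the normalizing constant appearing in \eqref{shapley} is identical for $\phi$ and $\widetilde{\phi}$.

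The main step is then to show that for every non-empty subset $u \in \mathcal{P}(d)$,
\begin{equation*}
\widetilde{\text{VE}}_u \;=\; \mathbb{V}\!\left[\mathbb{E}\!\left(\widetilde{\phi}(\mathbf{Z})\,\big|\,\mathbf{Z}_u\right)\right] \;=\; \mathbb{V}\!\left[\mathbb{E}\!\left(\phi(\mathbf{X})\,\big|\,\mathbf{X}_u\right)\right] \;=\; \text{VE}_u .
\end{equation*}
This reduces to verifying that the sigma-algebras $\sigma(\mathbf{X}_u)$ and $\sigma(\mathbf{Z}_u)$ coincide. Since each $\tau_i$ is a (bi-measurable) bijection between $\mathbb{X}_i$ and $\tau_i(\mathbb{X}_i)$, the vector $\mathbf{Z}_u = (\tau_i(X_i))_{i \in u}$ is a bijective transformation of $\mathbf{X}_u$ with measurable inverse, hence $\sigma(\mathbf{Z}_u) = \sigma(\mathbf{X}_u)$. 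Combining this with $\widetilde{\phi}(\mathbf{Z}) = \phi(\mathbf{X})$ a.s.\ gives $\mathbb{E}(\widetilde{\phi}(\mathbf{Z}) \mid \mathbf{Z}_u) = \mathbb{E}(\phi(\mathbf{X}) \mid \mathbf{X}_u)$ almost surely, and taking variances yields $\widetilde{\text{VE}}_u = \text{VE}_u$.

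To conclude, I would plug the equalities $\widetilde{\text{VE}}_u = \text{VE}_u$ for all $u \in \mathcal{P}(d)\setminus\{\varnothing,[\![1,d]\!]\}$, together with the boundary identities $\widetilde{\text{VE}}_\varnothing = \text{VE}_\varnothing = 0$ and $\widetilde{\text{VE}}_{[\![1,d]\!]} = \text{VE}_{[\![1,d]\!]} = \mathbb{V}(\phi(\mathbf{X}))$ (see \Cref{rem:val_closed_sobol}), into the definition \eqref{shapley} of the Shapley effects applied to $\widetilde{\phi}$ and $\mathbf{Z}$. Since every term in the resulting sum matches the corresponding term for $\phi$ and $\mathbf{X}$, we obtain $\widetilde{\text{Sh}}_i = \text{Sh}_i$ for all $i \in [\![1,d]\!]$.

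There is not really a technical obstacle here, as the result is essentially a tautology once sigma-algebras are identified; the only subtlety worth spelling out is the (implicit) measurability of the inverse maps $\tau_i^{-1}$, which is automatic in the standard setting where each $\tau_i$ is a continuous bijection between Borel subsets of $\mathbb{R}$ (and in particular a Borel isomorphism). This justifies both the equality $\sigma(\mathbf{X}_u) = \sigma(\mathbf{Z}_u)$ and the well-posedness of $\widetilde{\phi}$.
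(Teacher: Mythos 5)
Your proof is correct. Note that the paper does not actually supply a proof of this statement: Theorem \ref{invariance_bij} is quoted from the cited reference of Owen and Prieur, whose argument is exactly the one you give, namely that $\widetilde{\phi}(\mathbf{Z})=\phi(\mathbf{X})$ almost surely by construction and that $\sigma(\mathbf{Z}_u)=\sigma(\mathbf{X}_u)$ for every $u$, so each conditional index $\text{VE}_u$ (and the normalizing variance) is unchanged and the conclusion follows from the definition \eqref{shapley}. Your remark on the measurability of the inverses $\tau_i^{-1}$ is the right caveat to flag; it is the only hypothesis implicitly needed beyond bijectivity, and it holds automatically for Borel-measurable bijections between Borel subsets of $\mathbb{R}$.
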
 In other words, this theorem shows that the Shapley effects are unchanged when bijective transformations are applied on each input variable. Practically, the preprocessing consists in applying the following procedure: \begin{enumerate}
    \item choose a sampling distribution $h\in\lbrace f_{\mathbf{X}},g\rbrace$
    \item if it is possible, for all $i\in[\![1,d]\!]$, compute the exact values of $\mu^{(h)}_i = \mathbb{E}_h\left(X_i\right)$ and $\left(\sigma_i^{(h)}\right)^2 = \mathbb{V}_h\left(X_i\right)$, else estimate them
    \item for all $i\in[\![1,d]\!]$, define the linear bijective transformations by:\begin{equation}\label{bij_trans}
    \forall x_i\in\mathbb{R}, \ \tau_i\left(x_i\right) = \dfrac{x_i - \mu^{(h)}_i}{\sigma_i^{(h)}}
\end{equation}
    \item from a sample $\left(\mathbf{X}^{(n)}\right)_{n\in[\![1,N]\!]}$ drawn according to $h$, build the transformed sample $\left(\mathbf{Z}^{(n)}\right)_{n\in[\![1,N]\!]}$ defined for all $n\in[\![1,N]\!]$ by $\mathbf{Z}^{(n)} = \left(\tau_i\left(X_i^{(n)}\right)\right)_{i\in[\![1,d]\!]}$
    \item apply the previous given-data estimators proposed in this article to the new sample $\left(\mathbf{Z}^{(n)}\right)_{n\in[\![1,N]\!]}$.
\end{enumerate} Hence, \Cref{invariance_bij} applied to $\psi_t : \mathbf{x} \in \mathbb{X}\mapsto\mathbf{1}\left(\phi\left(\mathbf{x}\right)>t\right)$ with the family of linear bijective transformations $\left(\tau_i\right)_{i\in[\![1,d]\!]}$ defined in \eqref{bij_trans} justifies that the described preprocessing procedure should theoretically provide the expected target Shapley effects. Moreover, the transformations $\left(\tau_i\right)_{i\in[\![1,d]\!]}$ defined in \eqref{bij_trans} only consist here in a standardisation of the input sample $\left(\mathbf{X}^{(n)}\right)_{n\in[\![1,N]\!]}$ drawn according to $h\in\lbrace f_{\mathbf{X}},g\rbrace$, i.e. a re-scaling by $\sqrt{\mathbb{V}_h\left(X_i\right)}$ and a shifting by $\mathbb{E}_h\left(X_i\right)$ of each component of each point. In particular, the nearest-neighbour search is performed among the new sample $\left(\mathbf{Z}^{(n)}\right)_{n\in[\![1,N]\!]}$ in which distances between points are expected to be more homogeneous.

\subsection{Motivations and practical interest}

In order to motivate the introduction of the preprocessing procedure presented in \ref{ss:preprocessing}, let us reconsider the cantilever beam example of \Cref{ss:cantilever_beam}. If we estimate the target Shapley effects of this problem without applying the preprocessing procedure, we obtain the results presented in \Cref{fig:cantilever_beam}. The existing given-data estimators without importance sampling, especially those of the second, fourth and sixth indices, seem to be badly biased. This phenomenon can perhaps be explained by the huge scale difference between the third variable, the elastic modulus $E$, and the others. When $3$ is in a subset $u\in \mathcal{P}(6)\backslash\lbrace\varnothing,[\![1,6]\!]\rbrace$, the distance in the subspace $\mathbb{X}_u$ is approximately equal to the distance in $\mathbb{X}_{\left\lbrace 3 \right\rbrace}$, which makes the nearest neighbour approximation of a conditional distribution given some $\mathbf{x}_u \in \mathbb{X}_u$ very inaccurate. This phenomenon is getting worse without importance sampling because the points of interest, the failure points, are in the tail of the distribution, where the concentration of points is small and thus where the distances between points are even larger. 

Consequently, the preprocessing procedure described in \ref{ss:preprocessing} aims to restructure the available sample such that each component has the same scale in order to decrease the error due to the nearest neighbour approximation. By definition, for a given $u\in \mathcal{P}(d)\backslash\lbrace\varnothing,[\![1,d]\!]\rbrace$, this error mainly comes from the gap between $\psi_t\left(\mathbf{X}^{(n)}_{u},\mathbf{X}^{(k_N^u(n,2))}_{-u}\right)$, the target value on the subspace $\left\lbrace \mathbf{x} \in \mathbb{R}^d / \mathbf{x}_u = \mathbf{X}^{(n)}_{u}\right\rbrace$, and $\psi_t\left(\mathbf{X}^{(k_N^u(n,2))}\right)$ which is its approximation, for all $n\in[\![1,N_u]\!]$. The restructuring caused by the linear bijective transformations defined in \eqref{bij_trans} aims then at reducing this error. Indeed, the nearest neighbour of some points of the new sample $\left(\mathbf{Z}^{(n)}\right)_{n\in[\![1,N]\!]}$ might not be the same as in the starting sample $\left(\mathbf{X}^{(n)}\right)_{n\in[\![1,N]\!]}$. \Cref{fig:bij_trans} illustrates this phenomenon. More precisely, for all $u\in \mathcal{P}(d)\backslash\lbrace\varnothing,[\![1,d]\!]\rbrace$, there can exist some $n_0 \in [\![1,N]\!]$ such that $k_{\mathbf{x}}^u(n_0,2) \neq k_{\mathbf{z}}^u(n_0,2)$, where $k_{\mathbf{x}}^u(n_0,2)$ and $k_{\mathbf{z}}^u(n_0,2)$ represent the indices of the second nearest neighbour of the point $n_0$ respectively in the $\mathbf{x}$-space and in the $\mathbf{z}$-space, and thus $\psi_t\left(\mathbf{X}^{(k_{\mathbf{x}}^u(n_0,2))}\right) \neq \widetilde{\psi_t}\left(\mathbf{Z}^{(k_{\mathbf{z}}^u(n_0,2))}\right)$. At last, we expect that most of the time, this change leads to a reduction of the error, i.e. $\left|\widetilde{\psi_t}\left(\mathbf{Z}^{(k_{\mathbf{z}}^u(n_0,2))}\right) - \widetilde{\psi_t}\left(\mathbf{Z}^{(n_0)}_{u},\mathbf{Z}^{(k_{\mathbf{z}}^u(n_0,2))}_{-u}\right)\right| \leq \left|\psi_t\left(\mathbf{X}^{(k_{\mathbf{x}}^u(n_0,2))}\right) - \psi_t\left(\mathbf{X}^{(n_0)}_{u},\mathbf{X}^{(k_{\mathbf{x}}^u(n_0,2))}_{-u}\right)\right|$. A more advanced theoretical study is required to better understand this preprocessing procedure and potentially to improve it, but it seems beneficial in our examples, especially on the cantilever beam example. At last, note that in \Cref{sec:linear_gaussian}, we apply the preprocessing procedure with the given-data estimators. However, we would obtain almost exactly the same results without having applied it because in those examples, the transformations have a very mild impact.

\begin{figure}
    \centering
    \includegraphics[width=.6\textwidth]{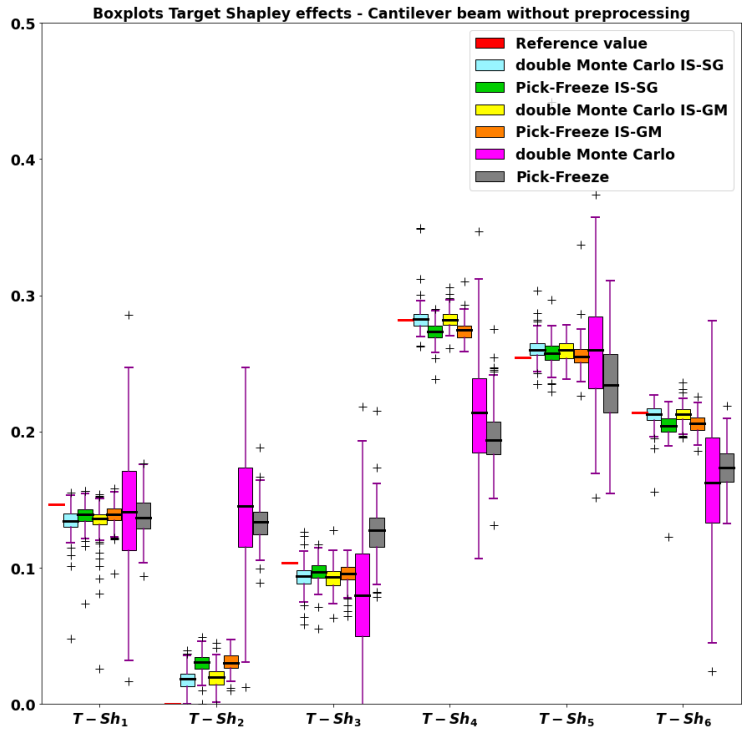}
    \caption{Estimation of the target Shapley effects in the cantilever beam example, in the given-data framework and without the preprocessing described in \ref{ss:preprocessing}.}
    \label{fig:cantilever_beam}
\end{figure}

\begin{figure}
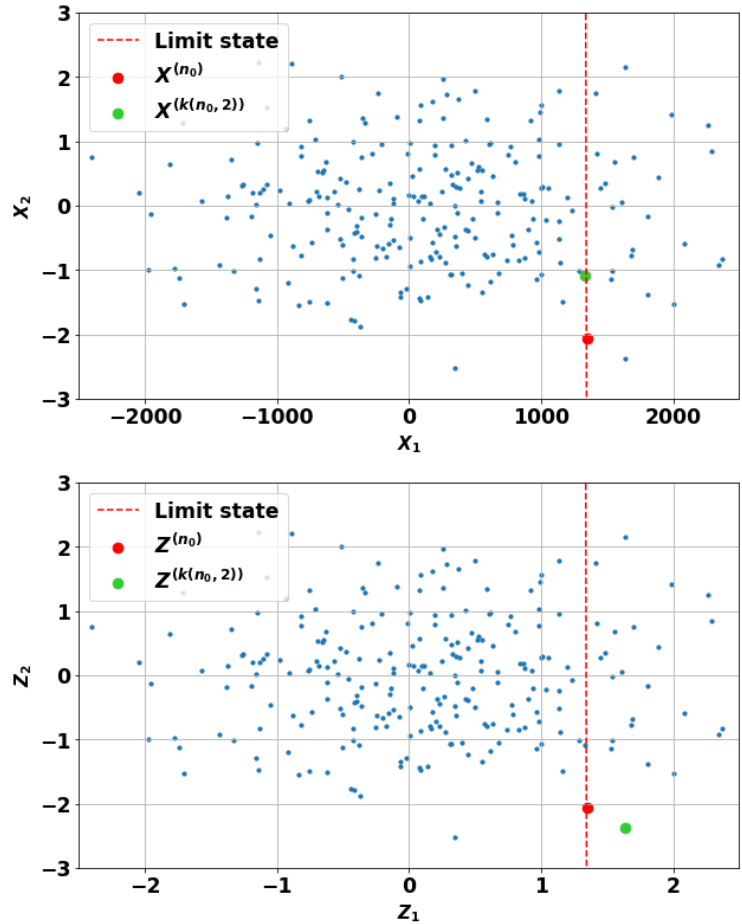

\centering
\includegraphics[width=.6\textwidth]{Images/X_sample.png}
\includegraphics[width=.6\textwidth]{Images/Z_sample.png}
\caption{The upper figure represents in the $\mathbf{x}$-space a sample of $N=200$ points drawn according to a zero-mean bi-dimensional normal distribution with independent components and such that $\mathbb{V}\left(X_1\right) = 10^9$ and $\mathbb{V}\left(X_2\right) = 1$. The lower figure represents in the $\mathbf{z}$-space the same sample after having applied on each component the transformations defined in \eqref{bij_trans}. On both figures, the dotted red line represents the limit state of the linear function $\phi_{\left( 1 \ 1 \right)}\left(x_1,x_2\right) = x_1 + x_2$ defined by the failure threshold $t = 1350$, the red point represents the point $n_0\in[\![1,200]\!]$ and the green point represents its nearest neighbour among the corresponding sample with the $2$-dimensional distance. In both cases, the red point is in the failure domain. Moreover, in the $\mathbf{x}$-space (upper figure), the green point is in the safe domain whereas in the $\mathbf{z}$-space (lower figure), the green point is in the failure domain.}
\label{fig:bij_trans}
\end{figure}

%\FloatBarrier

\section{Equations of the fire spread model in Section 4.3}
\label{app:fire_spread_eqs}

Given the random input vector $\mathbf{X} = \left(\delta,\sigma,h,\rho_p,m_l,m_d,S_T,U,\mathrm{tan} \ \varphi,P\right)$, the rate of spread is obtained through the following system of equations: 

$$\begin{array}{llr}
    &R =\dfrac{I_R\xi\left(1+\phi_W+\phi_S\right)}{\rho_b\epsilon Q_{ig}}&\text{rate of fire-spread, ft}\cdot\text{min}^{-1}\\
    &  \hspace{-.5cm}\text{where} \notag \\
    &w_0 = \dfrac{4.8}{4.8824} \times \dfrac{1}{1+\exp\left[(15-\delta)/3.5\right]}&\text{fuel loading, kg}\cdot\text{m}^{-2} \\
    &\Gamma_{\text{max}} = \dfrac{\sigma^{1.5}}{495+0.0594\sigma^{1.5}}&\text{maximum reaction velocity, min}^{-1}\\
    &\beta_{\text{op}} = 3.348\sigma^{-0.8189}&\text{optimum packing ratio}\\
    &A = 133\sigma^{-0.7913}\\
    &\theta^* = \dfrac{301.4 - 305.87(m_l-m_d)+2260m_d}{2260m_l}\\
    &\theta = \mathrm{min}\left(1,\mathrm{max}\left(0,\theta^*\right)\right)\\
    &\mu_M = \exp\left[-7.3Pm_d - (7.3\theta+2.13)(1-P)m_l\right]&\text{moisture damping coefficient}\\
    &\mu_S = 0.174S_T^{-0.19}&\text{mineral damping coefficient}\\
    &C = 7.47\exp\left[-0.133\sigma^{0.55}\right]\\
    &B = 0.02526\sigma^{0.54}\\
    &E = 0.715\exp\left[-3.59\times10^{-4}\sigma\right]\\
    &w_n = w_0(1-S_T)&\text{net fuel loading, lb}\cdot\text{ft}^{-2}\\
    &\rho_b = \dfrac{w_0}{\delta}&\text{ovendry bulk density, lb}\cdot\text{ft}^{-3}\\
    &\epsilon = \exp\left[\dfrac{-138}{\sigma}\right]&\text{effective heating number}\\
    &Q_{ig} = 130.87 + 1054.43m_d&\text{heat of preignition, Btu}\cdot\text{lb}^{-1}\\
    &\beta = \dfrac{\rho_b}{\rho_p}&\text{packing ratio}\\
    &\Gamma = \Gamma_{\text{max}}\left(\dfrac{\beta}{\beta_{\text{op}}}\right)^A\exp\left[A\left(1-\dfrac{\beta}{\beta_{\text{op}}}\right)\right]&\text{optimum reaction velocity, min}^{-1}\\
    &\xi = \dfrac{\exp\left[\left(0.792+0.681\sigma^{0.5}\right)\left(\beta+0.1\right)\right]}{192+0.2595\sigma}&\text{propagating flux ratio}\\
    &\phi_W = CU^B\left(\dfrac{\beta}{\beta_{\text{op}}}\right)^{-E}&\text{wind coefficient}\\
    &\phi_S = 5.275\beta^{-0.3}\left(\mathrm{tan} \ \varphi\right)^2&\text{slope factor}\\
    &I_R = \Gamma w_n h \mu_M \mu_S&\text{reaction intensity, Btu}\cdot\text{ft}^{-2}\cdot\text{min}^{-1}.
\end{array}$$ Note that the expression of the fuel loading $w_0$ according to the fuel depth $\delta$ is conjectured from the data analysis performed in \cite{salvador2001global}. In addition, it is important to remark that almost all the above equations, which mainly come from \cite{rothermel1972mathematical}, are given in imperial units whereas the inputs are specified in metric units in \Cref{table:distr_fire_spread}. In order to have consistent results, it is thus necessary to convert the input variables into the imperial units at the beginning of the numerical calculus and to convert the output into $\mathrm{cm\cdot s}^{-1}$ at the end.

\section{Cost-reduction estimation procedure in the given-model framework}
\label{sss:3.3.2}

In practice, the ROSA of a complex system always comes after the reliability analysis, i.e. the estimation of the failure probability. The given-model estimators \eqref{tevngd} and \eqref{tvengd} of the target conditional indices and thus the corresponding target Shapley effect estimators have a high computational cost, and the reliability analysis provides an i.i.d. input/output $N$-sample $\left(\mathbf{X}^{(n)},\psi_t\left(\mathbf{X}^{(n)}\right)\right)_{n\in[\![1,N]\!]}$ with $\left(\mathbf{X}^{(n)}\right)_{n\in[\![1,N]\!]}$ distributed according to the IS auxiliary density $g$. We present here a new procedure which re-uses the available sample in order to reduce the computational cost of the previous given-model estimators of the target conditional indices $\text{T-VE}_u$ and $\text{T-EV}_u$ and thus of the target Shapley effects. Remark first that estimators by importance sampling of $p_t$ and $\mathbb{V}_{f_{\mathbf{X}}}\left(\mathbf{1}\left(\phi\left(\mathbf{X}\right)>t\right)\right)$ can be easily computed with only the available sample and so do not require additional calls to $\phi$.

\subsection{Double Monte Carlo procedure}

With the double Monte Carlo method in \eqref{tevngd}, for $u\in\mathcal{P}(d)\backslash\lbrace\varnothing,[\![1,d]\!]\rbrace$, set the parameters $N_u$ and $N_I$ and consider a sequence $\left(s(n)\right)_{n\in[\![1,N_u]\!]}$ of uniformly distributed integers in $[\![1,N]\!]$ and apply the following scheme: \begin{enumerate}
    \item for $n\in[\![1,N_u]\!]$, draw an i.i.d. sample $\left(\widetilde{\mathbf{X}}_{u}^{(s(n),2)},\dots,\widetilde{\mathbf{X}}_{u}^{(s(n),N_I)}\right)$ distributed according to the conditional distribution of $g_{\mathbf{X}_{u}|\mathbf{X}_{-u} = \mathbf{X}_{-u}^{(s(n))}}$
    \item for $k\in[\![2,N_I]\!]$, compute $w_t^g\left(\widetilde{\mathbf{X}}_u^{(s(n),k)},\mathbf{X}_{-u}^{(s(n))}\right)$ and use the value of $w_t^g\left(\mathbf{X}^{(s(n))}\right)$ for the term corresponding to $k=1$
    \item compute the estimators \eqref{bias_est} and then \eqref{tevngd}.
\end{enumerate} This procedure requires $N_u\left(N_I-1\right)$ additional calls to $\phi$ to those from the reliability analysis to estimate the target conditional index $\text{T-EV}_u$ by double Monte Carlo with importance sampling. Typically, the authors of \cite{song2016shapley} suggest to use $N_I = 3$ for numerical purposes, thus the proposed procedure is interesting because it does not require too many additional calls to the code $\phi$.

\subsection{Pick-Freeze procedure}

With the Pick-Freeze method in \eqref{tvengd}, for any subset $u\in\mathcal{P}(d)\backslash\lbrace\varnothing,[\![1,d]\!]\rbrace$, set the parameter $N_u$, consider a sequence $\left(s(n)\right)_{n\in[\![1,N_u]\!]}$ of uniformly distributed integers in $[\![1,N]\!]$ and apply the following scheme: \begin{enumerate}
    \item for $n\in[\![1,N_u]\!]$, draw a random variable $\widetilde{\mathbf{X}}_{-u}^{(s(n),2)}$ from the conditional distribution of $g_{\mathbf{X}_{-u}|\mathbf{X}_{u} = \mathbf{X}_{u}^{(s(n))}}$
    \item compute $w_t^g\left(\mathbf{X}_u^{(s(n))},\widetilde{\mathbf{X}}_{-u}^{(s(n),2)}\right)$ and use the value of $w_t^g\left(\mathbf{X}^{(s(n))}\right)$ for the first term in the Pick-Freeze product
    \item compute the estimator \eqref{tvengd}.
\end{enumerate} This procedure requires $N_u$ additional calls to $\phi$ to those from the reliability analysis to estimate the target conditional index $\text{T-VE}_u$ by Pick-Freeze with importance sampling. 

\subsection{Cost reduction provided}
In the above cost-reduction procedure with the double Monte Carlo (resp. Pick-Freeze) method, the sampling from the marginal distribution $g_{\mathbf{X}_{-u}}$ (resp. $g_{\mathbf{X}_{u}}$) is replaced by picking a random sub-sample among the sample $\left(\mathbf{X}_{-u}^{(n)}\right)_{n\in[\![1,N]\!]}$ (resp. $\left(\mathbf{X}_{u}^{(n)}\right)_{n\in[\![1,N]\!]}$) through the random sequence $\left(s(n)\right)_{n\in[\![1,N_u]\!]}$. Then, for any $n\in[\![1,N_u]\!]$, a sample of size $N_I-1$ (resp. $1$) is drawn according to the conditional distribution $g_{\mathbf{X}_{u}|\mathbf{X}_{-u} = \mathbf{X}_{-u}^{(s(n))}}$ (resp. $g_{\mathbf{X}_{-u}|\mathbf{X}_{u} = \mathbf{X}_{u}^{(s(n))}}$) and the missing point is set to $\mathbf{X}_u^{(s(n))}$ (resp. $\mathbf{X}_{-u}^{(s(n))}$) such that we obtain an i.i.d. sample of size $N_I$ (resp. 2) distributed according to the corresponding conditional distribution. The latter missing point belongs to the available sample and does not require to be evaluated and so allows to save one call to $\phi$. Eventually, given the data from the reliability analysis, this new procedure allows to save $N_u$ calls to $\phi$ to estimate each target conditional index and so allows to save $N_{\mathbb{V}} + m(d-1)N_O$ calls to $\phi$ to estimate the $d$ target Shapley effects with the random permutation aggregation procedure and $N_{\mathbb{V}} + (2^d-2)N_O$ calls with the subset aggregation procedure, where $N_{\mathbb{V}}$ and $N_O$ are defined in \Cref{ss:aggreg}.

%% References without bibTeX database:

%\begin{thebibliography}{-8}

%% \bibitem must have the following form:

%\small{
%\bibitem{key}

%...

%}

%\end{thebibliography}

%% References with bibTeX database:

\bibliographystyle{IJ4UQ_Bibliography_Style}
\bibliography{References}
\end{document}